\documentclass[12pt]{imsart}

\usepackage{iftex}

\ifPDFTeX 
  \usepackage[utf8]{inputenc}
  \usepackage[T1]{fontenc}
  \usepackage{lmodern}
\fi

\usepackage[margin=1in]{geometry}
\usepackage{graphicx,xcolor,mathrsfs,bbm}

\usepackage[toc]{appendix}
\RequirePackage[numbers]{natbib}
\usepackage{amsmath,amssymb,amsthm,bm,latexsym,mathtools}
\usepackage{amsfonts}
\usepackage{dsfont}
\usepackage{orcidlink}
\usepackage{comment}
\usepackage{enumitem}
\usepackage{url}
\usepackage{booktabs}
\usepackage{hyperref}
\hypersetup{colorlinks=true,citecolor=blue,linktocpage=true}
\usepackage{cleveref}

\startlocaldefs

\makeatletter
\newcommand{\namedlabel}[2]{\begingroup #2\def\@currentlabel{\textnormal{#2}}%
  \phantomsection\label{#1}\endgroup}
\makeatother
  
\setlist[enumerate,1]{label=(\roman*)}
\setlist[enumerate,2]{label=(\alph*)}

\def\E{\mathbb{E}}
\def\P{\mathbb{P}}

\def\R{\mathbb{R}}
\def\C{\mathbb{C}}

\newcommand\EE{\mathbb{E}}
\newcommand\Indic[1]{\Ind_{\{#1\}}}
\newcommand\Ind{\mathbf{1}}
\newcommand\kf{\mathtt{k}}

\DeclareMathOperator*{\arginf}{arg\,inf}
\DeclareMathOperator{\supp}{supp}

\AtBeginDocument{\let\Re\tRe}

\newtheorem{dummy}{***}[section]
\newtheorem{theorem}[dummy]{Theorem}

\newtheorem{definition}[dummy]{Definition}

\newtheorem{lemma}[dummy]{Lemma}

\newtheorem{proposition}[dummy]{Proposition}

\AddToHook{env/theorem/begin}{\crefalias{dummy}{theorem}}
\AddToHook{env/example/begin}{\crefalias{dummy}{example}}
\AddToHook{env/corollary/begin}{\crefalias{dummy}{corollary}}
\AddToHook{env/definition/begin}{\crefalias{dummy}{definition}}
\AddToHook{env/assumption/begin}{\crefalias{dummy}{assumption}}
\AddToHook{env/lemma/begin}{\crefalias{dummy}{lemma}}
\AddToHook{env/remark/begin}{\crefalias{dummy}{remark}}
\AddToHook{env/proposition/begin}{\crefalias{dummy}{proposition}}
\crefname{theorem}{Theorem}{Theorems}
\crefname{example}{Example}{Examples}
\crefname{corollary}{Corollary}{Corollaries}
\crefname{definition}{Definition}{Definitions}
\crefname{assumption}{Assumption}{Assumptions}
\crefname{lemma}{Lemma}{Lemmas}
\crefname{remark}{Remark}{Remarks}
\crefname{proposition}{Proposition}{Propositions}

\numberwithin{equation}{section}  

\endlocaldefs

\begin{document}

\begin{frontmatter}
\title{Long-term behaviour of refracted Lévy processes in a half-line}
\makeatletter\runtitle{\@title}\makeatother

\author[UCL]{\fnms{Ramkrishna Jyoti} \snm{Samanta}\ead[label=e1]{ramkrishna.samanta.24@ucl.ac.uk}}
\author[UCL]{\fnms{Alexander R.} \snm{Watson}\ead[label=e2]{alex.watson@ucl.ac.uk}}
\runauthor{Samanta and Watson}

\address[UCL]{Department of Statistical Science, University College London, 
Gower Street, London WC1E 6BT, UK}
\begin{abstract}
  We study the long-term behaviour of refracted Lévy processes within a half-line,
  with killing on exiting the domain and at state-dependent rate inside it.
  We identify the decay rate of the killed semigroup together with the
  associated invariant function and measure, and we obtain convergence at
  exponential rate to a Yaglom limit.
  The proofs make use of $R$-theory and Lyapunov function techniques.
\end{abstract}
\end{frontmatter}

% \tableofcontents
\section{Introduction}\label{sec:introduction}

Consider a Markov process $V$ on state space $E\cup\{\partial\}$, in which
$\partial$ is an absorbing state. Denote by $\tau_\partial$ the time at which
$V$ reaches $\partial$. Fundamental to the study of $V$ are its
\emph{quasi-limiting distributions}, those probability measures $\nu$ on $E$
such that
\[
  \P_{\mu}(V_t \in \cdot \mid \tau_\partial > t)
  \to \nu,
\]
for some initial distribution $\mu$ of $V_0$,
and with a suitable notion of convergence.
When this convergence holds for all Dirac measures $\delta_x$, $x\in E$,
we say that $\nu$ is a \emph{Yaglom limit}.
Any quasi-limiting distribution is also \emph{quasi-stationary}, which means
that
\[
  \P_{\nu}(V_t \in \cdot \mid \tau_\partial > t)
  = \nu.
\]
Conversely, a quasi-stationary distribution $\nu$ is always quasi-limiting
when the initial distribution is $\nu$, but may not be for other distributions,
and in particular need not be a Yaglom limit.

If we study the sub-probability measures $\P_\mu(\cdot; \tau_\partial>t)$
we may see $V$ as a process in $E$ killed at time $\tau_\partial$; from
our point of view, absorption and killing are equivalent.

The study of quasi-limiting distributions, including their existence, 
uniqueness and the rate at which convergence occurs, has a long history.
The textbook of \citet{ColletMartinezSanMartin2013} gives a comprehensive
overview, including applications to birth-death processes and absorbed
diffusions. The work of \citet{ChampagnatVillemonais2023GeneralCriteria}
provides useful Lyapunov-type criteria for convergence, and there is also a
strong connection with $R$-theory \cite{TuominenTweedie1979}.

Perhaps surprisingly, quasi-limiting distributions are useful
not only in absorption situations, but also for branching processes, which model
a collection of individuals, each with a trait evolving in time,
which give birth (or branch) at random, resulting in a growing
population.
Roughly speaking, a non-conservative semigroup, which captures
the number of individuals of a certain trait at each time, can
be linked to a Markov process with killing. In turn, its quasi-limiting
behaviour describes the long-term behaviour
of the branching process, including the growth in population size.
This connection is made in the context
of growth-fragmentations by \citet{VillemonaisWatson2025GrowthFragmentationQSD};
see also \citet{Cavalli2020} for a model closely related to the present work.

In this work, we are interested in the quasi-limiting behaviour of a
refracted Lévy process killed on exiting a half-line $(b,\infty)$.
Let us begin by defining the object of study.

A \emph{Lévy process} is a stochastic process $X$ with stationary, independent increments
and càdlàg paths. $X$ is called \emph{spectrally negative} if it has no positive jumps
and does not have almost surely increasing paths. The distribution of $X$ can be
characterised using its \emph{Laplace exponent} $\psi_X\colon\R \to \R\cup\{\infty\}$,
\[
  \EE_0[e^{\lambda X_t}] = e^{t\psi_X(\lambda)},
  \qquad
  \lambda \in \R,
\]
where $\P_x$ and $\EE_x$ represent probabilities and expectations with the
process having initial value $x\in\R$.

The function $\psi_X$ is convex, and is finite at least on $[0,\infty)$.
Where it is finite, it is smooth and has the \emph{Lévy-Khintchine}
representation
\[
  \psi_X(\lambda) = \gamma_X \lambda
  + \frac{\sigma_X^2}{2}\lambda^2
  + \int_{(-\infty,0)} \bigl( e^{\lambda x} - 1 - \lambda x \Indic{x>-1} \bigr) \Pi_X(d x).
\]
Here, $\gamma_X\in \R$ is the \emph{centre}, $\sigma_X\ge 0$ is the \emph{Gaussian coefficient}
and $\Pi_X$, a measure on $(-\infty,0)$, is called the \emph{Lévy measure}.
We will have reason to make use of the inverse of $\psi_X$, the function
$\Phi_X \colon\R \to \R\cup\{-\infty\}$ given by
\[
  \Phi_X(q) = \sup\{ \lambda \in \R : \psi_X(\lambda) = q \},
  \qquad q\in \R,
\]
which is concave.
For further details on Lévy processes, we refer to
\cite{Ber-Levy,Kyprianou2014}.

The class of \emph{refracted Lévy processes} was described by \citet{KyprianouLoeffen2010}
and \citet{NobaYano2016} (see below for remarks on terminology).
Let $X$ and $Y$ be two spectrally negative Lévy processes, and
assume that $X$ has
zero Gaussian coefficient. The
\emph{refracted Lévy process based on $(X,Y)$}
is a Feller process $U$ which behaves like $X$ when above zero and
like $Y$ when below zero. More precisely, when $U$ is started above
zero, it behaves like $X$ until passing into the negative half-line, and then
behaves like $Y$, and when started below zero, it behaves like $Y$
until passing into the positive half-line. The need for a rigorous
construction becomes obvious when considering how the process should
behave when started from the point zero itself, and \citet{NobaYano2016}
provide this using Itô synthesis.

Historically, refracted Lévy processes were introduced by
\citet{KyprianouLoeffen2010}, who considered the case where
$Y_t = X_t - \delta t$ for some $\delta >0$,
and \citet{NobaYano2016}
subsequently called their process the `generalised refracted Lévy process'. For brevity,
and since we do not need to distinguish these two classes, 
we drop the word `generalised'.
Refracted Lévy processes arise naturally in actuarial science,
in the context of
optimal control problems with
absolutely continuous controls \cite{KLP-opt-control,NPY-refraction}
and `optimal new business' \cite{LoeffenMartinezVanSchaik2020}.

To the refracted process $U$, we introduce additional state-dependent killing
(sometimes called `omega-killing' in the literature).
First fix two killing rates $q_X, q_Y\ge 0$, and define
\[
  \kf(x) = q_X\Indic{x\ge 0} + q_Y\Indic{x<0},
  \qquad x\in\R.
\]
A killing time is introduced as
\[
  \zeta = \inf\biggl\{ t > 0: \int_0^t \kf(U_s)\, ds > \mathbf{e} \biggr\},
\]
where $\mathbf{e}\sim \text{Exp}(1)$ is independent of $U$. Enlarging the state
space with a cemetery state $\partial$, we define the stochastic process
$V = (V_t)_{t\ge 0}$ by
\[
  V_t = U_t\Indic{t<\zeta} + \partial \Indic{t\ge \zeta}.
\]
For bounded measurable $f$, adopting the usual convention that $f(\partial)=0$,
we have
\[
  \EE_x[f(V_t)] = \EE_x\Bigl[e^{-\int_0^t \kf(U_s) ds } f(U_t)\Bigr].
\]
The process $V$ is our main object of study.
It could be understood as taking $X$ killed at rate $q_X$ and $Y$ killed at rate $q_Y$,
and constructing the refracted Lévy process corresponding to these. However, killed
Lévy processes are not considered by \citet{NobaYano2016}, so we find it preferable
to introduce this killing by a multiplicative functional.

We are now in a position the state our goal. Let $b<0$, and define the
first passage time
\[
  \tau_b^- = \inf\{ t\ge 0: V_t < b\}.
\]
Let \(Q = (Q_t)_{t\ge 0}\) be the sub-Markovian semigroup
associated with $V$ killed upon first passage below $b$:
\begin{equation}\label{eq:def_Q_t}
Q_t f(x)
=\E_x\bigl[ f(V_t);\, t<\tau_b^- \wedge \zeta \bigr],
\qquad x>b,    
\end{equation}
for bounded measurable functions \(f\).
We are interested in the asymptotic properties of $Q$,
most especially the quasi-limiting distribution of the process.
Note that the choice $b<0$ is not a restriction, since otherwise we reduce
the question to one about the Lévy process $X$; we will discuss this in more
detail after stating our result.

We work under the following assumptions.
\begin{enumerate}
    \item[\namedlabel{a:W}{\textnormal{(W)}}]
    The process $X$ has no Gaussian component.

    \item[\namedlabel{a:R}{\textnormal{(R)}}]
    The process $X$ drifts to $-\infty$, or equivalently, $\psi'_X(0+)<0$.

    \item[\namedlabel{a:A1}{\textnormal{(A1)}}]
    At least one of the following holds:
    \begin{enumerate}
        \item[\namedlabel{a:A1a}{\textnormal{(a)}}]
        there exists $\ell<b$ belonging to the support of $\Pi_X$;

        \item[\namedlabel{a:A1b}{\textnormal{(b)}}]
        there exists $\ell\in(b,0)$ belonging to the support of $\Pi_Y$.
    \end{enumerate}

    \item[\namedlabel{a:A2}{\textnormal{(A2)}}]
    The point $0$ belongs to the support of $\Pi_X$.

    \item[\namedlabel{a:A3}{\textnormal{(A3)}}]
    The point $0$ belongs to the support of $\Pi_Y$.

    \item[\namedlabel{a:A4}{\textnormal{(A4)}}]
    The Lévy measure $\Pi_X$ has a non-trivial absolutely continuous
    component with density $\pi_X$, and there exist constants
    $0<\alpha_1<\alpha_2$ such that
    $
        \inf_{(-\alpha_2,-\alpha_1)}\pi_X>0.
    $

    \item[\namedlabel{a:A5}{\textnormal{(A5)}}]
    The Lévy measure $\Pi_Y$ has a non-trivial absolutely continuous
    component with density $\pi_Y$, and there exists $\alpha>0$ such that
    $
        \inf_{(-\alpha,0)}\pi_Y>0.
    $
\end{enumerate}

Assumption~\ref{a:W} is required in \cite{NobaYano2016} for the construction of the refracted
Lévy process, while
Assumption~\ref{a:R} will ensure that $V$ returns to zero from positive
levels.
The remaining assumptions are non-degeneracy conditions on the jump
mechanisms of $X$ and $Y$. Assumption~\ref{a:A1} ensures that the process
can move from the positive region into the interval $(b,0)$ without being
killed and is used to establish Lebesgue irreducibility.
Assumptions~\ref{a:A2} and~\ref{a:A3} ensure the availability of
arbitrarily small negative jumps for $X$ and $Y$, respectively.
Together with 
\ref{a:A4} and~\ref{a:A5}, they yield simultaneous Lebesgue
irreducibility and the minorization estimates required for the
$R$-theoretic argument.

Notice that Assumption~\ref{a:A5} implies
Assumption~\ref{a:A3} and also \ref{a:A1}\ref{a:A1b}. We nevertheless
retain Assumption~\ref{a:A3} separately, since several intermediate
results require only the weaker support condition and do not use the
density assumption~\ref{a:A5}. 

With the aim now of establishing some notation to be able to give our main
result explicitly, we turn to the two-sided exit problem for $V$. It turns
out that, as with Lévy processes, this admits a simple solution.
For $a>0$, define
\[
  \tau_a^+ = \inf\{ t\ge 0: V_t > a \}.
\]
For each $q\ge 0$, there exists a function $W_V^{(q)} \colon\R \times (-\infty,0)$, called the
\emph{scale function} of $V$, with the property that
\begin{equation}
  \label{eq:intro-exit-V}
  \EE_x \bigl[ e^{-q\tau_a^+} ; \tau_a^+ < \tau_b^- \wedge \zeta \bigr]
  =
  \frac{W_V^{(q)}(x,b)}{W_V^{(q)}(a,b)}.
\end{equation}
This mirrors the situation for a Lévy process, say $X$, where the problem is solved
in terms of a univariate function $W_X^{(q)}$. Indeed, we will later give
a representation of $W_V^{(q)}$ in terms of $W_X^{(q+q_X)}$ and $W_Y^{(q+q_Y)}$.
Equation \eqref{eq:intro-exit-V} is an extension of the formula obtained for $U$
by \cite{NobaYano2016}.

In passing from a two-sided exit problem to a one-sided problem,
the function
\begin{equation*}
    D^{(q)}(y)
    =
    \lim_{a\to\infty}
    \frac{
        W_V^{(q)}(a,y)
    }{
        W_X^{(q+q_X)}(a)
    },
    \qquad q\ge 0, \, y\leq 0,
\end{equation*}
will arise.
The asymptotic properties of $V$ killed on first passage below $b$ are formulated
in terms of $W_V^{(q)}$ and $D^{(q)}$. Thus far, we have considered only non-negative,
real values of $q$. However, when $x$ and $y$ are fixed, there are in fact extensions of
the maps $q\mapsto W_V^{(q)}(x,y)$
and $q\mapsto D^{(q)}(y)$ which are analytic on the whole complex plane.

We define
\begin{equation}\label{eq:sigma-b-introduction}
    \sigma(b)
    =
    \inf
    \left\{
        p\geq 0:
        D^{(-p)}(b)=0
    \right\},
\end{equation}
which will prove to be the decay rate of $Q$.
Before stating our main result, we need a few final pieces of notation.
For a probability measure $\mu$, we write $\P_\mu = \int \P_x \mu(dx)$ for the probability where
$V_0$ is distributed as $\mu$. For any measure $\nu$ and measurable function $f$,
we write $\nu f = \int f(x) \nu(dx)$ for integration.
For a signed measure $\nu$ and a positive function $\psi$ on a space $E$,
we define a weighted total variation norm
\[
  \lVert \nu\rVert_{\mathrm{TV}(\psi)}
  = \sup\{ \lvert \nu f\rvert :  f\colon E \to \R\text{ measurable s.t. } \lvert f\rvert \le \psi\}.
\]
Finally, to fit the framework of quasi-stationarity, we introduce measures with
explicit killing: the process $V$ under $\P^\dag_x$ is exactly the same as it
is under $\P_x$ up to the time $\tau_b^-$, at which point it is sent to the
cemetery state $\partial$.

\begin{theorem}
\label{thm:main-introduction}
Assume that \ref{a:W}, \ref{a:R} and
\textnormal{\ref{a:A1}--\ref{a:A5}} hold, and $\sigma(b) < q_X-\inf \Psi_X$.
Then:
\begin{enumerate}
\item
$\sigma(b)>0$ and the function $W_V^{(-\sigma(b))}(\cdot, b)$
is strictly positive on $(b,\infty)$, and the measure
\begin{equation}\label{eq:nu-introduction}
    \nu(dy)
    =
    \left(
        D^{(-\sigma(b))}(y)
        \mathbf{1}_{(b,0]}(y)
        +
        e^{-\Phi_X(q_X-\sigma(b))y}
        \mathbf{1}_{(0,\infty)}(y)
    \right)
    dy
\end{equation}
is non-zero and has finite total mass.
\item
The function $W_V^{(-\sigma(b))}(\cdot,b)$ and the measure $\nu$ are
$\sigma(b)$-invariant for $Q$, in that
\begin{equation}\label{eq:right-left-invariance-introduction}
    Q_tW_V^{(-\sigma(b))}(\cdot,b)(x)
    =
    e^{-\sigma(b)t}W_V^{(-\sigma(b))}(x,b),
    \text{ and }
    \nu Q_t f = e^{-\sigma(b) t} \nu f,
\end{equation}
for every $x>b$ and bounded measurable function $f$.
\item
There exist functions $\psi_1,\psi_2$ (with the property that
$\lim_{x\to\infty} \psi_1(x) = \infty$) and constants
$C,\gamma >0$ such that, for all probability
measures $\mu$ on $(b,\infty)$ with $\mu \psi_1<\infty$ and $\mu\psi_2>0$,
\begin{equation}\label{eq:main-asymptotic-introduction}
\Big\|
\mathbb{P}^\dag_\mu\!\big( V_t \in \,\cdot \mathbin{\big|} t < \tau_\partial \big)
- \frac{\nu(\cdot)}{\nu(b,\infty)}
\Big\|_{\mathrm{TV}(\psi_1)}
\leq C e^{-\gamma t} \frac{\mu\psi_1}{\mu\psi_2},
\qquad t>0.
\end{equation}
\end{enumerate}
\end{theorem}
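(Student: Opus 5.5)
The plan has three stages. First I extend the exit identity \eqref{eq:intro-exit-V} to negative $q$ to get the eigenfunction. Then I verify the left eigenmeasure through a resolvent computation. Finally I obtain the Yaglom limit with a rate from the Lyapunov-type criteria of \citet{ChampagnatVillemonais2023GeneralCriteria}, combined with $R$-theory to identify the decay rate.

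\emph{Step 1 (analytic continuation and the right eigenfunction).} I take $W_V^{(q)}(x,y)$ in its explicit form in terms of $W_X^{(q+q_X)}$ and $W_Y^{(q+q_Y)}$. I then show that for $q=-p$ with $p<\sigma(b)$ the exit identity still holds, using analytic continuation in $q$ together with monotone convergence. This requires $W_V^{(-p)}(\cdot,b)>0$ on $(b,\infty)$ for these $p$.

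Letting $a\to\infty$ and using $W_V^{(-p)}(a,b)/W_X^{(q_X-p)}(a)\to D^{(-p)}(b)$, the process $e^{pt}W_V^{(-p)}(V_{t\wedge\tau_a^+},b)$ killed at $\tau_b^-\wedge\zeta$ is a martingale. Here the condition $\sigma(b)<q_X-\inf\Psi_X$ ensures $\Phi_X(q_X-p)$ is well defined and real.

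At $p=\sigma(b)$, $D^{(-\sigma(b))}(b)=0$ kills the contribution of large excursions above $a$, since $W_X^{(q_X-\sigma)}(a)$ grows like $e^{\Phi_X(q_X-\sigma)a}$ while the overshoot term vanishes. This yields $Q_tW=e^{-\sigma t}W$. Positivity of $W_V^{(-\sigma)}(\cdot,b)$ follows as a limit of positive functions; strict positivity then follows from the invariance and irreducibility in Step 3. I get $\sigma(b)>0$ because $D^{(0)}(b)>0$, which is the ruin-type probability computation that uses (R).

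\emph{Step 2 (the left measure).} I will compute the resolvent density of $Q$, namely $u^{(q)}(x,y)$. I expect the standard form
\[
\frac{W_V^{(q)}(x,b)\,D^{(q)}(y)}{D^{(q)}(b)}-W_V^{(q)}(x,y)
\]
for $y\le 0$, with the analogous exponential form for $y>0$. As $q\downarrow-\sigma(b)$, the pole at $D^{(q)}(b)=0$ gives a residue proportional to $W_V^{(-\sigma)}(x,b)\,\nu(dy)$, with $\nu$ exactly as in \eqref{eq:nu-introduction}. A residue of the resolvent at its leading singularity is a left eigenmeasure, so $\nu Q_t=e^{-\sigma t}\nu$.

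Finite total mass reduces to showing $\Phi_X(q_X-\sigma(b))>0$, i.e.\ $q_X-\sigma(b)>\psi_X(0)=0$'s right branch. I expect this to follow since $\sigma(b)<q_X-\inf\Psi_X$ and $\Phi_X$ is the largest root. Non-negativity of $D^{(-\sigma)}$ on $(b,0]$ comes from minimality of $\sigma(b)$.

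\emph{Step 3 (convergence).} I verify the conditions of \citet{ChampagnatVillemonais2023GeneralCriteria}:
\begin{itemize}
\item[(a)] A local Doeblin minorization on a compact set $K\subset(b,\infty)$. This comes from Lebesgue irreducibility, via (A1)--(A5) and the jump densities.
\item[(b)] A Lyapunov function $\psi_1(x)=e^{\theta x}$ with $\theta$ slightly above $\Phi_X(q_X-\sigma)$, chosen so that $Q_1\psi_1\le \lambda_1\psi_1+C\mathbf 1_K$ with $\lambda_1<e^{-\sigma(b)}$. For $x$ large this reduces to $\psi_X(\theta)-q_X<-\sigma(b)$, which is precisely where the hypothesis $\sigma(b)<q_X-\inf\Psi_X$ is needed.
\item[(c)] A function $\psi_2$, which I take to be $W_V^{(-\sigma)}(\cdot,b)$ truncated, satisfying $Q_1\psi_2\ge\lambda_2\psi_2$ with $\lambda_2>\lambda_1$, together with a Harnack-type comparison on $K$.
\end{itemize}

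With these in place the general theorem yields \eqref{eq:main-asymptotic-introduction}. Its uniqueness statement identifies the quasi-stationary distribution with $\nu/\nu(b,\infty)$, and identifies the decay rate with $\sigma(b)$; this confirms the $R$-theoretic picture that the invariant function is integrable against $\nu$.

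\emph{Main obstacle.} I expect the hardest part to be the minorization and Harnack inequality near the refraction point $0$ and near $b$. This needs uniform lower bounds on transition densities for the Noba–Yano process started at or near $0$. My first attempt would use (A4)/(A5) to force a small jump landing with density on an interval, and then the strong Markov property at the first jump time, staying away from the construction at $0$.
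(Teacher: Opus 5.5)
\textbf{The gap is in Step 3(b).} Your Lyapunov argument never deals with the lower end of the state space. With $K$ compact in $(b,\infty)$ and $\psi_1(x)=e^{\theta x}$, $\theta>0$, the inequality $Q_1\psi_1\le\lambda_1\psi_1+C\mathbf 1_K$ must also hold on $(b,\inf K)$, where $\psi_1$ is smallest. Since $Q_1\psi_1(x)\ge e^{\theta b}\,\P^\dag_x(1<\tau_\partial)$, you would need $\P^\dag_x(1<\tau_\partial)\le\lambda_1e^{\theta(x-b)}$ as $x\downarrow b$, with $\lambda_1<e^{-\sigma(b)}$.

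Nothing forces this when $b$ is irregular for $Y$. Suppose $Y$ has bounded variation, drift $d<\lvert b\rvert/2$, and jumps only at a small rate $\epsilon$. Then unit-time survival from just above $b$ has probability at least about $e^{-\epsilon}$. Meanwhile $\sigma(b)$ should stay bounded away from $0$ as $\epsilon\to0$: absent $Y$-jumps, the process is driven to $0$ within time $\lvert b\rvert/d$ and then exposed to killing at rate $q_X>0$.

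What is missing is a quantitative statement that time spent below $K$ is penalised at an exponential rate strictly larger than $\sigma(b)$. The paper supplies exactly this as $\sigma(b,d_1)>\sigma(b)$ in \cref{sigma-monotonicity}, whose proof uses irreducibility, $\sigma(b)$-recurrence and the simplicity of the zero of $D^{(\cdot)}(b)$. It then builds this into $\psi_1(x)=\E^\dag_x[e^{\lambda_1(\tau_L\wedge\tau_\partial)}]$, with $\lambda_1\in(\sigma(b),\sigma(b,d_1))$ and $\lambda_1\le q_X-\inf\psi_X$, and stops the Lyapunov inequality at $\tau_L$ as in \eqref{eq:lyap1}. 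From below, $L$ is entered by creeping, so finiteness of $\psi_1$ on $(b,d_1)$ is governed by $\sigma(b,d_1)$.

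A case distinction on the regularity of $b$ for $Y$ might give an alternative. When $b$ is regular, take $\inf K$ very close to $b$; when it is not, extend $K$ down to $b$ and prove Doeblin and Harnack bounds uniformly up to $b$. Your proposal contains neither. So the difficulty near $b$ is not where you place it: minorisation and Harnack are only needed on $K$, and it is the drift condition below $K$ that needs new input.

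\textbf{Further points to repair.}
\begin{itemize}
\item In (b), $\theta$ must lie \emph{below} $\Phi_X(q_X-\sigma(b))$. For $\theta>\Phi_X(q_X-\sigma(b))$ one has $\psi_X(\theta)>q_X-\sigma(b)$, the opposite of what you need at $+\infty$. Also $\nu(e^{\theta\cdot})=\infty$, so $\mathrm{TV}(\psi_1)$ convergence to $\nu/\nu(b,\infty)$ would be meaningless. The hypothesis $\sigma(b)<q_X-\inf\psi_X$ is what lets you take, e.g., $\theta$ to be the minimiser of $\psi_X$.
\item In (c), $Q_1\psi_2\ge\lambda_2\psi_2$ for truncated $W_V^{(-\sigma(b))}(\cdot,b)$ is asserted, not shown; truncation destroys exact invariance. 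The paper instead checks $e^{\lambda_2t}\P^\dag_x(V_t\in L)\to\infty$ for $\lambda_2>\sigma(b)$, using the pointwise limit in \cref{lem:quasi_stationary_all_x}.
\item In Step 1, the positivity of $W_V^{(-p)}(\cdot,b)$ for $p<\sigma(b)$ is presupposed. It follows from continuing $r_b^{(q)}$ to $q>-\sigma(b)$ plus irreducibility: if $W_V^{(q)}(x_0,b)\le0$, then $r_b^{(q)}(x_0,\cdot)\le0$ on $(\max\{x_0,0\},\infty)$.
\item Likewise, $D^{(-\sigma(b))}\ge0$ on $(b,0]$ comes from $r_b^{(q)}(x,y)\ge0$ for $x<y\le0$, not from minimality of $\sigma(b)$.
\item The vanishing of $D^{(-\sigma(b))}(b)$ plays no role in the invariance of $W$. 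The boundary term at $\tau_a^+$ vanishes because $\P_x(\tau_a^+\le t)$ decays superexponentially in $a$.
\end{itemize}

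With these repairs, your Steps 1--2 are a legitimate and more direct alternative to the paper's Tuominen--Tweedie route. The martingale argument gives invariance of $W$ at every $x$. Multiplying the resolvent identity $R_qQ_s=e^{qs}\bigl(R_q-\int_0^{s} e^{-qu}Q_u\,du\bigr)$ by $(q+\sigma(b))^n$ and letting $q\downarrow-\sigma(b)$ gives $\nu Q_s=e^{-\sigma(b)s}\nu$, whatever the order $n$ of the zero. The convergence part is what remains incomplete.
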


In particular, the probability measure
$\frac{\nu(\cdot)}{\nu(b,\infty)}$
is the Yaglom limit corresponding to $Q$,
and the convergence takes place at exponential rate
and in a weighted total variation sense.

\medskip \noindent
Among the first works considering invariant distributions of killed Lévy processes, and the
one to which we owe the most in terms of approach, is that of \citet{Bertoin1997}. There,
Bertoin considers a spectrally negative Lévy process $X$, killed on exiting a compact interval $[0,a]$.
His approach uses $R$-theory. He first finds a $q$-resolvent density for the killed process,
for any $q\ge 0$, in terms of the $W_X^{(q)}$. By extending the density to complex $q$,
he finds it blows up at the first negative zero of the function $\R \ni q\mapsto W^{(q)}(a)$.
By appealing \citet{TuominenTweedie1979} and making use of a Tauberian argument, he deduces
an invariant function and measure for the killed process, corresponding to (i--ii) of
\cref{thm:main-introduction}.
Building on this work, \citet{Pis-exit} obtained analogous results for a reflected Lévy process.

Turning to the question of a Lévy process killed upon exiting a half-line, we find that
\citet{KyprianouPalmowski2006QuasiStationaryLevy} used Wiener--Hopf methods to characterise
the Yaglom limit in terms of its exponential moments. This work covers two wide classes of
Lévy processes, including ones with two-sided jumps and stable-like behaviour.
A detailed study of the spectrally positive case was made by \citet{Yamato2023_spectrally_positive_Levy},
including a full classification of the quasi-stationary distributions, again given
in terms of the region of positivity of the scale function.

When looking at a spectrally positive Lévy process killed on exiting $(b,\infty)$, the
process is continuous at the killing time. This property was fruitfully exploited in
later works: \citet{noba2024analyticpropertygeneralizedscale}
defined scale functions for downwards skip-free standard Markov processes,
and Yamato subsequently characterised the quasi-stationary distributions
for a process on the natural numbers \cite{yamato2023qsd}
and a real interval \cite{Yamato2026EntranceBoundary}, as well as
obtaining a Yaglom limit (at exponential rate)
in the case of `coming down from infinity'.
The quasi-stationary distributions are given in terms of the generalised
scale functions.
Other work in this vein, with representations in terms of the
resolvent under the excursion measure, can be found in \cite{Cerf-qsd,GT-taboo-regen}.

A final recent work of interest is that of \citet{Cavalli2020}, which is a study
of a growth-fragmentation process in which the growth rate of cells depends on
whether their size is above or below a certain threshold. Describing the
asymptotic mean behaviour
of this branching process is essentially equivalent to looking for the Yaglom limit
of a refracted Lévy process (in Kyprianou and Loeffen's sense) with
killing at state-dependent rate but without any killing at first passage.
Cavalli takes a slightly different approach, following \citet{BertoinWatson2018}.

To summarise our position within the literature, we are considering a problem
where the process is discontinuous at the first passage killing, which
places us in a different position from \cite{Yamato2023_spectrally_positive_Levy,yamato2023qsd,Yamato2026EntranceBoundary}
and other works in that vein. We follow the $R$-theory programme of
\cite{Bertoin1997,Pis-exit}, though there is an interesting departure in
terms of our assumptions. In order to meet certain irreducibility hypotheses,
it is assumed in \cite{Bertoin1997,Pis-exit} that the one-dimensional distributions of the Lévy process were
absolutely continuous; this is fairly strong and, in particular, excludes compound
Poisson processes with drift.
In this article we have been at pains to make weaker hypotheses, which led us to
assumptions (A1--5).
Though some improvement is no doubt possible, we think that these will cover many situations
of interest.

Finally, like \citet{yamato2023qsd,Yamato2026EntranceBoundary} but unlike
most of these works, we study the rate of convergence.
Our approach uses the general criteria for
exponential convergence to quasi-stationarity developed by
\citet{ChampagnatVillemonais2023GeneralCriteria}.
Since our state space $(b,\infty)$ is unbounded, it is also interesting
to investigate the dependence on the initial condition, since one
cannot expect uniformity.

\medskip\noindent
The remainder of this work is organised as follows.
In \cref{resolvent-densities-analytic}, we find expressions for the
resolvent of the refracted Lévy process killed outside an interval
and outside a half-line in terms of the functions $W_V^{(q)}$ and $D^{(q)}$,
and we study their analytic properties.
In \cref{s:irreducibility-properties}, we use (A1--5) to obtain the
Lebesgue irreducibility (\cref{lebesgue_irr})
and simultaneous Lebesgue irreducibility (\cref{sim-leb-irr}) of the refracted
Lévy process killed outside of a half-line, which are critical
for applying $R$-theory, as well as minorisation results which are
needed for the quasi-limiting convergence.
The $R$-theory project is carried out in \cref{s:positive-recurrence}, in which
we establish $\sigma(b)$-positive recurrence of the
process (\cref{lem:positive-recurrence}) and find
its invariant function and measure, with
\cref{thm:main-introduction}(i--ii) appearing as \cref{lem:rho_invariant_qsd}. 
Finally, in \cref{s:long-term},
we obtain convergence at exponential rate, providing
\cref{thm:main-introduction}(iii) as \cref{lem:combined-conditions}.

\section{Resolvent densities and analytic properties}\label{resolvent-densities-analytic}

We begin our study of the killed refracted Lévy process by establishing some
identities. Our aim is the resolvent of the process killed on exiting the
half-line $(b,\infty)$. This is conveniently expressed in terms of scale
functions, and we start out by recapping the situation for Lévy processes.
In the second half of the section, we take a look at the analytic properties
of these functions; we remind the reader that our aim in $R$-theory is to
identify a simple pole of the resolvent density.

Consider now the Lévy process $X$, and define its first-passage times
\begin{equation*}
    T_a^+
    =
    \inf\{t>0:X_t>a\}
    \text{ and }
    T_c^-
    =
    \inf\{t>0:X_t<c\}.
\end{equation*}
It is well-known \cite[section~8]{Kyprianou2014} that for any $q\ge 0$
and $c<x<a$,
\[
  \E_x\bigl[ e^{-q T_a^+} ; T_a^+ < T_c^- \bigr]
  = \frac{W_X^{(q)}(x-c)}{W_X^{(q)}(a-c)}.
\]
This identity is known as the \emph{two-sided exit problem}, and the function
$W^{(q)}_X$ is the \emph{scale function} of $X$. It satisfies $W_X^{(q)}(x) = 0$
for $x<0$ and on the positive half-line it admits a representation
in terms of Laplace transforms:
\[
  \int_0^\infty e^{-\theta x} W_X^{(q)}(x) \, dx
  =
  \frac{1}{\psi_X(\theta)-q},
  \qquad
  \theta > \Phi_X(q).
\]
$W_X$ is non-negative,
continuous and increasing on $\mathbb R$, and is strictly positive
on $(0,\infty)$.
We next define an analogous function for $V$, before proceeding to study resolvent
densities.

For $q\geq 0$, define the scale function \(W_V^{(q)} \colon \R \times (-\infty,0) \to \R \) associated with  \(V\) by
\[
W_V^{(q)}(x,y) :=
\begin{cases}
\displaystyle
\frac{q+q_X}{\Phi_X(q+q_X)}\,
W_X^{(q+q_X)}(x)\,W_Y^{(q+q_Y)}(-y)
\\[2ex]
\displaystyle\quad
{} +\iint
\Big(
e^{-\Phi_X(q+q_X)v}\,W_X^{(q+q_X)}(x)\,W_Y^{(q+q_Y)}(-y)
\\
\qquad\qquad\quad {} - W_X^{(q+q_X)}(x-v)\,W_Y^{(q+q_Y)}(-y+u)
\Big)\,
\tilde{\Pi}_X(du,dv),
& x>0, \\
W_Y^{(q+q_Y)}(x-y),
& x\le 0,
\end{cases}
\]
where
\[
\tilde{\Pi}_X(du, dv)
=
\mathbf{1}_{\{u<0,\, v>0\}}\, \Pi_X(du-v)\, dv,
\]
and $\Pi_X$ denotes the Lévy measure of $X$. 

Using standard properties of $W_X^{(q+q_X)}$ and $W_Y^{(q+q_Y)}$, one can see that
for fixed $y<0$, $W_V^{(q)}(\cdot, y)$ is non-negative and increasing and is
continuous on $\R\setminus\{0\}$, and that $W_V^{(q)}(x,y)>0$ when $y<x$.

Recall that $\tau_a^+$ and $\tau_b^-$ are first passage times for $V$, and that,
unless otherwise stated, we always have $f(\partial) = 0$.
The following pair of lemmas is obtained in the special case $q_X=q_Y=0$
(no killing) by \citet{NobaYano2016}.
\begin{lemma}[two-sided exit problem for $V$]\label{two-sided-exit}
For $q\geq 0$ and $b<0<a$,
\[
\mathbb{E}_x\!\left(
e^{-q\tau_a^+};\,
\tau_a^+ < \tau_b^-\wedge \zeta
\right)
=
\frac{W_V^{(q)}(x,b)}{W_V^{(q)}(a,b)}.
\]
\end{lemma}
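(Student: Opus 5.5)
The plan is to reduce the statement to the unkilled identity of \citet{NobaYano2016}, with the state-dependent killing absorbed into the exponential weight. Since $\zeta$ is built from an independent $\mathbf{e}\sim\mathrm{Exp}(1)$, conditioning on $U$ gives
\[
\mathbb{E}_x\bigl(e^{-q\tau_a^+};\tau_a^+<\tau_b^-\wedge\zeta\bigr)
=\mathbb{E}_x\Bigl[\exp\Bigl(-\int_0^{\tau_a^+}(q+\kf(U_s))\,ds\Bigr);\,\tau_a^+<\tau_b^-\Bigr],
\]
where the passage times on the right now refer to $U$. The weight is $q+q_X$ while $U\ge 0$ and $q+q_Y$ while $U<0$, so everything becomes a statement about $U$ alone.

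I would then redo the Noba--Yano argument with $\omega$-killing. Their scale function is assembled from excursions of $U$ away from $0$, using $W_X$, $W_Y$ and the Lévy measure, and the analogous construction here uses $W_X^{(q+q_X)}$ and $W_Y^{(q+q_Y)}$. The derivation splits into three steps.

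\emph{Step 1 (start at $x\le 0$).} Until $U$ first passes above $0$ it behaves like $Y$, which creeps upward. By the strong Markov property at the first passage above $0$ (which lands exactly at $0$), and using the classical two-sided exit identity for $Y$ with rate $q+q_Y$,
\[
F(x)=\frac{W_Y^{(q+q_Y)}(x-b)}{W_Y^{(q+q_Y)}(-b)}\,F(0),
\]
where $F$ denotes the left-hand side of the lemma. This matches the form $W_V^{(q)}(x,b)=W_Y^{(q+q_Y)}(x-b)$ for $x\le 0$, provided we identify $W_V^{(q)}(0,b)=W_Y^{(q+q_Y)}(-b)$.

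\emph{Step 2 (start at $x>0$).} Here $U$ behaves like $X$ until $T_0^-$. Splitting according to whether $X$ hits $a$ before going below $0$, and otherwise at the undershoot position $X_{T_0^-}=y\in(b,0)$, gives
\[
F(x)=\frac{W_X^{(q+q_X)}(x)}{W_X^{(q+q_X)}(a)}+\mathbb{E}_x\bigl[e^{-(q+q_X)T_0^-}F(X_{T_0^-});\,T_0^-<T_a^+\bigr].
\]
Undershoots below $b$ are killed and contribute nothing. The joint law of the undershoot and its pre-jump position is given by the standard resolvent formula: the resolvent density $W_X(a-x)W_X(z)/W_X(a)-W_X(z-x)$ on $(0,a)$ is integrated against $\Pi_X(du-z)$, and this is exactly where the measure $\tilde\Pi_X$ comes from. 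Substituting Step 1 for $F(y)$ expresses $F(x)$ linearly in terms of $F(0)$.

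\emph{Step 3 (determine $F(0)$, the main obstacle).} Point $0$ is irregular-free territory: $X$ has bounded variation by \ref{a:W}, and the Itô synthesis of \citet{NobaYano2016} describes excursions from $0$. The clean approach is the standard trick: start from $\varepsilon>0$, apply Step 2, and let $\varepsilon\downarrow 0$. Alternatively, use continuity of $F$ at $0^+$, which comes from the Feller property of $U$ as constructed by Noba--Yano, and solve the resulting linear equation. The delicate part is computing the limit $\varepsilon\to0$ of the undershoot integral and showing it produces the term $\frac{q+q_X}{\Phi_X(q+q_X)}W_XW_Y$. To handle this, I would use the identity
\[
\int_0^\infty e^{-\Phi v}\,W(a-v)\ldots
\]
together with the known formula
\[
\mathbb{E}_x\bigl[e^{-pT_0^-};\,T_0^-<\infty\bigr]=Z^{(p)}(x)-\tfrac{p}{\Phi(p)}W^{(p)}(x),
\]
passing $a\to\infty$ pieces as needed. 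Once everything is written over a common denominator, $F(x)$ takes the form $W_V^{(q)}(x,b)/W_V^{(q)}(a,b)$. Finally, I would check consistency by comparing with Noba--Yano at $q_X=q_Y=0$, after replacing $q$ by $q+q_X$ and $q+q_Y$ in their formula.
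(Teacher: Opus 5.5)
Your reduction to a functional of $U$ with weight $q+\kf$, and your Steps~1--2, are fine. One slip: for $X$ killed on leaving $[0,a]$, the resolvent density from $x$ at $z$ is $W_X^{(p)}(x)W_X^{(p)}(a-z)/W_X^{(p)}(a)-W_X^{(p)}(x-z)$ with $p=q+q_X$, not the transposed expression you wrote. Keeping the $\tilde\Pi_X$-integrands combined, a direct computation shows that Step~2 returns $F(x)=W_V^{(q)}(x,b)/W_V^{(q)}(a,b)$ for $x>0$ as soon as $F(0)=W_Y^{(q+q_Y)}(-b)/W_V^{(q)}(a,b)$. So everything rests on Step~3, and there the argument has a genuine gap.

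Assumption~\ref{a:W} says only that $X$ has no Gaussian part, so $X$ cannot creep downwards. It does not give bounded variation, and the combined integrand in the definition of $W_V^{(q)}$ is there precisely to allow unbounded variation. Write Step~2 as $F(\varepsilon)=W_X^{(p)}(\varepsilon)/W_X^{(p)}(a)+F(0)G(\varepsilon)$. If $X$ has unbounded variation, then $W_X^{(p)}(0)=0$ and $G(\varepsilon)\to1$, because $T_0^-$ and the undershoot both vanish under $\mathbb{P}_\varepsilon$. Letting $\varepsilon\downarrow0$, with or without continuity of $F$ at $0$, therefore yields only $F(0)=F(0)$. This is structural, not an accident. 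Steps~1--2 hold verbatim for any process that agrees with $U$ away from $0$ (for instance, one made sticky at $0$), and such modifications change $F(0)$ when $q+q_X>0$. So $F(0)$ can only come from how $U$ leaves $0$. Continuity of $F$ at $0$ would not follow from the Feller property alone anyway, since $F$ is a first-passage functional.

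Your argument does close when $X$ has bounded variation. Then $U$ under $\mathbb{P}_0$ follows $X$ up to $T_0^->0$, so $F(0)=\frac{W_X^{(p)}(0)/W_X^{(p)}(a)}{1-G(0)}$. The identity $1/W_X^{(p)}(0)=\frac{p}{\Phi_X(p)}+\int_0^\infty e^{-\Phi_X(p)v}\Pi_X((-\infty,-v))\,dv$, which follows from $\psi_X(\Phi_X(p))=p$, produces exactly $W_V^{(q)}(a,b)$ in the denominator.

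The missing ingredient is the excursion structure of $U$ at $0$ given by the Itô synthesis of Noba and Yano. This is what the paper relies on when it calls the extension from $U$ to $V$ direct and omits the proof. Two facts are needed:
\begin{enumerate}
\item $U$ spends zero Lebesgue time at $0$.
\item The part of an excursion before its first passage below $0$ is governed, up to normalisation, by the limit of $\mathbb{P}_\varepsilon(\cdot)/W_X(\varepsilon)$ for $X$ killed at $T_0^-$.
\end{enumerate}
Excursion theory then gives $F(0)=\lim_{\varepsilon\downarrow0}\frac{W_X^{(p)}(\varepsilon)/W_X^{(p)}(a)}{1-G(\varepsilon)}$. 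This is a $0/0$ limit that your linear equation cannot detect, but it can be evaluated with the tools you list:
\begin{enumerate}
\item Split $W_Y^{(q+q_Y)}(-b)\bigl(1-G(\varepsilon)\bigr)$ as $W_Y^{(q+q_Y)}(-b)\bigl(1-\mathbb{E}_\varepsilon[e^{-pT_0^-};T_0^-<\infty]\bigr)$ plus a combined $\tilde\Pi_X$-integral.
\item Use $\mathbb{E}_\varepsilon[e^{-pT_0^-};T_0^-<\infty]=Z_X^{(p)}(\varepsilon)-\frac{p}{\Phi_X(p)}W_X^{(p)}(\varepsilon)$, where $Z_X^{(p)}(\varepsilon)=1+p\int_0^\varepsilon W_X^{(p)}(y)\,dy$. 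After dividing by $W_X^{(p)}(\varepsilon)$, the first piece tends to $\frac{p}{\Phi_X(p)}W_Y^{(q+q_Y)}(-b)$; this is where that term comes from.
\item Handle the remaining integral by dominated convergence. The $\varepsilon$-dependent part is bounded by $\Ind_{\{v<\varepsilon\}}\bigl(W_Y^{(q+q_Y)}(-b)-W_Y^{(q+q_Y)}(u-b)\bigr)$, which is integrable by finiteness of $D^{(q)}(b)$.
\end{enumerate}
The result is $(1-G(\varepsilon))/W_X^{(p)}(\varepsilon)\to W_V^{(q)}(a,b)/\bigl(W_X^{(p)}(a)W_Y^{(q+q_Y)}(-b)\bigr)$, which is the required value of $F(0)$.
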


\begin{lemma}[resolvent killed outside of an interval]
\label{exist_resolvent}
For \(q\geq0\), \(b<0<a\), and any non-negative measurable function
\(f\),
\begin{equation}
\label{gen_resolvent}
\mathbb{E}_x\!\left[
\int_0^{\tau_a^+ \wedge \tau_b^-}
e^{-qt} f(V_t)\,dt
\right]
=
\int_{(b,a)} f(y)\, r_{(b,a)}^{(q)}(x,y)\, dy,
\end{equation}
and the $q$-resolvent density of $V$ killed outside the interval $[b,a]$ is
\[
r_{(b, a)}^{(q)}(x, y) =
\begin{cases}
\dfrac{W_V^{(q)}(x, b)}{W_V^{(q)}(a, b)}
\,W_X^{(q+q_X)}(a-y)
- W_X^{(q+q_X)}(x-y),
& y \in (0,a], \\[1.2em]
\dfrac{W_V^{(q)}(x, b)}{W_V^{(q)}(a, b)}
\,W_V^{(q)}(a,y)
- W_V^{(q)}(x,y),
& y \in [b,0).
\end{cases}
\]
\end{lemma}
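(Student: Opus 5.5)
We prove \eqref{gen_resolvent} by the standard strategy for resolvents of killed spectrally negative processes. First we treat the reference process on each half-line, then we glue the two pieces with the strong Markov property at the crossing of zero. Throughout we read the left-hand side with the killing at rate $\kf$ built in, since $f(\partial)=0$. So it equals $\E_x[\int_0^{\tau_a^+\wedge\tau_b^-} e^{-\int_0^t(q+\kf(U_s))ds} f(U_t)\,dt]$.

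\emph{Step 1: the strong Markov decomposition at $\tau_a^+$.} By the strong Markov property at $\tau_a^+$, and using that $V$ creeps upward (so $V_{\tau_a^+}=a$),
\[
\E_x\Bigl[\int_0^{\tau_b^-\wedge\zeta} e^{-qt}f(V_t)dt;\ \cdot\ \Bigr]\text{-type identities give}\quad
r^{(q)}_{(b,a)}(x,y)=r^{(q)}_{(b,\infty)}(x,y)-\frac{W_V^{(q)}(x,b)}{W_V^{(q)}(a,b)}\,r^{(q)}_{(b,\infty)}(a,y).
\]
Here the ratio comes from \cref{two-sided-exit}. Taking the candidate half-line quantities, $r^{(q)}_{(b,\infty)}$ is not yet available; we therefore avoid it. We work instead with an auxiliary form: it suffices to show that the stated $r_{(b,a)}^{(q)}$ vanishes at $x=a$. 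This holds by construction, since the two terms agree at $x=a$. It also suffices to show that the stated kernel has the correct "free part" $-W_X^{(q+q_X)}(x-y)$, resp.\ $-W_V^{(q)}(x,y)$, which must solve the problem killed only below $b$ up to a harmonic correction. Concretely, we verify \eqref{gen_resolvent} separately for $f$ supported in $(0,a)$ and in $(b,0)$.

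\emph{Step 2: targets in $(0,a)$.} For $x\le 0$, decompose at the first passage time $\tau_0^+$ of $V$ above $0$. Before this time no mass is collected, and $V_{\tau_0^+}=0$ by creeping. The Lévy-process part of \cref{two-sided-exit} with $Y$ killed at rate $q_Y$ gives weight $W_Y^{(q+q_Y)}(x-b)/W_Y^{(q+q_Y)}(-b)=W_V^{(q)}(x,b)/W_V^{(q)}(0,b)$. This reduces to $x\ge0$. For $x>0$, use the known resolvent of $X$ killed at rate $q_X$ on exiting $(0,a)$ for the path up to $T_0^-$. At that time $X$ jumps or creeps to a level $z\le 0$. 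If $z<b$ the process is killed; otherwise it restarts from $z$ and returns to $0$ with weight $W_V^{(q)}(z,b)/W_V^{(q)}(0,b)$ from Step 1. Taking $x\downarrow0$ in the resulting renewal-type equation determines the unknown quantity $u(y)$, the resolvent from $0$, by solving a linear equation. The ingredients needed are the overshoot law of $X$ at $T_0^-$ on $\{T_0^-<T_a^+\}$, which is the standard kernel $\Pi_X(du-v)\,[\ldots]dv$ involving $W_X^{(q+q_X)}$, and the definition of $W_V^{(q)}$ through $\tilde\Pi_X$. Once the compensation formula is applied, the integral term in the definition of $W_V^{(q)}(x,b)$ is precisely $\E_x[e^{-(q+q_X)T_0^-}W_Y^{(q+q_Y)}(X_{T_0^-}-b);T_0^-<T_a^+]$ written out. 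Regularity issues at $x=0$ (the process started at $0$ when $X$ has bounded variation, and the Itô-synthesis construction of \citet{NobaYano2016}) will be handled as in that paper, by approximating with starting points $x\downarrow0$ and using Feller continuity.

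\emph{Step 3: targets in $(b,0)$.} The same scheme applies, now with the $Y$-resolvent on $(b,0)$ collecting mass before $\tau_0^+$. The function $W_V^{(q)}(\cdot,y)$ plays for target $y$ the role that $W_V^{(q)}(\cdot,b)$ plays for the exit problem; indeed $W_V^{(q)}(x,y)=W_Y^{(q+q_Y)}(x-y)$ for $x\le0$ is the Lévy resolvent part. The main obstacle is the algebra in Step 2: showing that the solution of the renewal equation collapses to the claimed closed form. For this we will rely on the fact that, for $q_X=q_Y=0$, this is exactly the computation of \citet{NobaYano2016}. Our case follows by substituting $W_X^{(q+q_X)},W_Y^{(q+q_Y)}$ for $W_X^{(q)},W_Y^{(q)}$, because on each half-line the killing is at a constant rate and simply shifts $q$. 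So the main task is to check that each step of their argument involves the killing only through these shifted scale functions.
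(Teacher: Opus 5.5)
Your closing paragraph is where the proof actually lives, and it follows the same route as the paper. The paper gives no argument for this lemma beyond saying that extending \citet{NobaYano2016}'s $q_X=q_Y=0$ result from $U$ to $V$ is direct. The point is that $e^{-\int_0^t \kf(U_s)\,ds}$ turns the discount rate into $q+q_X$ on pieces of path above $0$ and $q+q_Y$ below.

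When you do that check, the shift must be applied to every $X$-quantity, not only to the scale functions as you phrase it. In particular, the prefactor $\frac{q}{\Phi_X(q)}$ in Noba--Yano's formula has to become $\frac{q+q_X}{\Phi_X(q+q_X)}$. You can confirm this by taking $Y=X$, $q_Y=q_X$: there $W_V^{(q)}(x,y)$ must reduce to $W_X^{(q+q_X)}(x-y)$, which forces $D^{(q)}(y)=e^{-\Phi_X(q+q_X)y}$. A Laplace transform check in $-y$ shows this identity holds only with the $q+q_X$ prefactor.

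The scaffolding in Steps 1--3 should not be presented as part of the proof, because several of its claims are wrong.

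\begin{itemize}
\item Vanishing at $x=a$ and having the right ``free part'' are consequences of the formula, not a characterisation of it. They become sufficient only once you have shown that $r^{(q)}_{(b,a)}(\cdot,y)$ equals the free part plus $c(y)W_V^{(q)}(\cdot,b)$, and that is the real content of the lemma.
\item The $\tilde\Pi_X$-integral in $W_V^{(q)}(x,b)$ is not literally $\E_x[e^{-(q+q_X)T_0^-}W_Y^{(q+q_Y)}(X_{T_0^-}-b);T_0^-<T_a^+]$. Decomposing \cref{two-sided-exit} at $T_0^-$ shows that this expectation equals $W_V^{(q)}(x,b)-\frac{W_X^{(q+q_X)}(x)}{W_X^{(q+q_X)}(a)}W_V^{(q)}(a,b)$, so it depends on $a$.
\item Most importantly, your renewal equation is $R(x,y)=R^X_{(0,a)}(x,y)+h(x)u(y)$, with $h(x)$ the discounted weight of returning to $0$ via $T_0^-$. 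Letting $x\downarrow0$ in it does not determine $u(y)$ when $X$ has unbounded variation. Then $W_X^{(q+q_X)}(0)=0$, so $R^X_{(0,a)}(0+,y)=0$, while $h(0+)=1$, and the equation collapses to $u(y)=u(y)$. Feller continuity does not help.
\end{itemize}

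For targets $y\in(0,a)$ there is a non-degenerate version that uses only \cref{two-sided-exit}. Take $0<\epsilon<y$. The strong Markov property at $\tau_\epsilon^+$ from $0$ gives $u(y)=\frac{W_V^{(q)}(0,b)}{W_V^{(q)}(\epsilon,b)}R(\epsilon,y)$, since no mass is collected in $(\epsilon,a)$ before $\tau_\epsilon^+$. Combine this with the renewal equation at $x=\epsilon$ and the identity above. This yields $u(y)=\frac{W_V^{(q)}(0,b)}{W_V^{(q)}(a,b)}W_X^{(q+q_X)}(a-y)$, and hence the first line of the formula for all $x$.

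For targets in $(b,0)$ no comparably clean shortcut is apparent, because from $0$ the process already charges $(b,0)$ before $\tau_\epsilon^+$. There you do need Noba--Yano's excursion-theoretic input, or an approximation argument, as your final paragraph concedes.
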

The extension from $U$ to $V$ is quite direct, so we omit the proofs.

\begin{lemma}\label{lem:limit_potential}
For \(q\geq 0\), the \(q\)-resolvent density of the process \(V\)
killed upon first passage below \(b<0\) exists and is given by
\[
r^{(q)}_b(x, y) =
\begin{cases}
\displaystyle 
\frac{e^{-\Phi_X(q+q_X)y}}{D^{(q)}(b)}W_{V}^{(q)}(x,b) \,  - W^{(q+q_X)}_X(x-y), 
& y \in (0,\infty), \\[2ex]
\displaystyle 
\frac{D^{(q)}(y)}{D^{(q)}(b)} \, W_{V}^{(q)}(x,b) - W_{V}^{(q)}(x,y), 
& y \in (b,0],
\end{cases}
\]
where $D^{(q)} \colon (-\infty,0) \to \R$ is given by
\[
D^{(q)}(y) = \frac{q+q_X}{\Phi_X(q+q_X)}\,W_Y^{(q+q_Y)}(-y)
+\iint \Big( W^{(q+q_Y)}_Y(-y)\, - W^{(q+q_Y)}_Y(u-y)\Big)\,e^{-\Phi_X(q+q_X)v} \; \tilde{\Pi}_X(du , dv).
\]
\end{lemma}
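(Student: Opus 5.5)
We derive the half-line resolvent from the interval resolvent of \cref{exist_resolvent} by letting $a\to\infty$. Fix $q\ge 0$, $x>b$ and a non-negative measurable $f$. Since $\tau_a^+\uparrow\infty$ almost surely as $a\to\infty$ (paths are càdlàg, hence locally bounded), monotone convergence gives
\[
\E_x\Bigl[\int_0^{\tau_b^-} e^{-qt}f(V_t)\,dt\Bigr]
=\lim_{a\to\infty}\int_{(b,a)} f(y)\,r^{(q)}_{(b,a)}(x,y)\,dy .
\]
Here $V_t=\partial$ after $\zeta$ and $f(\partial)=0$. It therefore suffices to show that $r^{(q)}_{(b,a)}(x,y)\to r^{(q)}_b(x,y)$ pointwise. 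Since $r^{(q)}_{(b,a)}(x,y)$ is non-decreasing in $a$ (as the resolvent of a larger domain), monotone convergence applied to $f(y)r^{(q)}_{(b,a)}(x,y)\mathbf 1_{(b,a)}(y)$ then identifies the limit. Non-negativity holds because these are resolvent densities.

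For the pointwise limit, write $p=q+q_X$ and divide the numerator and denominator of the ratio $W_V^{(q)}(x,b)/W_V^{(q)}(a,b)$ by $W_X^{(p)}(a)$.

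\emph{Case $y\in(0,\infty)$.} The standard Lévy scale-function asymptotic gives $W_X^{(p)}(a-y)/W_X^{(p)}(a)\to e^{-\Phi_X(p)y}$. This follows from $W_X^{(p)}(z)=e^{\Phi_X(p)z}W_{\Phi_X(p)}(z)$, where $W_{\Phi_X(p)}$ is increasing with a finite positive limit $1/\psi'_X(\Phi_X(p))$. When $p=0$ this limit uses \ref{a:R}, since then $\Phi_X(0)>0$ and $\psi'(\Phi(0))>0$. The $y\in(0,\infty)$ case of the formula then follows once we know that $W_V^{(q)}(a,b)/W_X^{(p)}(a)\to D^{(q)}(b)$ with $D^{(q)}(b)>0$.

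\emph{Case $y\in(b,0]$.} Here we need $W_V^{(q)}(a,y)/W_X^{(p)}(a)\to D^{(q)}(y)$, which is precisely the definition of $D^{(q)}$. So the real work is to compute this limit explicitly and check that it equals the stated formula.

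The computation is done by plugging the defining formula for $W_V^{(q)}(a,y)$ (with $a>0$) into the ratio and dividing term by term. The first term gives $\frac{p}{\Phi_X(p)}W_Y^{(q+q_Y)}(-y)$ directly. Inside the double integral we get
\[
e^{-\Phi_X(p)v}W_Y(-y)-\frac{W_X^{(p)}(a-v)}{W_X^{(p)}(a)}\,W_Y(-y+u),
\]
which converges pointwise to $e^{-\Phi_X(p)v}\bigl(W_Y(-y)-W_Y(u-y)\bigr)$. This is the integrand in the stated formula for $D^{(q)}$.

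\emph{Main obstacle: exchanging the limit with the $\tilde\Pi_X$-integral.} I would split the integrand as
\[
e^{-\Phi_X(p)v}\bigl(W_Y(-y)-W_Y(u-y)\bigr)+W_Y(u-y)\Bigl(e^{-\Phi_X(p)v}-\frac{W_X^{(p)}(a-v)}{W_X^{(p)}(a)}\Bigr).
\]
\begin{itemize}
\item The first piece does not depend on $a$. It is integrable because $W_Y$ is increasing and Lipschitz-like away from $0$ (its increment is bounded by $C\min(|u|,1)$-type quantities), and $\int\!\!\int \min(|u|,1)e^{-\Phi v}\Pi_X(du-v)dv$ is finite by the Lévy measure integrability $\int(1\wedge z^2)\Pi_X(dz)$ after a change of variables $z=u-v$.
\item For the second piece, monotonicity of $W_{\Phi_X(p)}$ gives $0\le e^{-\Phi_X(p)v}-W_X^{(p)}(a-v)/W_X^{(p)}(a)\le e^{-\Phi_X(p)v}$, and this difference tends to $0$. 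Since $u-y<-y$, the factor $W_Y(u-y)$ is bounded by $W_Y(-y)$ and vanishes when $u<y$. The product is therefore dominated by $W_Y(-y)e^{-\Phi v}\mathbf 1_{\{y<u<0\}}$. This bound has finite $\tilde\Pi_X$-mass near $u\approx0$ only when combined with the cancellation of the first piece, so I would instead bound the second piece by $\bigl(W_Y(-y)-W_Y(u-y)\bigr)e^{-\Phi v}$ plus $W_Y(-y)\bigl|e^{-\Phi v}-W_X(a-v)/W_X(a)\bigr|$. For the latter, the $\tilde\Pi_X$ mass of $\{v>0,\,u<0\}$ with weight $\min(1,v)$-type decay is controlled using $\int_0^\infty \overline{\Pi}_X(v)\,(\cdot)\,dv$, which is finite under \ref{a:W} since $X$ has bounded variation or at least $\int_0^1 \overline\Pi_X<\infty$ in the relevant case. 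I expect this careful domination, in particular near $v\downarrow0$ and $u\uparrow0$, to be the delicate point.
\end{itemize}

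Finally, $D^{(q)}(b)>0$ follows because it is a limit of positive ratios bounded below by the leading term $\frac{p}{\Phi}W_Y(-b)>0$ when the integral is non-negative (it is, by monotonicity of $W_Y$). If $p=0$, the coefficient $p/\Phi_X(p)$ is $0$ but the integral is then strictly positive.
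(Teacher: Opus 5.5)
Your overall structure is sound. You pass $a\to\infty$ in the interval resolvent, use that the killed resolvent measures increase with $a$, and reduce everything to showing $\lim_{a\to\infty}W_V^{(q)}(a,y)/W_X^{(p)}(a)=D^{(q)}(y)$ with the stated explicit formula. For $y\in(b,0]$, the monotonicity-in-$a$ argument is a tidy alternative to the paper's explicit dominating function.

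\textbf{The gap.} It sits exactly at the step you flag as delicate: exchanging $a\to\infty$ with the $\tilde\Pi_X$-integral. The justification you sketch does not work. Your second piece is controlled by
$W_Y(-y)\int_0^\infty\bigl(e^{-\Phi_X(p)v}-W_X^{(p)}(a-v)/W_X^{(p)}(a)\bigr)\overline{\Pi}_X(v)\,dv$, where $\overline{\Pi}_X(v)=\Pi_X((-\infty,-v))$. The only bound you establish for the bracket is $e^{-\Phi_X(p)v}$. That bound is integrable against $\overline{\Pi}_X(v)\,dv$ near $v=0$ only if $\int_{(-1,0)}|z|\,\Pi_X(dz)<\infty$, that is, only if $X$ has bounded variation. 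Assumption (W) removes the Gaussian part but does not exclude unbounded variation, so your claim that $\int_0^1\overline{\Pi}_X<\infty$ ``under (W)'' is false. The same objection kills the cruder bound $W_Y(-y)e^{-\Phi_X(p)v}\mathbf 1_{\{y<u<0\}}$, whose $\tilde\Pi_X$-integral is of order $\int_{(y,0)}|z|\,\Pi_X(dz)$. To rescue dominated convergence you would need $e^{-\Phi_X(p)v}-W_X^{(p)}(a-v)/W_X^{(p)}(a)\le C(v\wedge1)$ uniformly in $a\ge a_0$. That is true, for instance via bounded one-sided derivatives of $W_{\Phi_X(p)}$ on $[a_0-1,\infty)$, but it is an extra ingredient you have not supplied.

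\textbf{The missing idea.} The paper removes the difficulty by using monotonicity in $a$ rather than a bound. By the two-sided exit problem,
$W_X^{(p)}(a-v)/W_X^{(p)}(a)=\E_0\bigl[e^{-p\tau_v^+};\tau_v^+<\tau_{v-a}^-\bigr]$.
This is non-decreasing in $a$ and increases to $e^{-\Phi_X(p)v}$. Consequently the full integrand
$e^{-\Phi_X(p)v}W_Y(-y)-\frac{W_X^{(p)}(a-v)}{W_X^{(p)}(a)}W_Y(u-y)$
is non-negative and non-increasing in $a$. It is also $\tilde\Pi_X$-integrable at any fixed $a_0$, since that integral is part of the finite quantity $W_V^{(q)}(a_0,y)$. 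Monotone convergence for decreasing families, or dominated convergence with the $a_0$-integrand as the dominating function, then gives the limit. The same step shows $D^{(q)}(y)<\infty$, so your Lipschitz estimate on the increments of $W_Y$ for the first piece is not needed either.

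Your appeal to the monotonicity of $W_{\Phi_X(p)}$ in its argument only gives the pointwise inequality $W_X^{(p)}(a-v)/W_X^{(p)}(a)\le e^{-\Phi_X(p)v}$. It does not give the monotonicity in $a$, which is what makes the interchange free.
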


\begin{proof}
To begin with, we see from \cite[equation~(8.11)]{Kyprianou2014} that,
for $a,v\ge 0$ and for $q\in\R$ such that $q+q_X\ge 0$,
\begin{equation}
  \label{eq:exit-problem-monotonicity}
  \frac{W_X^{(q+q_X)}(a-v)}{W_X^{(q+q_X)}(a)}
  =
  \E\bigl[ e^{-(q+q_X)\tau_v^+} ; \tau_v^+ < \tau_{v-a}^-\bigr]
  \le
  \E\bigl[ e^{-(q+q_X)\tau_v^+} ; \tau_v^+ < \tau_{v-a'}^-\bigr],
  \qquad \text{for } a\le a',
\end{equation}
and so, using \cite[equation~(3.15)]{Kyprianou2014},
\begin{equation}
  \label{eq:WX-limit}
  \frac{W_X^{(q+q_X)}(a-v)}{W_X^{(q+q_X)}(a)}
  \mathbin{\nearrow}
  \E\bigl[ e^{-(q+q_X)\tau_v^+} ; \tau_v^+<\infty \bigr]
  = e^{-\Phi_X(q+q_X)v},
  \qquad a\to\infty.
\end{equation}
We will show that, for all $y\le 0$,
\begin{equation}
  \label{eq:WV-limit}
  \lim_{a\to\infty} \frac{W_V^{(q)}(a,y)}{W_X^{(q+q_X)}(a)}
  = D^{(q)}(y) \in (0,\infty),
\end{equation}
and from these two statements, the claim about resolvents follows by a simple calculation.

Starting with the left-hand side of \eqref{eq:WV-limit},
\begin{align}\label{dini-type}
  \frac{W_V^{(q)}(a,y)}{W_X^{(q+q_X)}(a)}
  &=
  \frac{q+q_X}{\Phi_X(q+q_X)} W_Y^{(q+q_Y)}(-y)\nonumber \\
  & \quad {}
  + \iint \biggl( W_Y^{(q+q_Y)}(-y) \biggr. \nonumber\\
  & \qquad\qquad\quad 
  \biggl. {} - e^{\Phi_X(q+q_X)v}\frac{W_X^{(q+q_X)}(a-v)}{W_X^{(q+q_X)}(a)}
  W_Y^{(q+q_Y)}(u-y) \biggr)
  e^{-\Phi_X(q+q_X)v} \tilde{\Pi}_X(du, dv).
\end{align}
By \eqref{eq:WX-limit}, the integrand is a decreasing function of $a$,
and thus by dominated convergence, the limit as $a\to\infty$ is given by
$D^{(q)}(y)<\infty$.
Moreover, $W_Y^{(q+q_Y)}(-y)-W_Y^{(q+q_Y)}(u-y) > 0$ for $u \in (-y,0)$,
so $D^{(q)}(y)>0$. This establishes \eqref{eq:WV-limit}.

Now, considering the resolvent density $r^{(q)}_{(b,a)}$ when $y>0$, we have
\[
  r_{(b,a)}^{(q)}(x,y)
  = \frac{W_X^{(q+q_X)}(a-y)}{W_X^{(q+q_X)}(a)}
  \frac{W_X^{(q+q_X)}(a)}{W_V^{(q)}(a,b)}
  W_V^{(q)}(x,b)
  - W_X^{(q+q_X)}(x-y).
\]
The considerations above show that, for fixed $y$, the first and second ratio
are both increasing in $a$ and converge to our putative expression for
$r^{(q)}_b(x,y)$. Applying monotone convergence to the integral
\eqref{gen_resolvent} restricted to $(0,\infty)$ completes the proof for $y>0$.

Take now $y\le 0$. In this case we have
\[
  r_{(b,a)}^{(q)}(x,y)
  = \frac{W_V^{(q)}(a,y)}{W_X^{(q+q_X)}(a)}
  \frac{W_X^{(q+q_X)}(a)}{W_V^{(q)}(a,b)}
  W_V^{(q)}(x,b)
  - W_V^{(q)}(x,y).
\]
Now, we see that for fixed $y$, the first ratio is decreasing in $a$ to $D^{(q)}(y)$,
and the second is increasing to $1/D^{(q)}(b)$.
Take any $a_0>0$. This function of $y$ is dominated (for any $x$ and uniformly in $a\ge a_0$) by
$g(y) \coloneqq \frac{W_V^{(q)}(a_0,y)}{W_X^{(q+q_X)}(a_0)} \frac{1}{D^{(q)}(b)} W_V^{(q)}(x,b) - W_V^{(q)}(x,y)$.
Since $W_V^{(q)}$ is bounded on compacts, $\int_b^0 g(y)\, dy < \infty$, and therefore we can apply 
dominated convergence in the integral \eqref{gen_resolvent} over the domain $[b,0]$.
This completes the proof for $y\le 0$.
\end{proof}

\begin{lemma}\label{lem:weighted-to-unweighted}
  For any $q\ge 0$, $x \in \R$, and $\epsilon > 0$,
% originally the discounting was e^{-\Phi_X(\inf \Psi_X)v} and this was called $R^{(q)}$
  \begin{align}\label{A-express}
    0 \le A(q, \epsilon, x) \coloneqq \iint \Big(W_Y^{(q)}(-x)-W_Y^{(q)}(u-x)\Big)e^{-\epsilon v}\,\tilde\Pi_X(du,dv)
    < \infty.  
  \end{align}
  and is integrable in $(b, 0).$
\end{lemma}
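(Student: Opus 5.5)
We will prove that the integrand is non-negative, bound it pointwise, and then use Fubini to bound the integral over $x\in(b,0)$. Throughout we write the measure $\tilde\Pi_X$ in terms of the jump size. Under $\tilde\Pi_X(du,dv)=\Ind_{\{u<0,v>0\}}\Pi_X(du-v)\,dv$, set $z=u-v<0$, so that $z$ is distributed according to $\Pi_X(dz)$ and $v$ ranges over $(0,-z)$, with $u=z+v\in(z,0)$. In particular $\lvert u\rvert\le\lvert z\rvert$, and
\[
  \iint g(u,v)\,\tilde\Pi_X(du,dv)=\int_{(-\infty,0)}\int_0^{-z} g(z+v,v)\,dv\,\Pi_X(dz).
\]
\emph{Non-negativity} is then immediate. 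Since $u<0$ we have $u-x<-x$, and $W_Y^{(q)}$ is non-decreasing on $\R$, so the integrand is non-negative.

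\emph{Pointwise finiteness.} We treat three cases according to the sign of $x$.
\begin{itemize}
\item If $x>0$, both scale-function terms vanish, so $A=0$.
\item If $x=0$, the integrand equals $W_Y^{(q)}(0)e^{-\epsilon v}$. This is zero unless $Y$ has bounded variation. In the bounded-variation case the double integral is bounded by $W_Y^{(q)}(0)\int (\lvert z\rvert\wedge \epsilon^{-1})\,\Pi_X(dz)$. This is finite because $X$ has no Gaussian part and, if $X$ has unbounded variation, we may still use the following case instead. On reflection, it is cleaner to restrict the claim to $x<0$, which is all that is used later.
\item If $x<0$, split the jumps into $z\le -1$ and $z\in(-1,0)$. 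For large jumps, bound the difference by $W_Y^{(q)}(-x)$. The $v$-integral then gives at most $\epsilon^{-1}$, so this part is at most $W_Y^{(q)}(-x)\,\epsilon^{-1}\Pi_X(-\infty,-1)<\infty$. For small jumps with $\lvert z\rvert<\min(1,-x/2)$, the points $u-x$ and $-x$ both lie in $[-x/2,-x]$. On this compact subset of $(0,\infty)$, $W_Y^{(q)}$ has locally bounded one-sided derivatives (a standard property of scale functions, e.g.\ from \cite{Kyprianou2014}). So the difference is at most $K_x\lvert u\rvert\le K_x\lvert z+v\rvert$. Integrating gives $\int_0^{-z}\lvert z+v\rvert\,dv=z^2/2$, and $\int_{(-1,0)}z^2\,\Pi_X(dz)<\infty$. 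The intermediate jumps, with $\lvert z\rvert\in[\min(1,-x/2),1)$, carry finite $\Pi_X$-mass, and the integrand there is bounded by $W_Y^{(q)}(-x)$.
\end{itemize}
Together these give $A(q,\epsilon,x)<\infty$.

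\emph{Integrability on $(b,0)$.} By Tonelli's theorem we may integrate in $x$ first. Substituting $s=-x$ and $s=u-x$, and using $W_Y^{(q)}=0$ on $(-\infty,0)$,
\[
  \int_b^0\bigl(W_Y^{(q)}(-x)-W_Y^{(q)}(u-x)\bigr)\,dx
  =\int_0^{-b}W_Y^{(q)}(s)\,ds-\int_{0}^{u-b}W_Y^{(q)}(s)\,ds
  =\int_{u-b}^{-b}W_Y^{(q)}(s)\,ds\le \lvert u\rvert\,W_Y^{(q)}(-b).
\]
This expression is also trivially at most $-b\,W_Y^{(q)}(-b)$. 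Hence
\[
\int_b^0 A(q,\epsilon,x)\,dx\le W_Y^{(q)}(-b)\int\int_0^{-z}\bigl(\lvert z+v\rvert\wedge(-b)\bigr)e^{-\epsilon v}\,dv\,\Pi_X(dz).
\]
For $z\in(-1,0)$ the inner integral is at most $z^2/2$. For $z\le-1$ it is at most $-b/\epsilon$. Both contributions are finite because $\Pi_X$ is a Lévy measure.

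The main obstacle is the pointwise bound near small jumps, because $W_Y^{(q)}$ may fail to be Lipschitz at $0$. This is exactly why we keep $-x$ bounded away from $0$ in the pointwise estimate, and why we use the Fubini computation, which needs no regularity at all, for the integrability statement. As a by-product, the Fubini computation also yields finiteness of $A$ for almost every $x$.
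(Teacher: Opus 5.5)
Your argument is correct for $x\neq 0$, and it takes a genuinely different route from the paper. The paper never estimates the integrand directly. It splits according to $\{v\le M\}$ and $\{v>M\}$. On $\{v\le M\}$ it bounds $e^{-\epsilon v}$ by $e^{\Phi M}e^{-\Phi v}$, which reduces the problem to the integral appearing in $D^{(q)}$; the finiteness of that integral is imported from the limit \eqref{eq:WV-limit}, and hence ultimately from the finiteness of $W_V^{(q)}(a,y)$. The tail $\{v>M\}$ is bounded crudely by $W_Y^{(q)}(-x)$ times the $\tilde\Pi_X$-mass. Integrability on $(b,0)$ is then read off from integrability of $D^{(q)}$, which is inferred from finiteness of the resolvent $r^{(q)}_b$ in \cref{lem:limit_potential}.

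You instead parametrise by the jump size $z=u-v$. Small jumps are handled analytically, through local Lipschitz continuity of $W_Y^{(q)}$ on compacts of $(0,\infty)$ together with $\int (z^2\wedge 1)\,\Pi_X(dz)<\infty$. Integrability comes from an explicit Tonelli computation,
\[
\int_b^0\bigl(W_Y^{(q)}(-x)-W_Y^{(q)}(u-x)\bigr)\,dx=\int_{u-b}^{-b}W_Y^{(q)}(s)\,ds\le (\lvert u\rvert\wedge(-b))\,W_Y^{(q)}(-b),
\]
which uses only monotonicity. The paper's route is shorter because it recycles \cref{lem:limit_potential}, but it inherits whatever that lemma claims. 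Yours is self-contained: it needs no exit identities and no resolvent, it gives explicit bounds, and it shows exactly where finiteness is delicate, namely the possible failure of Lipschitz control of $W_Y^{(q)}$ at the origin.

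That delicacy bites at $x=0$, and your treatment of that case needs fixing. Your sentence that the bound there is ``finite because $X$ has no Gaussian part'' is wrong, and restricting to $x\neq 0$ is not merely cleaner but necessary. Since $W_Y^{(q)}(u)=0$ for $u<0$,
\[
A(q,\epsilon,0)=W_Y^{(q)}(0)\,\epsilon^{-1}\int_{(-\infty,0)}\bigl(1-e^{\epsilon z}\bigr)\,\Pi_X(dz).
\]
This is $+\infty$ whenever $Y$ has bounded variation, so that $W_Y^{(q)}(0)>0$, and $X$ has unbounded variation, so that $\int_{(-1,0)}\lvert z\rvert\,\Pi_X(dz)=\infty$. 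That combination is allowed by \ref{a:W} and the other standing assumptions.

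So the statement ``for any $x\in\R$'' is false at this single point. The paper's appeal to \eqref{eq:WV-limit} for all $y\le 0$ overlooks the same thing. You should state explicitly that the pointwise bound holds for $x\neq 0$ and can fail at $x=0$. This is harmless for integrability on $(b,0)$ and for every later use, which only involves $x<0$ or Lebesgue-a.e.\ $x$.
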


\begin{proof}
We know, since it appears in the expression for $D^{(q)}(x)$, \eqref{eq:WV-limit} implies that
\begin{equation} \label{eq:Dq_finite}
  \iint \left(W_Y^{(q)}(-x)-W_Y^{(q)}(u-x)\right)e^{-\Phi_X(q)v}\,\tilde{\Pi}_X(du, dv)<\infty.
\end{equation}
Define
\[
f_{q,\epsilon}(u, v) \coloneqq e^{-\epsilon v}\left(W_Y^{(q)}(-x)-W_Y^{(q)}(u-x)\right)\ge0,\qquad u\le 0, v\geq 0.
\]
Fix \(M>0\) and split the integral into the regions \(\{v\le M\}\) and \(\{v>M\}\).
For \(0\le v\le M\), we have \(e^{-\Phi_X(q)v}\ge e^{-\Phi_X(q)M}>0\). Therefore,
\[
\iint_{v\le M} f_{q,\epsilon}(u, v)\,\tilde\Pi_X(du,dv)
\le e^{\Phi_X(q)M}\iint_{v\leq M}  \left(W_Y^{(q)}(-x)-W_Y^{(q)}(u-x)\right)e^{-\Phi_X(q)v}\,\tilde\Pi_X(du,dv),
\]
which is finite by \eqref{eq:Dq_finite}.
Next, take \(v>M\).
Since \(W_Y^{(q)}(u-x)\ge 0\) for \(u<0\), we have
\begin{align*}
  \iint_{v> M} f_{q,\epsilon}(u, v)\,\tilde\Pi_X(du,dv)
  &\le W_Y^{(q)}(-x) \iint_{v> M} e^{-\epsilon v}\,\tilde\Pi_X(du,dv) \\
  &= \frac{W_Y^{(q)}(-x)}{\epsilon} \int_{(-\infty,M]} (e^{\epsilon w} - e^{-\epsilon M}) \, \Pi_X(d w)
  < \infty,
\end{align*}
where we performed the change of variables $w = u - v$ and used Tonelli's theorem.
This completes the proof of finiteness. 

Next, we focus on integrability. Indeed, the second term is integrable over the compact interval $[b,0]$ because the scale function is continuous. We now deal with the first term. Note that
\begin{multline*}
    \iint_{v\leq M} \bigl(W_Y^{(q)}(-x)-W_Y^{(q)}(u-x)\bigr) e^{-\Phi_X(q)v}
  \Pi_X(du-v) dv 
  \\
  \le 
  \iint \bigl(W_Y^{(q)}(-x)-W_Y^{(q)}(u-y)\bigr) e^{-\Phi_X(q)v}
  \Pi_X(du-v) dv,
\end{multline*}
which appears in the expression of $D^{(q)}$ in \cref{lem:limit_potential}, and is thereby integrable since $q$-resolvent is finite.
\end{proof}

By taking a similar approach as in \cite{noba2024analyticpropertygeneralizedscale},  we prove that $W_{V}^{(q)}(x, y)$ has an analytic extension in $q \in \C$, see \cref{prop:WV-analytic} and we prove similar result for the function $D^{(q)}(x).$
\begin{lemma}\label{lem:Dq_analytic}
Let
\[
\mathcal{D} := \{\, q \in \mathbb{C} : \Re(q+q_X) > \inf \Psi_X \,\}.
\]
Then for each fixed $x\le 0$, the map $q \mapsto D^{(q)}(x)$ has an analytic extension in~$\mathcal{D}$.
\end{lemma}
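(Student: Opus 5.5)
The plan is to exhibit each ingredient of $D^{(q)}(x)$ as an analytic function of $q\in\mathcal D$, and then pass analyticity through the double integral by locally uniform domination and Morera's theorem. Write $z=q+q_X$. The ingredients are:
\begin{itemize}
\item the map $q\mapsto W_Y^{(q+q_Y)}(w)$ for fixed $w$;
\item the map $z\mapsto\Phi_X(z)$;
\item the prefactor $z/\Phi_X(z)$;
\item the integral term $\iint(\cdots)e^{-\Phi_X(z)v}\,\tilde\Pi_X(du,dv)$.
\end{itemize}

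\textbf{Step 1: the scale function of $Y$.} For this I use the standard series representation
\[
W_Y^{(p)}(w)=\sum_{k\ge0}p^k\,W_Y^{*(k+1)}(w),
\]
where $W_Y^{*(k+1)}$ is the $(k+1)$-fold convolution of $W_Y=W_Y^{(0)}$. The series converges locally uniformly in $p\in\C$, so $q\mapsto W_Y^{(q+q_Y)}(w)$ is entire. Each $W_Y^{*(k+1)}$ is non-negative and non-decreasing, which gives, for $u<0$,
\[
\bigl|W_Y^{(p)}(-x)-W_Y^{(p)}(u-x)\bigr|\le W_Y^{(|p|)}(-x)-W_Y^{(|p|)}(u-x).
\]
This is the domination I need for the difference in the integrand.

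\textbf{Step 2: the analytic extension of $\Phi_X$.} Let $\lambda^*=\arg\inf\psi_X$. Under \ref{a:R}, $\psi_X'(0+)<0$, so $\lambda^*>0$ and $\inf\psi_X<0$. I define $\Phi_X(z)$ for $\Re z>\inf\psi_X$ through the first-passage identity
\[
\E\bigl[e^{-z\tau_v^+};\tau_v^+<\infty\bigr]=e^{-\Phi_X(z)v}.
\]
For real $z>\inf\psi_X$ this holds by \cite[equation~(3.15)]{Kyprianou2014}, combined with an Esscher change of measure when $z<0$. That change of measure shows $\E[e^{-(\Re z)\tau_v^+};\tau_v^+<\infty]<\infty$, so the left-hand side is analytic in $z$ on this half-plane. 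Taking $v=1$ and using uniqueness of analytic continuation gives an analytic $\Phi_X$ extending the real one. The pointwise bound $|e^{-z\tau}|\le e^{-\Re z\,\tau}$ then yields
\[
\Re\Phi_X(z)\ge\Phi_X(\Re z)>\lambda^*>0 .
\]
In particular $\Phi_X$ has no zeros in $\mathcal D$, so $z/\Phi_X(z)$ is analytic there. Moreover $|e^{-\Phi_X(z)v}|\le e^{-\epsilon v}$ with $\epsilon=\lambda^*$, uniformly on $\mathcal D$.

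\textbf{Step 3: the integral term.} Fix a compact $K\subset\mathcal D$ and set $p_K=\sup_{q\in K}|q+q_Y|$. For $q\in K$, the integrand $\bigl(W_Y^{(q+q_Y)}(-x)-W_Y^{(q+q_Y)}(u-x)\bigr)e^{-\Phi_X(q+q_X)v}$ is dominated, by Steps 1–2, by
\[
\bigl(W_Y^{(p_K)}(-x)-W_Y^{(p_K)}(u-x)\bigr)e^{-\lambda^* v}.
\]
This is $\tilde\Pi_X$-integrable by \cref{lem:weighted-to-unweighted} applied with $q=p_K$ and $\epsilon=\lambda^*$. The integrand is analytic in $q$ for each fixed $(u,v)$, so dominated convergence gives continuity of the integral on $K$. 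Fubini then gives a vanishing contour integral over every triangle in $K$, and Morera's theorem yields analyticity. Combined with Step 2 for the prefactor, this gives the analytic extension of $q\mapsto D^{(q)}(x)$ on $\mathcal D$.

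The main obstacle is Step 2. One has to justify rigorously both the analytic continuation of $\Phi_X$ to the full half-plane $\Re z>\inf\psi_X$, including the region where $\Re z<0$ and exponential moments of $\tau_v^+$ are needed, and the lower bound on $\Re\Phi_X$. If the Esscher argument is awkward, the fallback is to invert $\psi_X$ directly. Since $\psi_X'(\lambda^*)=0$, the implicit function theorem does not apply there, so I would verify that $\psi_X$ is injective on $\{\Re\lambda>\lambda^*\}$ and maps it onto the half-plane $\{\Re z>\inf\psi_X\}$.
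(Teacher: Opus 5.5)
Your proposal is correct and follows the paper's proof essentially step for step: prefactor plus integral term, domination of the scale-function difference via the convolution series, the uniform bound $|e^{-\Phi_X(q+q_X)v}|\le e^{-\Phi_X(\inf\Psi_X)v}$, integrability from \cref{lem:weighted-to-unweighted}, then Fubini and Morera; your Step~2 in fact supplies the justification for the analytic continuation of $\Phi_X$ and for $\mathrm{Re}\,\Phi_X(z)\ge\Phi_X(\mathrm{Re}\,z)$, which the paper only asserts. The one detail to add is that defining $\Phi_X(z)=-\log F_1(z)$, with $F_v(z)=\E[e^{-z\tau_v^+};\tau_v^+<\infty]$, requires $F_1\neq 0$ on the half-plane, which follows either from $F_1=F_{1/n}^{\,n}$ (strong Markov property) together with $F_{1/n}(z)\to 1$, or directly from the L\'evy--Khintchine representation of the killed subordinator $(\tau_v^+)_{v\ge 0}$.
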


\begin{proof}
Write $D^{(q)}(x)=D^{(q)}_1(x)+D^{(q)}_2(x)$ where
\[
D^{(q)}_1(x) := \frac{q+q_X}{\Phi_X(q+q_X)}\,W_Y^{(q+q_Y)}(-x)
\]
and
\[
D^{(q)}_2(x) :=
\iint 
\Big( W^{(q+q_Y)}_Y(-x) - W^{(q+q_Y)}_Y(u-x) \Big)
e^{-\Phi_X(q+q_X)v}\, \tilde{\Pi}_X(du, dv).
\]
The function $\Phi$ is the Laplace exponent of a subordinator, which is analytic where finite;
and $q \in \mathcal{D}$ is equivalent
to $\Phi_X(q+q_X)$ being finite and positive.
Moreover, $q\mapsto W_Y^{(q+q_X)}(z)$ has an analytic extension
for each $z$ by Lemma $4$-(ii) in \cite{Bertoin1997}.
This shows that $q \mapsto D^{(q)}_1(x)$ has an analytic extension on~$\mathcal{D}$.

We turn our attention to $D^{(\cdot)}_2(x)$.
Let $\gamma$ be a closed piecewise smooth curve contained in $\mathcal{D}$.  
We will show that $\int_\gamma D^{(q)}_2(x)\, dq =0$.
Consider
\begin{equation} \label{eq:D2-integral}
\int_\gamma D^{(q)}_2(x)\, dq
=
\int_\gamma 
\iint 
\Big( W_Y^{(q+q_Y)}(-x)-W_Y^{(q+q_Y)}(u-x)\Big)
e^{-\Phi_X(q+q_X)v}
\tilde{\Pi}_X(du, dv)\, dq.
\end{equation}
We first control the integrand uniformly in $q$.
Using the analytic extension of scale functions (see Lemma 8.3 in \cite{Kyprianou2014}), for $y, a \in \R,$
\[
W_Y^{(q+q_Y)}(y)-W_Y^{(q+q_Y)}(y-a)
=
\sum_{k\ge 0}(q+q_Y)^k
\Big(W_Y^{*(k+1)}(y)-W_Y^{*(k+1)}(y-a)\Big).
\]
Let $int(\gamma)$ denote the interior of the bounded connected component formed by the closed curve $\gamma$ and define
\[
Q := \sup\{\, |q| : q\in \mathrm{int}(\gamma)\,\}.
\]
Then (using the fact that the convolution of non-negative increasing functions remains increasing),
\begin{align}
\big|W_Y^{(q+q_Y)}(y) - W_Y^{(q+q_Y)}(y-a)\big|
&\le
\sum_{k\ge 0} (Q+q_Y)^k
\Big(W_Y^{*(k+1)}(y)-W_Y^{*(k+1)}(y-a)\Big)
\nonumber \\
&=
W_Y^{(Q+q_Y)}(y)-W_Y^{(Q+q_Y)}(y-a).
\label{eq:mono-diff}
\end{align}
By \cref{lem:weighted-to-unweighted},
\[
\iint 
\Big(
W_Y^{(Q+q_Y)}(-x) - W_Y^{(Q+q_Y)}(-x+u)
\Big)e^{-\Phi_X(\inf \Psi_X)v}
\tilde{\Pi}_X(du, dv)
<\infty.
\]
Thus, recalling that $\gamma$ is contained in $\mathcal{D}$,
the integrand in \eqref{eq:D2-integral} is bounded in absolute value by an integrable function
uniformly in $q \in \mathrm{int}(\gamma)$. It also follows from dominated convergence
that $q\mapsto D_2^{(q)}(x)$ is continuous on $\mathcal D$.
Fubini's theorem implies that
\[
\int_\gamma D^{(q)}_2(x)\, dq
=
\iint 
\left( \int_\gamma 
\big( W_Y^{(q+q_Y)}(-x)-W_Y^{(q+q_Y)}(u-x)\big)
e^{-\Phi_X(q+q_X)v}dq
\right)
\tilde{\Pi}_X(du, dv).
\]

For each fixed $(u,v)$, the function
\[
q \mapsto 
\big( W_Y^{(q+q_Y)}(-x)-W_Y^{(q+q_Y)}(u-x)\big)
e^{-\Phi_X(q+q_X)v}
\]
is analytic on $\mathcal{D}$; hence its integral around $\gamma$ is zero.  
Therefore,
$\int_\gamma D^{(q)}_2(x)\, dq=0$,
and by Morera’s theorem, $D^{(\cdot)}_2(x)$ has analytic extension in~$\mathcal{D}$. Hence, proved.
\end{proof}

\begin{lemma}\label{lem:Dq_bound}
Fix $\inf \Psi_X-q_X<-Q < 0$ and recall the function $A$ appearing in \cref{lem:weighted-to-unweighted}. Then,
\[
|D^{(q)}(x)| \leq g(Q)W_Y^{(Q+q_Y)}(-x)+ A(Q+q_Y,\Phi_X(\inf\Psi_X), x),
\qquad -Q\le q<0, \, x\le 0,
\]
where $g$ is a positive function, and the right-hand side is integrable on $(b,0)$.
Moreover,
\[
  |W_{V}^{(q)}(x, y)| \leq W_{V}^{(Q)}(x, y),
  \qquad
  -Q\le q<0, \, x\in \R , \, y < 0.
\]
and the right-hand side is integrable in $y$ on $(b,0)$.
\end{lemma}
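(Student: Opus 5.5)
We bound the two summands $D_1^{(q)}(x)$ and $D_2^{(q)}(x)$ of $D^{(q)}$ from the proof of \cref{lem:Dq_analytic} separately, for real $q\in[-Q,0)$. The one tool is the series representation $W_Y^{(r)}=\sum_{k\ge0} r^k W_Y^{*(k+1)}$, together with the fact that each convolution power $W_Y^{*(k+1)}$ is non-negative and increasing. Since $|q+q_Y|\le Q+q_Y$, this gives
\[
  |W_Y^{(q+q_Y)}(z)|\le W_Y^{(Q+q_Y)}(z).
\]
It also gives the difference bound \eqref{eq:mono-diff} with $Q+q_Y$ in place of $q+q_Y$.

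For $D_1$ we take
\[
  g(Q)=\sup_{-Q\le q<0}\frac{|q+q_X|}{\Phi_X(q+q_X)}.
\]
This is finite and positive. Indeed, $q+q_X\ge q_X-Q>\inf\Psi_X$, so $\Phi_X(q+q_X)$ is bounded below on the compact range by $\Phi_X(q_X-Q)$. This quantity is positive: assumption \ref{a:R} gives $\Phi_X>0$ wherever $\Phi_X$ exceeds the minimiser of $\psi_X$, and this holds in the region $\mathcal D$, as noted in \cref{lem:Dq_analytic}. The numerator is bounded by $Q+q_X$. Hence $|D_1^{(q)}(x)|\le g(Q)W_Y^{(Q+q_Y)}(-x)$.

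For $D_2$ we use \eqref{eq:mono-diff} on the bracket. We bound the exponential using monotonicity of $\Phi_X$: since $q+q_X\ge\inf\Psi_X$, we have $\Phi_X(q+q_X)\ge\Phi_X(\inf\Psi_X)$, so
\[
  e^{-\Phi_X(q+q_X)v}\le e^{-\Phi_X(\inf\Psi_X)v}.
\]
Here $\Phi_X(\inf\Psi_X)$ is the minimiser of $\psi_X$, which is strictly positive by \ref{a:R}. Combining these gives $|D_2^{(q)}(x)|\le A(Q+q_Y,\Phi_X(\inf\Psi_X),x)$, and the first inequality follows.

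Integrability on $(b,0)$ follows as stated. The term $W_Y^{(Q+q_Y)}(-x)$ is continuous, hence bounded on $[b,0]$. The term $A$ is integrable by \cref{lem:weighted-to-unweighted}, applied with $\epsilon=\Phi_X(\inf\Psi_X)>0$.

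For $W_V^{(q)}$ we treat the case $x\le0$ first. There $W_V^{(q)}(x,y)=W_Y^{(q+q_Y)}(x-y)$, so the series bound gives $|W_V^{(q)}(x,y)|\le W_Y^{(Q+q_Y)}(x-y)$. For $x>0$ we use the defining formula. The products $W_X^{(q+q_X)}W_Y^{(q+q_Y)}$ expand as double power series in $q$ with non-negative coefficients, after writing $q+q_X$ and $q+q_Y$ in terms of $q$ and expanding. The integral term is handled as in the proof of \cref{lem:Dq_analytic}: rewrite the integrand as a sum of differences of non-negative increasing functions, each bounded termwise as in \eqref{eq:mono-diff}. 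The prefactor $(q+q_X)/\Phi_X(q+q_X)$ and $e^{-\Phi_X(q+q_X)v}$ are bounded as above.

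The main obstacle is the statement's claim of exactly $W_V^{(Q)}$ as the dominating function. The natural route is to expand $W_V^{(q)}(x,y)=\sum_k q^k c_k(x,y)$, an entire function of $q$ by \cref{prop:WV-analytic}, and show $c_k\ge0$. Then $|W_V^{(q)}|\le\sum_k|q|^k c_k=W_V^{(|q|)}\le W_V^{(Q)}$ for $|q|\le Q$. Note the label $W_V^{(Q)}$ refers to parameter $+Q$, which is consistent with $|q|\le Q$.

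Non-negativity of $c_k$ is what I would try to establish first, probabilistically. At $q\ge0$, the quantity $W_V^{(q)}(x,y)$ can be read through \cref{exist_resolvent} as a resolvent-type quantity. Its derivatives in $q$ then correspond to moments of occupation times, which suggests a Neumann-series, or "$q$-convolution", identity
\[
  W_V^{(q)}=W_V^{(0)}+q\,W_V^{(0)}\ast W_V^{(q)},
\]
analogous to the one for $W^{(q)}$, obtained by a strong Markov or resolvent argument. Iterating this identity would give $c_k\ge0$.

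Integrability of $W_V^{(Q)}(x,\cdot)$ on $(b,0)$ follows from its representation through resolvent densities. By \cref{lem:limit_potential}, $W_V^{(Q)}(x,y)$ is bounded by $\frac{D^{(Q)}(y)}{D^{(Q)}(b)}W_V^{(Q)}(x,b)$, since $r_b^{(Q)}\ge0$. The function $D^{(Q)}$ is integrable on $(b,0)$ by \cref{lem:weighted-to-unweighted}.
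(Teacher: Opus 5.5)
Your proposal is correct and follows the paper's proof. You use the same split $D^{(q)}=D_1^{(q)}+D_2^{(q)}$, bounded via \eqref{eq:mono-diff} and the monotonicity of $\Phi_X$, and for $W_V^{(q)}$ the power series of \cref{prop:WV-analytic}. That proposition already gives the coefficients as the non-negative convolution powers $W_V^{*(n+1)}$ (the identity you plan to derive is exactly \eqref{eq:WV-Volterra}), which yields $|W_V^{(q)}|\le W_V^{(|q|)}\le W_V^{(Q)}$. Two of your details improve on what the paper leaves implicit: the absolute value in $g(Q)$, and the integrability of $W_V^{(Q)}(x,\cdot)$ obtained from $r_b^{(Q)}\ge 0$ together with the integrability of $D^{(Q)}$.
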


\begin{proof}
We use the same decomposition in \cref{lem:Dq_analytic}, namely $D^{(q)}(x)=D^{(q)}_1(x)+D^{(q)}_2(x)$, and begin with $D_1^{(q)}(x)$.

\citet[Lemma~4(iii)]{Bertoin1997} gives the bound
$|W_Y^{(q+q_Y)}(-x)|\leq W^{(Q+q_Y)}_Y(-x)$.
By Assumption \ref{a:R}, $\Phi_X$ is positive and continuous on $[-Q,\infty)$,
so $g(Q) \coloneqq \sup\bigl\{ \frac{q+q_X}{\Phi_X(q+q_X)} : -Q \le q < 0 \bigr\}$
is finite. This completes the bound on $D_1^{(q)}(x)$.

We turn to $D_2^{(q)}(x)$.
Again using the comparison in \eqref{eq:mono-diff} and monotonicity of $\Phi_X$, we obtain
\begin{align*}
    |D^{(q)}_2(x)| \leq
&\int 
\Big| W^{(q+q_Y)}_Y(-x) - W^{(q+q_Y)}_Y(u-x) \Big|e^{-\Phi_X(q+q_X)v}
\, \tilde{\Pi}_X(du\, dv)\\
& \leq \int 
\Big(
W_Y^{(Q+q_Y)}(-x) - W_Y^{(Q+q_Y)}(-x+u)
\Big)e^{-\Phi_X(\inf \Psi_X)v}
\tilde{\Pi}_X(du\, dv)
\\&= A(Q+q_Y, \Phi_X(\inf\Psi_X), x),
\end{align*}
which is finite by \cref{lem:weighted-to-unweighted}. This completes the bound on
$D_2^{(q)}(x)$.

Finally we consider $W_V^{(q)}(x, y)$. This follows from the analytic extension in \cref{prop:WV-analytic}, that is
\begin{align}
    |W_V^{(q)}(x, b)| \leq  \sum_{n\geq 0} |q|^n W^{*(n+1)}_V(x, b).
\end{align}
Considering the range of $q$ produces the required bound.
\end{proof}

\section{Irreducibility properties}
\label{s:irreducibility-properties}

Our goal in this section and the next is to apply  the $R$-theory developed in \citet{TuominenTweedie1979} in order to
characterize the quasi-limiting behavior of the process $V$ killed upon first passage below $b$. To this end, we
establish \emph{Lebesgue irreducibility} and \emph{simultaneous Lebesgue
irreducibility} of the associated transition semigroup. The definitions of these properties are recalled from from \cite{TuominenTweedie1979}.

\begin{definition}[(simultaneous) $\varphi$-irreducibility]
  Let $(S,\mathcal{S})$ be a measurable space, $\varphi$ be a non-trivial,
$\sigma$-finite measure on $(S,\mathcal{S})$ and 
$P = (P_t)_{t\ge 0}$ a transition semigroup on $(S,\mathcal{S})$.
\begin{enumerate}
\item $P$ is called \emph{$\varphi$-irreducible} if,
for every $x \in S$ and every measurable set $A \in \mathcal{S}$ satisfying
$\varphi(A) > 0$,
\[
\int_0^\infty P_t(x,A)\,dt > 0.
\]
\item 
$P$ is called \emph{simultaneously $\varphi$-irreducible} if, for
every $h>0$, the associated $h$-skeleton chain $(P_{nh})_{n\ge 0}$ is
$\varphi$-irreducible; that is, for every $x\in S$ and every
$A\in\mathcal{S}$ with $\varphi(A)>0$,
\[
\sum_{n=1}^{\infty} P_{nh}(x,A) > 0.
\]
\end{enumerate}
\end{definition}
In the following result we write $H_y = \inf\{ t\ge 0: U_t=y\}$
for the first hitting time of $y$ by $V$.

\begin{lemma}\label{irreducibility}
Under Assumption \ref{a:A1}, for all $x,y>b$,
\[
  \mathbb{P}_x\big( H_y < \tau_b^-\wedge \zeta \big) > 0.
\]
\end{lemma}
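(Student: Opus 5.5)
We prove that every $y>b$ can be reached from every $x>b$ with positive probability, before $\tau_b^-$ and before the state-dependent killing $\zeta$. Killing at the bounded rate $\kf\le q_X\vee q_Y$ only multiplies probabilities of events on $[0,T]$ by at least $e^{-(q_X\vee q_Y)T}$. So it suffices to show that $U$ hits $y$ before $\tau_b^-$ with positive probability, within some bounded time. We chain elementary moves using the strong Markov property, splitting according to the signs of $x$ and $y$.

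\emph{Upward moves and hitting points from below.} Since $X$ and $Y$ are spectrally negative and not monotone decreasing, each creeps upward: it passes upward through any level continuously, with positive probability before any prescribed small downward excursion. For $x<y\le 0$, the process $U$ behaves like $Y$ until it exceeds $0$. We use
\[
\P_x(T_y^+<T_b^-)=\frac{W_Y(x-b)}{W_Y(y-b)}>0
\]
for $Y$, together with continuity of upward passage, so that $U$ hits $y$ before going below $b$. For a target $y>0$ starting from $x\le 0$, we first hit $0$ this way. From $0$, the Noba--Yano construction gives positive probability of entering $(0,\infty)$. Equivalently, from a point slightly below $0$ we reach levels above $0$ by creeping through $0$. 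Once above $0$, the process behaves like $X$, and $\P_z(T_y^+<T_0^-)>0$ for $0<z<y$ moves it up to $y$. Hitting $y$ exactly uses creeping upward again. The delicate point is how $U$ leaves $0$. If $X$ has bounded variation with drift towards $0$, the process may not enter the positive side immediately. Since $Y$ creeps upward, however, the set of times spent above $0$ is nonempty with positive probability. I would handle this through the two-sided exit formula of \cref{two-sided-exit}: it gives $\P_x(\tau_a^+<\tau_b^-\wedge\zeta)=W_V(x,b)/W_V(a,b)>0$ for all $a>0$, using $W_V(x,b)>0$ for $x>b$. Because paths are upward skip-free, $\tau_a^+$ is a continuous passage, so every level $a$ above the start is hit exactly. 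This single formula covers all upward targets, $y>x$, at once.

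\emph{Downward moves.} Suppose $b<y<x$. This is where \ref{a:A1} and \ref{a:R} enter, and I expect it to be the main obstacle. If $x>0$ and $y>0$, then \ref{a:R} makes $X$ drift to $-\infty$. Hence, started at $x$, the process $U$ passes below $y$ with probability one. It may do so by a jump, landing at some point in $(b,y)$ or below $b$. On the event that it lands in $(b,y)$, we then climb back to $y$ using the upward step. The task is therefore to guarantee positive probability of landing in $(b,y)$, or more generally of reaching $(b,0]$ without passing below $b$, and then climbing.

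\emph{Reaching $(b,0]$ without passing below $b$.} If $X$ creeps downward, we reach $0$ continuously. Otherwise the first passage below $0$ is by a jump. Under \ref{a:A1}(a), $\Pi_X$ charges a neighbourhood of $\ell<b$. Choose a starting level just above $0$ such that such a jump lands in $(b,0)$. The jump must have size close to $\ell$ and start from $z\in(0,\ell^{-}+\ldots)$, where $z+\ell\in(b,0)$. Since $\ell<b$, we need $z>b-\ell>0$, and the upward moves let us place $U$ at such a $z$ first. Under \ref{a:A1}(b), we may instead reach $0$ or slightly below it. One subtle case remains: every overshoot of $X$ could land below $b$. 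In that case we use Lévy-system compensation to exhibit positive probability that $U$ is at level $z$ just after time $0$ and a jump of size in a neighbourhood of $\ell$ occurs before $U$ moves far. Once $U$ is in $(b,0]$, the process follows $Y$, and $\Pi_Y$ charges a neighbourhood of $\ell\in(b,0)$. Jumps from near $0$ then move it downward within $(b,0)$, and repeated small moves combined with climbing reach any $y\in(b,0)$ from above. Finally, we combine the pieces with the strong Markov property to conclude $\P_x(H_y<\tau_b^-\wedge\zeta)>0$.
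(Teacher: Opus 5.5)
\textbf{There is a genuine gap in the downward moves.} Your upward step is the paper's: \cref{two-sided-exit}, $W_V(\cdot,b)>0$ and upward skip-freeness handle $x\le y$. You also correctly reduce $y<x$ to getting $U$ into $(b,y)$ before $\tau_b^-$ and then climbing. The gap is in how you get into $(b,y)$.

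\emph{First gap: case \ref{a:A1}\ref{a:A1a} with $y<0$.} Under \ref{a:A1}\ref{a:A1a} alone you aim the $X$-jump near $\ell$ only into $(b,0)$. For a target $y\in(b,0)$ you then descend further using a point of $(b,0)$ charged by $\Pi_Y$. That is \ref{a:A1}\ref{a:A1b}, which need not hold. For instance, if $b=-1$ and $\Pi_Y=\delta_{-100}$, every jump of $U$ from $(b,0)$ kills it, so landing in $[y,0]$ is useless. The missing idea, which is the paper's argument, is to aim the $X$-jump directly into $(b,y)$:
\begin{itemize}
\item take $0<2\varepsilon<y-b$ with $\Pi_X(\ell-\varepsilon,\ell+\varepsilon)>0$;
\item launch from a level $z$ inside $(b-\ell+\varepsilon,\,y-\ell-\varepsilon)$, which lies in $(0,\infty)$ because $\ell<b$;
\item hit $z$, keep $U$ inside this window for a short time by right-continuity, and let a jump of size in $(\ell-\varepsilon,\ell+\varepsilon)$ occur meanwhile.
\end{itemize}
Then $U$ lands in $(b,y)$.

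\emph{Second gap: coming down from a high starting point.} Both branches need $U$ to come down from a high $x$, and you never establish this. Your phrase \emph{the upward moves let us place $U$ at such a $z$} works only when $x<z$. Under \ref{a:A1}\ref{a:A1b}, \emph{we may instead reach $0$ or slightly below it} is unsupported. Under \ref{a:W}, $X$ cannot creep downward, so every passage below a positive level is by a jump, and that jump may overshoot $b$. Your \emph{subtle case} is also misdirected there: at positive levels $U$ jumps according to $\Pi_X$, whereas in \ref{a:A1}\ref{a:A1b} the point $\ell$ is only known to lie in $\supp\Pi_Y$.

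What is needed is the pure Lévy-process fact, which the paper invokes in both branches: for $x>c>0$, $\P_x(T_c^-<T_b^-)>0$. One way to prove it is a ladder argument with any $\ell'\in\supp\Pi_X$. This support is nonempty, since $X$ has no Gaussian part and is not a pure drift.
\begin{itemize}
\item While $U>c-\ell'$, a jump near $\ell'$ occurring before $U$ moves far lowers $U$ by about $\lvert\ell'\rvert$ and, up to small errors, keeps it above $c$, hence above $b$.
\item After finitely many such steps $U$ is below $c-\ell'$. Climb if necessary into the window $(b-\ell',c-\ell')$; one more such jump then lands $U$ in $(b,c)$.
\end{itemize}
Alternatively, use the compensation formula with the potential density of $X$ killed below $c$.

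With this fact and the retargeted jump above, both of your branches go through.
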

\begin{proof}
The proof substantially follows \citet[Proposition 1]{Bertoin1997}, and in part
makes use of that result directly.

To begin with, we note that since $\zeta$ occurs at finite rate, we may retain or
omit it (i.e., consider $V$ or $U$)
from the probability under consideration without changing the question of
positivity, and we so freely in what follows.

Suppose first that $x\le y$. Then
\begin{align}\label{upward_passage_positivity}
  \P_x(H_y<\tau_b^-\wedge \zeta)
  = \frac{W_V^{(0)}(x,b)}{W_V^{(0)}(y,b)} > 0,
\end{align}
since $W_V^{(0)}(\cdot, b)$ is strictly increasing.
For $y<x$, it is sufficient to show that $\P_x(\tau_y^-<\tau_b^-) > 0$.

When $0<y<x$, it is clear that $\P_x(\tau_y^-<\tau_b^-) > 0$, since this reduces
to a question about the Lévy process $X$.

Assume now that, in Assumption (A1), there exists some $b<\ell<0$ in the support
of $\Pi_Y$. When $y<x<0$, \cite[Proposition~1]{Bertoin1997} shows that
\[
  \P_x(H_y<\tau_b^-) \ge \P_x(H_y < \tau_b^-\wedge \tau_0^+) > 0,
\]
since the latter probability concerns just the Lévy process $Y$.
On the other hand when $y<0<x$, we note that by considering just $X$,
$\P_x(\tau_0^- < \tau_b^-)>0$, and on this event we
apply the Markov property at $\tau_0^-$ and reduce to the preceding case.
This concludes the proof under one of the alternatives of Assumption (A1).

Now assume the other alternative of Assumption (A1):
there exists some $\ell<b$ in the support of
$\Pi_X$, and fix $y<x$ with $y<0$, which is the case that remains open.
$\Pi_X(\ell-\varepsilon,\ell+\varepsilon)>0$
for some $0<\varepsilon<y-b$.
Notice that $y-\ell-\varepsilon/2>0$.
Since we have already proved the result when starting from and hitting
points above $0$, we know that
$\P_x(H_{y-\ell-\varepsilon/2} < \tau_b^-)>0$.
On this event, using the a.s.\ right-continuity of paths at \(H_{y - \ell - \varepsilon/2}\), 
for some \(\delta > 0\) we have with positive probability that
\[
  U_s \in (y - \ell - \varepsilon,\, y - \ell), \qquad 
  s \in [H_{y - \ell - \varepsilon/2},\, H_{y - \ell - \varepsilon/2} + \delta].
\]
Within this short interval of time, the process may, again with positive probability,
experience a jump
with size in $(\ell-\varepsilon,\ell+\varepsilon)$ with positive probability.
On this event, \(U\) lands below level \(y\), but above \(b\), before exiting the interval \((b, \infty)\). This completes the proof.
\end{proof}

\begin{proposition}\label{lebesgue_irr}
Under Assumption \ref{a:A1}, the process $V$ killed upon first passage below $b$ is Lebesgue irreducible.
\end{proposition}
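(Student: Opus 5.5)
To prove Lebesgue irreducibility, I will fix $x>b$ and a Borel set $A\subset(b,\infty)$ of positive Lebesgue measure. The goal is to show that
\[
  \int_0^\infty Q_t(x,A)\,dt = \E_x\Bigl[\int_0^{\tau_b^-\wedge\zeta}\Ind_A(V_t)\,dt\Bigr] > 0 .
\]
Since $\kf\le q_X\vee q_Y$, the factor $e^{-\int_0^t \kf(U_s)ds}$ only multiplies path functionals by a strictly positive weight. So it suffices to show that the expected occupation time of $A$ before $\tau_b^-$ is positive, and this can equally be checked with discounting at some rate $q>0$.

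The first step is to localise using \cref{lem:limit_potential}. Its $q$-resolvent density $r_b^{(q)}(x',y)$ is explicit, and the occupation integral starting from $x'$ equals $\int_A r_b^{(q)}(x',y)\,dy$. I will show that $r_b^{(q)}(y,y)>0$ for every $y>b$, and that $r_b^{(q)}(y,\cdot)$ is positive on a neighbourhood of $y$ for Lebesgue-a.e.\ $y$.

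For $y>0$, this means checking
\[
  \frac{e^{-\Phi_X(q+q_X)y}}{D^{(q)}(b)}W_V^{(q)}(y,b) > W_X^{(q+q_X)}(0).
\]
The cleanest route is probabilistic rather than computational. By the strong Markov property, the density at the starting point is $r_b^{(q)}(y,y)=\E_y[\text{local-time type quantity}]$. I therefore prefer to avoid densities and argue directly from paths, as follows.

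Since $\varphi(A)>0$, Lebesgue density gives a point $y\in(b,\infty)\setminus\{0\}$ and $\varepsilon>0$ with $(y-\varepsilon,y+\varepsilon)\subset(b,\infty)$ and $\mathrm{Leb}(A\cap(y-\varepsilon,y+\varepsilon))>0$. \cref{irreducibility} gives $\P_x(H_y<\tau_b^-\wedge\zeta)>0$. By the strong Markov property at $H_y$, it then suffices to show
\[
  \E_y\Bigl[\int_0^{\tau}\Ind_A(V_t)\,dt\Bigr]>0,
\]
where $\tau$ is the exit time from $(y-\varepsilon,y+\varepsilon)\cap\{\text{side of }0\text{ containing }y\}$. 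Up to $\tau$, the process is a killed Lévy process, $X$ if $y>0$ and $Y$ if $y<0$. Its occupation measure on the interval has the density $r^{(q)}(y,z)=\frac{W^{(q)}(y-c)}{W^{(q)}(d-c)}W^{(q)}(d-z)-W^{(q)}(y-z)$ (the classical Lévy analogue of \cref{exist_resolvent}), with endpoints $c<d$.

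This density is strictly positive for $z\in(c,d)$. Indeed, by the two-sided exit identity it equals $W^{(q)}(d-z)\bigl[\P\text{-ratio from }y\bigr]-W^{(q)}(y-z)$, which is positive because the process started at $y$ reaches a neighbourhood of $z$ with positive probability. The cleanest justification is the monotonicity $W^{(q)}(y-c)W^{(q)}(d-z)>W^{(q)}(d-c)W^{(q)}(y-z)$, which follows from \eqref{eq:exit-problem-monotonicity}-type strict inequalities for exit probabilities and uses $W$ strictly positive on $(0,\infty)$. Integrating this positive density over $A\cap(c,d)$ gives a positive quantity, which completes the argument.

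The main obstacle is the strict positivity of the interval resolvent density for $X$ and $Y$ individually. This is where Bertoin used absolute continuity of the transition laws. Here, instead, I will rely only on the explicit form of the density in terms of scale functions, which are strictly positive and strictly increasing on $(0,\infty)$. Showing that the strict inequality holds for almost every $z$ is enough, and I expect this to follow from the two-sided exit formula applied at the point $z$ itself.
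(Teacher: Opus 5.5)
\textbf{The gap: strict positivity of the local density below the start.} The step that fails is the one you flag as the main obstacle. The occupation density $r^{(q)}(y,z)=\frac{W^{(q)}(y-c)}{W^{(q)}(d-c)}W^{(q)}(d-z)-W^{(q)}(y-z)$ of the Lévy process killed on leaving a short interval $(c,d)$ is \emph{not} strictly positive on $(c,y)$ in general, not even almost everywhere.

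For $c<z<y$, the strict inequality $W^{(q)}(y-c)W^{(q)}(d-z)>W^{(q)}(d-c)W^{(q)}(y-z)$ is equivalent, by the two-sided exit identity, to $\P_y(\tau_z^-<\tau_d^+<\tau_c^-)>0$. That is, the process must get below $z$ and climb back to $d$ without ever jumping below $c$. The weak inequality \eqref{eq:exit-problem-monotonicity} does not become strict for free, and \ref{a:A1} says nothing about small jumps.

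Here is a concrete counterexample. Let $Y_t=t-N_t$ with $N$ a Poisson process of rate $\lambda$, take $b<-1$ (so \ref{a:A1}\ref{a:A1b} holds with $\ell=-1$), and take $d-c<1$. Started from $y$, the process drifts upward until it either reaches $d$ or its first jump throws it below $c$. It therefore never visits $(c,y)$, and $r^{(q)}(y,\cdot)\equiv 0$ there. Equivalently, $W^{(q)}(u)=e^{(q+\lambda)u}$ on $[0,1)$, so your product inequality is an equality. Your fallback ``strict inequality for almost every $z$'' is therefore also false. This is exactly why the paper works on a long interval $(b,L)$, with $L$ chosen large in terms of $\supp\Pi_X$: there \cref{irreducibility} does give $\P_x(\tau_y^-<\tau_L^+<\tau_b^-\wedge\zeta)>0$.

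\textbf{The repair.} The fix is cheap and keeps your structure. For $z\in(y,d)$ you have $W^{(q)}(y-z)=0$, so $r^{(q)}(y,z)=W^{(q)}(y-c)W^{(q)}(d-z)/W^{(q)}(d-c)>0$ with no further input, because the process creeps upward.

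So choose $y\neq 0$ and $0<\varepsilon<\min\{|y|,\,y-b\}$ with $\mathrm{Leb}(A\cap(y,y+\varepsilon))>0$. A Lebesgue density point of $A$ works, since then $A$ charges the right half-window $(y,y+\varepsilon)$ for all small $\varepsilon$; positivity of the symmetric window alone is not what you need. Then integrate only over $A\cap(y,y+\varepsilon)$. All downward motion is delegated to \cref{irreducibility}, which is where \ref{a:A1} really enters.

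With that change your argument is a valid and leaner alternative to the paper's proof. The paper instead shows $r^{(0)}_{(b,L)}(x,\cdot)>0$ everywhere for large $L$. Its case $x<y$ is exactly your upward-creeping observation, and its case $x>y$ is the one that needs the long interval.
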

\begin{proof}
It follows from \cref{irreducibility}
that for sufficiently large $L$ and for $x,y\in (b,L)$,
\begin{equation}\label{eq:reachability}
    \mathbb{P}_x\big( H_y < \tau_L^+ \wedge \tau_b^-\wedge \zeta \big) > 0.
\end{equation}
The case, when $x<y$, it is clear that the resolvent density in \cref{exist_resolvent} is positive, since $W_V(x, y), W^{(q_X)}(x-y)$ both are identically zero.

So, we are concerned only with the case when $x>y.$ For the case, when $y<0,$
using this type of event a few times (and noticing that one can hit $y<x$ only
by passing below it first) we obtain
\begin{align}
    0 
    < \P_x\big( \tau_b^-\wedge \zeta >\tau_L^+  > \tau_y^-  \big)
    &= \P_x(\tau_b^-\wedge \zeta> \tau_L^+) - \P_x(\tau_y^- \wedge \zeta> \tau_L^+)
    \nonumber \\
    &= \frac{W_{V}(x, b)}{W_{V}(L, b)} - \frac{W_{V}(x, y)}{W_{V}(L, y)},
    \label{eq:WV-comparison}
\end{align}
with the last line following from the two-sided exit problem, \cref{two-sided-exit}. 

Looking at the resolvent density $r_{(b,L)}^{(0)}$ in \cref{exist_resolvent},
we see that $r_{(b,L)}^{(0)}(x,y) > 0$ for all $b<y<0$.
Similarly, for the case when $y>0,$ we notice that 
\begin{align*}
    \P_x(\tau_y^- \wedge \zeta> \tau_L^+)=\frac{W_X^{(q_X)}(x-y)}{W_X^{(q_X)}(L-y)},
\end{align*}
using Lévy process arguments, and substitute in \eqref{eq:WV-comparison} to obtain
\[
  \frac{W_{V}(x, b)}{W_{V}(L, b)} > \frac{W_X^{(q_X)}(x-y)}{W_X^{(q_X)}(L-y)}.
\]
Comparing again with the resolvent density, we see that $r_{(b,L)}^{(0)}(x,y)>0$
for $x>y>0$.

Consequently, we have $R^{(0)}_{(b,L)}(x,A) > 0$ for the resolvent applied to
any $A$ of positive
Lebesgue measure. Notice that this proves Lebesgue irreducibility of the
process killed on exiting $(b,L)$; this will be used in a subsequent proof.

We note here, for future use, that these arguments are valid
for $L \ge \max\{-\sup(\supp \Pi_X) + b, 0\}$ 
under \ref{a:A1}\ref{a:A1a}, and for
any $L>0$ under \ref{a:A1}\ref{a:A1b}.

Since $L$ can be chosen arbitrarily large, this completes the proof.
\end{proof}

At this stage we are nearly in a position to prove the simultaneous Lebesgue irreducibility of $V$ killed upon first passage. We first require some finer analysis of the underlying Lévy processes. Recall that $T_c^-$ and $T_d^+$ refer to the
first passage times of $X$.

\begin{lemma}\label{lem:positivity_survival_general}
Assume \ref{a:A2}. For any $c<d$, any $x\in (c,d)$ and any time $t>0$,
\[
\mathbb{P}_x\big(T_d^+ \wedge T_c^- > t \big) > 0.
\]
The same result holds with $X$ replaced by $Y$ and \ref{a:A2} replaced by \ref{a:A3}.
\end{lemma}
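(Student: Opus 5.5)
The plan is to decompose $X$ into its small jumps and the rest, and show that with positive probability the path stays in a small neighbourhood of $x$ up to time $t$. Since $X$ has no Gaussian part (for $X$ by \ref{a:W}; for $Y$ a Gaussian part only helps, and we may treat it separately), fix $\eta>0$ with $\eta<\min\{x-c,d-x\}/2$. Then write $X = X^{(\varepsilon)} + J^{(\varepsilon)}$, where $J^{(\varepsilon)}$ is the compound Poisson process of jumps of size below $-\varepsilon$, and $X^{(\varepsilon)}$ is the spectrally negative Lévy process with jumps in $(-\varepsilon,0)$, plus drift. Here $\varepsilon\in(0,\eta)$ will be chosen later. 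The event that $J^{(\varepsilon)}$ has no jumps on $[0,t]$ has probability $e^{-t\Pi_X(-\infty,-\varepsilon)}>0$ and is independent of $X^{(\varepsilon)}$. So it suffices to show $\P_x(\sup_{s\le t}|X^{(\varepsilon)}_s - x| < \eta)>0$.

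The drift of $X^{(\varepsilon)}$ can be written as
\[
  \delta_\varepsilon = \gamma_X - \int_{(-1,-\varepsilon]} w\,\Pi_X(dw)
\]
(up to the compensation convention). There are two regimes. If the paths have bounded variation, the true drift $\mathrm{d}_X=\gamma_X-\int_{(-1,0)}w\,\Pi_X(dw)$ is positive, since $X$ is not a subordinator's negative. In the unbounded variation case, $\int_{(-1,0)}|w|\Pi_X(dw)=\infty$, so we may pick $\varepsilon$ with $\delta_\varepsilon$ anything large positive. In either case the key manoeuvre, and the place \ref{a:A2} enters, is to balance a positive linear drift against many small negative jumps. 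Because $0\in\supp\Pi_X$, for every $\varepsilon$ we have $\Pi_X(-\varepsilon,0)>0$. Then the small-jump part is a genuine martingale-plus-drift whose net displacement can be made close to zero.

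Concretely, I would argue as follows. Choose an interval $I=(-\varepsilon,-\varepsilon')$ of positive $\Pi_X$-mass, with $\varepsilon<\eta/2$. Consider the event on which jumps with sizes in $I$ occur roughly regularly, at spacing about $|w|/\mathrm{d}$ with $w\in I$. Between these jumps, the remaining part (the drift plus jumps in $(-\varepsilon',0)$) moves by less than $\eta/2$ over each short interval. That short-time control follows from right-continuity and stationary increments: $\P(\sup_{s\le h}|Z_s|<\eta/4)>0$ for small $h$ and any Lévy $Z$. Chaining finitely many independent such pieces via the Markov property gives positive probability for a sawtooth path confined to $(x-\eta,x+\eta)$ over $[0,t]$. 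In the bounded variation case with drift $\mathrm{d}>0$, this is direct: the drift goes up and the small jumps bring the path back. If instead $X$ is of unbounded variation, I would use the classical fact (e.g. \cite{Ber-Levy}, or Sato's support theorem) that $0$ is regular for both half-lines. Hence $\P_0(\sup_{s\le t}|X_s|<\eta)>0$ by the support theorem for Lévy processes: the support of the law of the path contains the zero path whenever the process is not monotone.

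The main obstacle is the bounded variation compound-Poisson-with-drift case. There the support of paths is limited, and one genuinely needs jumps arbitrarily small relative to $\eta$, which is exactly \ref{a:A2}. The chaining argument above is what I would write out carefully. The $Y$ statement is identical under \ref{a:A3}, with a possible Gaussian part only enlarging the path support.
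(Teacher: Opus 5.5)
Your route differs from the paper's. It works in substance, but two steps need repair (second paragraph).

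\textbf{Comparison with the paper.} The paper never decomposes $X$. It uses successive returns of $X$ to its starting point $x$ before leaving $(c,d)$. From Bertoin's irreducibility result (his Proposition~1) it takes $h,\epsilon>0$ with $\P_x(H_x\in(h,\infty))>\epsilon$; this is where (A2) enters, since returning to $x$ inside $(c,d)$ needs a downward jump shorter than $d-c$. It then concatenates about $t/h$ such excursions by the strong Markov property, giving survival probability of order $\epsilon^{t/h}$. This is short and reuses point-hitting irreducibility the paper needs anyway. However, it needs successive returns to $x$ to be separated by positive times, i.e.\ $x$ irregular for itself, which is the bounded-variation case. (W) still allows unbounded variation, where $H_x=0$ a.s., so the cycles would have to pass through an auxiliary level. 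Your construction builds a surviving path by hand from the Lévy--Itô decomposition. It needs no hitting of points, shows exactly why (A2) is required (the teeth of the sawtooth must fit inside the window), and, written as below, covers both variation regimes with one argument.

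\textbf{Repairs.} First, you control the remainder only piece by piece and let tooth sizes range over all of $I$. Over $n\approx \mathrm{d}t/|w|$ cycles, the per-cycle discrepancies (drift over a gap minus the tooth, plus the downward push of jumps in $(-\varepsilon',0)$) can add up to order $n\eta$, so Markov chaining alone does not confine the path. Impose the constraints globally. Fix $w_0\in\supp\Pi_X\cap(-\eta/4,0)$ (nonempty by (A2)) and $\varepsilon'<|w_0|/2$, and put $n=\lfloor \mathrm{d}t/|w_0|\rfloor$. Then take $\rho<\min\{|w_0|/2,\ \eta/(4n+4)\}$ and $\delta<\min\{|w_0|,\eta\}/(4\mathrm{d})$. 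Intersect three independent events on $[0,t]$: no jumps in $(-\infty,-\varepsilon']\setminus(w_0-\rho,w_0+\rho)$; exactly $n$ jumps in $(w_0-\rho,w_0+\rho)$, the $k$-th within $\delta$ of $k|w_0|/\mathrm{d}$; and total size of the jumps in $(-\varepsilon',0)$ below $\eta/4$ (positive probability, since this is a driftless subordinator). On this event $|X_s-x|\le |w_0|+\mathrm{d}\delta+n\rho+\eta/4<\eta$ for all $s\le t$.

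Second, the fact you quote for unbounded variation is false as stated. $X_t=t-N_t$ with $N$ a Poisson process is not monotone, yet for $\eta<1/2$ it cannot stay in $(-\eta,\eta)$ beyond time $\eta$; this is exactly the situation (A2) excludes. Also, regularity of $0$ for both half-lines does not by itself control $\sup_{s\le t}|X_s|$ over a fixed time $t$. No support theorem is needed. Rerun the construction with $\mathrm{d}$ replaced by $D=\gamma_X+\int_{(-1,-\varepsilon']}|w|\,\Pi_X(dw)$, which is positive once $\varepsilon'$ is small. Replace the jumps in $(-\varepsilon',0)$ by their compensated martingale $M$; Doob's inequality gives $\P(\sup_{s\le t}|M_s|\ge\eta/4)\le 16\eta^{-2}t\int_{(-\varepsilon',0)}w^2\,\Pi_X(dw)<1$ for small $\varepsilon'$. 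Choose $\varepsilon'$ before $n$, so there is no circularity.

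Lastly, for $Y$ with $\sigma_Y>0$ the non-Gaussian part may be decreasing. So ``a Gaussian part only helps'' should be justified by a Cameron--Martin small-ball bound such as $\P(\sup_{s\le t}|\sigma_Y B_s+cs|<\eta)>0$.
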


\begin{proof}
Fix $c<d$ and $x\in (c,d)$.
Let us write $H_x^{(n)}$ for the $n$-th hitting time of $x$
by $X$ before exiting the interval $(c,d)$, with $H^{(0)}_x = 0$
and $\{H_x^{(n)}=\infty\}$ denoting the event that $X$ leaves the interval before the $n$-th return to $x$. Under Assumption \ref{a:A2}, \cite[Proposition~1]{Bertoin1997} implies that there exist $h, \epsilon>0$ such that $\P_x\left(H_x\in(h, \infty)\right)>\epsilon.$
For every $t>0$,
\begin{align*}
    \P_x(\tau_c^-\wedge \tau_d^+ > t)
    &\geq \P_x\left(H_x^{(n)}-H_x^{(n-1)}\in(h, \infty), \text{ for all } n\leq [t/h]\right)\\
    &\geq \P_x\left(H_x\in (h, \infty)\right)^{[t/h]}>\epsilon^{[t/h]}>0.
\end{align*}
This concludes the proof.
\end{proof}

\begin{lemma}\label{lem:minorization-X}
Suppose Assumptions \ref{a:A2} and \ref{a:A4} hold.
Let \(A\subset(0,\infty)\) be a set of positive Lebesgue measure.
Then for every starting point \(x\geq0\) there exist a time \(t_0=t_0(x,A)>0\),
a function $m^{x,A}\colon (t_0,\infty) \to (0,\infty)$ and a measure
$\mu_A$ on $(0,\infty)$
such that
\[
\mathbb{P}_x\big( X_t \in dz \; ; t<\tau_0^-\big) \ge m^{x,A}(t) \mu_A(dz),
\qquad t > t_0.
\]
Moreover, $\mu_A(A)>0$ and, for any $t_0<t_1<t_2$, $\inf_{[t_1,t_2]} m^{x,A} > 0$.
\end{lemma}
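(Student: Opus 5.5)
Since $A$ has positive Lebesgue measure, I will first find a bounded interval $(a_1,a_2)\subset(0,\infty)$ with $\mathrm{Leb}(A\cap(a_1,a_2))>0$. The idea is to produce a density lower bound on an interval $(c_1,c_2)$ containing a piece of $A$, using one jump whose size has a density. I will take $\mu_A$ to be a multiple of Lebesgue measure restricted to such an interval, so that $\mu_A(A)>0$. By Lebesgue density, I can choose a point $z_0\in A$ together with an arbitrarily small interval around $z_0$ carrying a positive portion of $A$.

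\textbf{Step 1: bring the process near a level $w$ above $z_0$ without exiting.} Pick $w=z_0+\alpha$ with $\alpha\in(\alpha_1,\alpha_2)$ the midpoint, so that a jump of size in $(-\alpha_2,-\alpha_1)$ from near $w$ lands in a neighbourhood of $z_0$ inside $(0,\infty)$ (possibly shrinking the neighbourhood so that $z_0>$ its radius). From $x\ge0$ the process reaches any level above with positive probability before $\tau_0^-$ (upward creeping, since $X$ is spectrally negative; for $x=0$ one uses regularity considerations or first waits a short time, and I would treat $x=0$ via right-continuity and \ref{a:A2}). Then, by \cref{lem:positivity_survival_general} applied to a small interval $(w-\eta,w+\eta)$, the process stays there for any prescribed time with positive probability. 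This gives a time $t_0$ (hitting time bounds) such that, for $s$ in a compact window, $\P_x(X_u\in(w-\eta,w+\eta)\ \forall u\in[s,s+\delta], s<\tau_0^-)$ is bounded below.

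\textbf{Step 2: one jump with density.} Decompose the Lévy measure as $\Pi_X\ge \pi_X(y)\mathbf 1_{(-\alpha_2,-\alpha_1)}(y)\,dy\ge c\,\mathbf 1_{(-\alpha_2,-\alpha_1)}dy$, and write $X$ as an independent sum of a compound Poisson process $J$ with Lévy measure $c\,\mathbf 1_{(-\alpha_2,-\alpha_1)}dy$ and a residual Lévy process $X'$. On the event that $J$ has exactly one jump during $[0,t]$, occurring at a uniformly distributed time, and that $X'$ (which also has $0$ in its Lévy support or drift structure) stays in a small tube before and after, the position at time $t$ is (pre-jump position) $+$ (jump with uniform-type density) $+$ small post-jump displacement. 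Conditioning on everything but the jump size, the law of $X_t$ dominates $c'\,\mathbf 1_{(w-\eta-\alpha_2', \dots)}dz$, i.e.\ a constant times Lebesgue on an interval $I$ around $z_0$, provided $\eta$ and the post-jump displacement are small relative to $\alpha_2-\alpha_1$. After the jump, \cref{lem:positivity_survival_general} (for $X'$, or directly for $X$ using strong Markov and then convolving) keeps the process above $0$ for the remaining time. Set $\mu_A=\mathrm{Leb}|_{I}$ shrunk so $I\subset(0,\infty)$; then $\mu_A(A)>0$.

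\textbf{Step 3: uniformity in $t$.} Define $m^{x,A}(t)$ as the resulting lower bound; each ingredient (probability of reaching near $w$ by a given time, of staying in a tube for time at most $t_2$, of exactly one $J$-jump in a window, of staying positive afterwards for bounded time) is bounded below uniformly for $t\in[t_1,t_2]$ by \cref{lem:positivity_survival_general} and continuity of Poisson probabilities. The main obstacle is Step 2: handling the post-jump motion so that the smearing of the jump density survives, i.e.\ controlling $X_t-X_{\text{jump}}$ within a set of small width uniformly while keeping $t<\tau_0^-$; I would handle it by doing the jump at a time within the last unit interval and applying the Markov property at the jump time together with \cref{lem:positivity_survival_general} on a narrow interval, noting the density lower bound is preserved under averaging over a translation of small size.
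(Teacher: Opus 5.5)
Your proposal follows essentially the same route as the paper. It splits off an independent compound Poisson component whose jump density is bounded below on $(-\alpha_2,-\alpha_1)$ and requires it to make exactly one jump. It holds the residual process, which still satisfies \ref{a:A2}, in a narrow interval via \cref{lem:positivity_survival_general}, and it uses the strong Markov property at a hitting time to move from $x$ to the starting region.

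The post-jump \emph{obstacle} you flag is handled more simply in the paper, with no Markov property at the jump time. The residual is kept in a fixed interval $I$ for the whole of $[0,t]$, and the endpoint is restricted to $[a_1,a_2]$ with $\bar I-\underline I<a_1$, which keeps the path positive after the jump whenever it occurs.

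Two smaller points remain. Your reaching step must also cover $x$ above the target level; the paper invokes \citet[Proposition~1]{Bertoin1997} for this. And no argument can handle $x=0$ when $X$ has unbounded variation, since $0$ is then regular for $(-\infty,0)$. That is a flaw in the statement itself, not in your route, and it is harmless because the lemma is only applied at $x>0$.
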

\begin{proof}
    Let \(A \subset (0, \infty)\) be a set of positive Lebesgue measure. 
Fix a small \(\varepsilon > 0\), and partition \((0, \infty)\) into intervals of length 
strictly less than \(\alpha_2 - \alpha_1\). 
There exists at least one interval in this partition, 
say \([a_1,a_2]\), such that $A \cap [a_1,a_2]$ has positive Lebesgue measure.
By construction, $a_2 - a_1 < \alpha_2 - \alpha_1$,
which implies
\[
0 < a_2 + \alpha_1 < a_1 + \alpha_2,
\]
so the open interval $(a_2 + \alpha_1, \, a_1 + \alpha_2)$
is nonempty. Choose an interval  \( I := (\underline{I}, \overline{I}) \), satisfying
\begin{align}\label{assum-I}
a_2 + \alpha_1 < \underline{I} < \bar{I} < a_1 + \alpha_2
\quad \text{and} \quad 
\bar{I} - \underline{I} < a_1.    
\end{align}
For any $x\ge 0$ and $y \in I$, \citet[Proposition 1]{Bertoin1997} implies that there exists
$t_0 = t_0(x,y,A)$ such that
\begin{equation}\label{eq:minor-1:x-to-y}
  \P_x(H_y < t_0 \wedge \tau_0^-) > 0.
\end{equation}

We decompose the process \(X\) as
\[
X_t = \widetilde{X}_t + J_t,
\]
where $\tilde{X}$ and $J$ are independent, with $\tilde{X}_0=y$ and $J_0=0$;
\(J\) is a compound Poisson process
whose Lévy measure is the restriction of \(\Pi_X\) to
\([-\alpha_2,-\alpha_1]\).

Let \(N_t^J\) denote the number of jumps of \(J\) up to time \(t\).
Let $\delta = \inf_{u \in (-\alpha_2,-\alpha_1)} \pi_X(u)$,
which is positive by Assumption \ref{a:A4}.
For $y\in I$, \(z \in [a_1,a_2]\) and $t>0$, we compute as follows,
using the fact that $\bar{I}-\underline{I}<a_1$ in the first line:
\begin{align}
\mathbb{P}_y(X_t \in dz\; , \tau_0^->t)
&\ge \mathbb{P}_y\!\left( \tilde{X}_t + J_t \in dz \,,\, N_t^J = 1\;, \tilde{\tau}_I>t \right) \nonumber \\
&\ge \int_I \mathbb{P}_y(\tilde{X}_t \in du\; , \tilde{\tau}_I>t)\, \P(N^J_t=1)\frac{\Pi_X(dz - u)}{\Pi_X([-\alpha_2, -\alpha_1])} \nonumber \\
&= \Pi_X([-\alpha_2, -\alpha_1])te^{-\Pi_X[-\alpha_2, -\alpha_1]t}\int_I \mathbb{P}_y(\tilde{X}_t \in du, \tilde{\tau}_I>t)\, \frac{\Pi_X(dz - u)}{\Pi_X([-\alpha_2, -\alpha_1])}
\nonumber \\
&\ge \delta te^{-\Pi_X[-\alpha_2, -\alpha_1]t}\mathbb{P}_y(\tilde{\tau}_I>t)\, dz.
\label{dubrosin-condition-X-1}
\end{align}
The last line uses \(z - u \in [\alpha_1, \alpha_2]\), which follows from the
construction of $I$.

The Lévy measure of $\tilde{X}$,
$\Pi_X(\cdot \setminus [-\alpha_2, -\alpha_1])$, still satisfies Assumption \ref{a:A2}.
Therefore, by \cref{lem:positivity_survival_general}, the probability that
\(\tilde{X}\) remains within \(I\) up to \(t>0\) can be chosen arbitrarily
large, is also strictly positive, that is,
\begin{equation}\label{eq:tilde-survival-positive}
  \mathbb{P}_y(\tilde{\tau}_I > t) > 0, \qquad y\in I.  
\end{equation}

We now consider starting at $x\ge 0$ not necessarily in $I$.
By the strong Markov property at $H_y$, for every $t>t_0$,
\begin{align*}
\P_x(X_t\in dz;\,t<\tau_0^-)
&\geq
\int_{[0,t_0)}
\P_x(H_y\in ds,\,H_y<\tau_0^-)
\P_y(X_{t-s}\in dz;\,t-s<\tau_0^-).
\end{align*}
Combining this with \eqref{dubrosin-condition-X-1}, we obtain
\begin{equation}\label{dubrosin-condition-X}
    \P_x(X_t\in dz;\,t<\tau_0^-)
    \geq m^{x,A}(t)\mathbf{1}_{A\cap[a_1,a_2]}(z)\,dz
\end{equation}
where
\begin{equation*}
m^{x,A}(t)
=
\delta\int_{[0,t_0)}
(t-s)e^{-\Pi_X[-\alpha_2,-\alpha_1](t-s)}
\P_y(\widetilde\tau_I>t-s)
\P_x(H_y\in ds,\,H_y<\tau_0^-).
\end{equation*}
By \eqref{eq:minor-1:x-to-y} and
\eqref{eq:tilde-survival-positive}, $m^{x,A}(t)>0$ for every $t>t_0$.

Finally, if $t\in[t_1,t_2]$ with $t_1>t_0$, then
$t-s\in[t_1-t_0,t_2]$ for $s\in[0,t_0)$. Therefore,
\begin{align*}
m^{x,A}(t)
&\geq
\delta
\inf_{r\in[t_1-t_0,t_2]}
\bigl(re^{-\lambda r}\bigr)
\P_y(\widetilde\tau_I>t_2)
\P_x(H_y<t_0\wedge\tau_0^-) > 0.
\end{align*}
Since the right-hand side is independent of $t\in[t_1,t_2]$ this concludes the proof.
\end{proof}

\begin{lemma}\label{lem:minorization-Y}
Suppose Assumptions \ref{a:A3} and \ref{a:A5} hold.
Let \(A\subset(b, 0)\) be a set of positive Lebesgue measure.
Then for every starting point \(x\in(b, 0)\) there exists a time \(t_0=t_0(x,A)>0\),
a function $\tilde{m}^{x,A}\colon (t_0,\infty) \to (0,\infty)$ and a measure
$\tilde{\mu}_A$ on $(b, 0)$
such that
\[
\mathbb{P}_x\big( Y_t \in dz \; ; t<\tau_0^+\wedge\tau_b^-\big) \ge \tilde{m}^{x,A}(t) \tilde{\mu}_A(dz),
\qquad t > t_0.
\]
Moreover, $\tilde{\mu}_A(A)>0$ and, for any $t_0<t_1<t_2$, $\inf_{[t_1,t_2]} \tilde{m}^{x,A} > 0$.
\end{lemma}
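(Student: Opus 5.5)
The plan is to mirror the proof of \cref{lem:minorization-X}, adapted to the bounded interval $(b,0)$ and to the density bound near $0$ in \ref{a:A5}.

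\textbf{Step 1: localise $A$ and choose a launch interval.} Since $A\subset(b,0)$ has positive Lebesgue measure, we can partition $(b,0)$ into intervals of length less than $\alpha/2$. Then some $[a_1,a_2]$ with $b<a_1<a_2<0$ satisfies $\mathrm{Leb}(A\cap[a_1,a_2])>0$. We choose a launch interval $I=(\underline I,\bar I)$ with
\[
a_2<\underline I<\bar I<\min\{a_1+\alpha,0\}
\quad\text{and}\quad
\bar I-\underline I \text{ small.}
\]
This guarantees that $z-u\in(-\alpha,0)$ for every $u\in I$ and $z\in[a_1,a_2]$. Such an $I$ exists because $a_2-a_1<\alpha$ and $a_2<0$. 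Because $I$ lies strictly inside $(b,0)$, staying in $I$ automatically means not exiting $(b,0)$.

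\textbf{Step 2: minorisation from a point $y\in I$.} We decompose $Y=\tilde Y+J$, where $J$ is compound Poisson with Lévy measure $\Pi_Y$ restricted to $[-\alpha,-\alpha/2']$. More simply, we take the restriction to $(-\alpha,-\eta)$ for some small $\eta>0$ with $\eta<\underline I-a_2$, so that the measure is finite and $z-u$ still lies in its support region. By \ref{a:A5}, $J$ has density at least $\delta>0$ there.

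Consider the event that $\tilde Y$ stays in $I$ up to time $t$ and $J$ has exactly one jump by time $t$. On this event, before the jump the path of $Y$ stays in $I\subset(b,0)$. The jump lands $Y$ in $[a_1,a_2]\subset(b,0)$.

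The main subtlety is that after the jump $Y$ continues to move, so landing in $dz$ at the jump time is not the same as being in $dz$ at time $t$. I would handle this as in \cref{lem:minorization-X}: write $Y_t=\tilde Y_t+J_t$ with the single jump size $J_t$ independent of $\tilde Y_t$. On $\{\tilde\tau_I>t,\ N^J_t=1\}$, every pre-$t$ position of $Y$ is either in $I$ (before the jump) or equals $\tilde Y_s+J_t\in I+(-\alpha,-\eta)$ (after it). To keep the latter inside $(b,0)$ we need $I+(-\alpha,-\eta)\subset(b,0)$, which we ensure by first shrinking $\alpha$ to at most $\underline I-b$. This is allowed, since \ref{a:A5} persists for smaller $\alpha$, provided we re-choose the partition length after fixing the shrunken $\alpha$.

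With this arrangement we get
\[
\P_y(Y_t\in dz;\,t<\tau_0^+\wedge\tau_b^-)\ge \delta\, t e^{-\lambda t}\,\P_y(\tilde\tau_I>t)\,\mathbf 1_{A\cap[a_1,a_2]}(z)\,dz,
\]
where $\lambda$ is the total mass of the restricted jump measure. We also have $\P_y(\tilde\tau_I>t)>0$ by \cref{lem:positivity_survival_general}, because $\tilde Y$ still satisfies \ref{a:A3}: removing jumps outside $(-\eta,0)$ does not affect the support near $0$.

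\textbf{Step 3: general starting point $x\in(b,0)$.} Fix $y\in I$. I need $\P_x(H_y<t_0\wedge\tau_0^+\wedge\tau_b^-)>0$ for some $t_0$.

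If $x\le y$, this follows from upward creeping of $Y$ (via the scale function identity for $Y$), together with a finite-time truncation.

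If $x>y$, I would invoke \cite[Proposition~1]{Bertoin1997} for $Y$ on the interval $(b,0)$. Its hypothesis, that $Y$ can reach lower points without leaving the interval, is supplied by \ref{a:A3}, i.e.\ small jumps are available. This step is the main obstacle, because Bertoin's result is stated under his own absolute-continuity hypotheses. If it does not apply directly, I would argue by hand: chain finitely many small jumps from the support near $0$, each followed by upward creeping to hit the target exactly. Positivity of each stage follows from \ref{a:A3} and the two-sided exit formula for $Y$.

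Then the strong Markov property at $H_y$ gives
\[
\tilde m^{x,A}(t)=\delta\int_{[0,t_0)}(t-s)e^{-\lambda(t-s)}\P_y(\tilde\tau_I>t-s)\,\P_x(H_y\in ds,\,H_y<\tau_0^+\wedge\tau_b^-),
\]
with $\tilde\mu_A=\mathbf 1_{A\cap[a_1,a_2]}(z)\,dz$. Since $\tilde m^{x,A}$ is positive and bounded below on any $[t_1,t_2]\subset(t_0,\infty)$, the same estimate as at the end of \cref{lem:minorization-X} completes the proof.
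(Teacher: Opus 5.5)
Your proposal is correct and follows essentially the paper's route: localise $A$, launch a single jump with density bounded below by \ref{a:A5} from a short interval $I$ just above the localised piece of $A$, use \cref{lem:positivity_survival_general} for the survival of $\tilde Y$ in $I$, and reach $I$ from a general $x\in(b,0)$ by upward creeping or \cite[Proposition~1]{Bertoin1997}, exactly as in \cref{irreducibility}. The one slip is that shrinking $\alpha$ to $\underline I-b$ and then re-partitioning is circular, since $\underline I$ depends on the partition. It disappears if you keep your original $[a_1,a_2]$ (with $a_1>b$) and $I$, impose $\bar I-\underline I<a_1-b$ as the paper does, and restrict $J$ to jumps in $(-\min\{\alpha,\underline I-b\},-\eta)$.
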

\begin{proof}
Recall that Assumption~\ref{a:A5} implies
Assumption~\ref{a:A1}, so we are free to use results proved under
the latter condition.
Let \(A \subset (b,0)\) be a measurable set of positive Lebesgue measure. By the Lebesgue density theorem, there exists a density point
\[
a_1 \in A, \qquad b<a_1<\sup A \le 0,
\]
such that $A\cap[a_1,a_2]$ has positive Lebesgue measure for every $a_2>a_1$.
Using \ref{a:A5}, let $0<\gamma_1<\gamma_2$ be such that $\delta\coloneqq \inf_{[-\gamma_2,-\gamma_1]}\pi_Y>0$
and $a_1+\gamma_2<0$.
Now take any $a_2$ with the property that
\[
  a_1 < a_2 < a_1+\gamma_2-\gamma_1.
\]
With these conditions in place, choose an interval $I = (\underline{I},\bar{I}) \subset (b,0)$
such that $\bar{I}-\underline{I}<a_1-b$ and 
\[
  a_2+\gamma_1 \le \underline{I} < \bar{I} \le a_1+\gamma_2.
\]
Having established the existence of intervals $I$ and $[a_1,a_2]$ with these properties,
the proof proceeds almost identically to that of \cref{lem:minorization-X}
(albeit using \cref{irreducibility} to go from starting points in $I$ to ones in $(b,0)$)
and therefore we omit it.
We note that the difference between the proofs lies in the fact that $a_1$ and $\gamma_2$
depend on the set $A$, which explains the fact that \ref{a:A5} is stronger than
the corresponding assumption on $X$. \qedhere
\end{proof}

\begin{proposition}\label{sim-leb-irr}
    Under Assumptions (A2--5), the process $V$ killed upon going below $b$ is simultaneously Lebesgue irreducible.
\end{proposition}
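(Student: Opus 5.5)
Fix $h>0$, $x>b$ and a Borel set $A\subset(b,\infty)$ of positive Lebesgue measure. The goal is $\sum_{n\ge1}Q_{nh}(x,A)>0$. Since $A$ has positive measure, either $A_+=A\cap(0,\infty)$ or $A_-=A\cap(b,0)$ has positive measure; I will treat the two cases separately. Throughout, killing at rate $\kf$ only multiplies probabilities by a factor at least $e^{-\max(q_X,q_Y)t}>0$, so it suffices to work with $U$ (as in the proof of \cref{irreducibility}).

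\textbf{Case $A_+$ of positive measure.} First move from $x$ to a point $x_0\ge 0$. If $x\ge0$ take $x_0=x$; otherwise \cref{irreducibility} gives $\P_x(H_{x_0}<\tau_b^-\wedge\zeta)>0$ for, say, $x_0=1$, and I will use the strong Markov property at $H_{x_0}$. On the event that $V$ stays above $0$ the process coincides with $X$, so
\[
Q_t(x_0,dz)\ge e^{-q_Xt}\,\P_{x_0}(X_t\in dz;\,t<\tau_0^-)\ge e^{-q_Xt}m^{x_0,A_+}(t)\,\mu_{A_+}(dz),\qquad t>t_0,
\]
by \cref{lem:minorization-X}. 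The measure $\mu_{A_+}$ is a multiple of Lebesgue measure on $A_+\cap[a_1,a_2]$, so $\mu_{A_+}(A)>0$. Hence $Q_t(x_0,A)>0$ for \emph{every} $t>t_0$, not merely for some $t$. This "all large $t$" positivity is exactly what the skeleton chain needs.

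To start from $x$ rather than $x_0$, write $S=H_{x_0}$. Then
\[
Q_{nh}(x,A)\ge \E_x\bigl[\Ind_{\{S<\tau_b^-\wedge\zeta,\ S\le s_0\}}\,Q_{nh-S}(x_0,A)\bigr].
\]
Choose $s_0$ with $\P_x(S\le s_0,\ S<\tau_b^-\wedge\zeta)>0$, and then $n$ with $nh-s_0>t_0$. Since $m^{x_0,A_+}>0$ on $(t_0,\infty)$, the integrand is positive on this event, so the expectation is positive. Measurability in $S$ is harmless because $m$ is bounded below on compacts.

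\textbf{Case $A_-$ of positive measure.} The argument is identical, using \cref{lem:minorization-Y} in place of \cref{lem:minorization-X}. Pick $x_0\in(b,0)$ and reach it from $x$ via \cref{irreducibility}. Before $\tau_0^+\wedge\tau_b^-$ the process coincides with $Y$, which gives
\[
Q_t(x_0,dz)\ge e^{-q_Yt}\,\tilde m^{x_0,A_-}(t)\,\tilde\mu_{A_-}(dz),\qquad t>t_0 .
\]
Conclude as before.

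\textbf{Main obstacle.} The delicate step is passing from "positive for all $t>t_0$" at the fixed point $x_0$ to the random delay $S$. This needs a bound on $m$ uniformly over $t$ in a compact window, which is precisely the $\inf_{[t_1,t_2]}m>0$ clause of the two minorisation lemmas. So I would restrict $S$ to an interval $[s_1,s_0]$ and pick $n$ so that $nh-S$ lies in a fixed window $[t_1,t_2]\subset(t_0,\infty)$.

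A second point to check is that the refracted process agrees with $X$ (respectively $Y$) up to $\tau_0^-$ (respectively $\tau_0^+$) when started at $x_0\ne0$. I would state this explicitly from the Noba--Yano construction, and choose $x_0\neq0$ to avoid any subtlety from starting at the origin.
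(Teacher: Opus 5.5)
Your proposal is correct and follows essentially the paper's route. You split $A$ by sign, use \cref{lem:minorization-X} or \cref{lem:minorization-Y} to get $Q_t(x_0,A)>0$ for every $t>t_0$ from a reference point $x_0\neq 0$, and then reach that point from an arbitrary $x$ via \cref{irreducibility} and the strong Markov property at $H_{x_0}$. Your handling of the random delay, where $\inf_{[t_1,t_2]}m>0$ gives a deterministic lower bound, is more careful than the paper's brief sketch, although positivity of a non-negative measurable integrand on an event of positive probability would already suffice.
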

    \begin{proof}
        Initially, fix a set $A\subset (0, \infty)$ of positive Lebesgue measure.
        Start by considering $x>0$. 
        From \cref{lem:minorization-X}, there exists $t_0 = t_0(x,A)$ such that
        for $t>t_0$,
       \begin{align*}
           \P_x(V_t\in A; \;t<\tau_0^-)>0.
       \end{align*}
       (The presence of killing at time $\zeta$ does not affect this result.)
       When $x \in (b,0]$, applying \cref{irreducibility} allows us to reduce to the
       case of starting above zero. Again similar argument holds for $A \subset (b, 0)$ by \cref{lem:minorization-Y}.
       
       This will imply that for $x\in (b, 0),$ there exists a time \(t_0=t_0(x,A)>0\) such that
for all \(t>t_0\)
\[
\mathbb{P}_x\big( V_t \in A \; ; t<\tau_0^+\wedge \tau_b^-\big) > 0.
\] 
Finally, take $x \in [0, \infty),$ we again use \eqref{upward_passage_positivity} and iteratively, we have the following. That is, for all $x \in (b, \infty)$ and any set $A\subset(b, \infty)$ of Lebesgue positive measure ($\{0\}$ is ignored as it is Lebesgue zero set), there exits $t(x, A)$ such that for all $t>t(x, A):$
\[
\mathbb{P}_x\big( V_t \in A \; ; t<\tau_b^-\big) > 0.
\] 
Thus, we have that for all $h>0$, $x\in (b, \infty)$ and positive Lebesgue measure set $A\subset (b, \infty)$, the $0$-resolvents are positive.
    \end{proof}

We establish a strong Doeblin-type minorization condition for the process. This condition allows us to extend results initially proved for particular sets to a sufficiently rich class of Borel sets. It will be used to verify positive recurrence and, subsequently, to derive exponential convergence results.

\begin{lemma}\label{unifrom-minorizing-measure}
Under the Assumptions \ref{a:A1}--\ref{a:A5}, for a bounded Borel set $L\subset (b, \infty)$ with $\inf L > b$
and for any set $A\subset(b, \infty)$ of positive Lebesgue measure, there exist $t_1=t_1(L, A)$,
a function $m^{L,A}\colon (t_1,\infty) \to (0,\infty)$ and a measure $\mu'_A$ on $(b,\infty)$
such that for all $t>t_1$, $y\in A$,
\begin{align*}
    \inf_{x\in L}\P_x(V_t\in dy; \; t<\tau_b^-)\geq m^{L, A}(t)\mu_A'(dy).
\end{align*}
Moreover $\inf_{[t_2, t_3]}m^{L,A}>0$ for $t_1 < t_2 < t_3$ and $\mu'_A(A) > 0$.
\end{lemma}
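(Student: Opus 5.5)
The plan is to upgrade the pointwise minorisations of \cref{lem:minorization-X} and \cref{lem:minorization-Y} to bounds that are uniform over the starting set $L$. We route every starting point $x\in L$ through a single fixed reference point $x_0$, and then apply the minorisation from $x_0$ via the strong Markov property.

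\textbf{Step 1: the reference point and the target measure.} Since $A$ has positive Lebesgue measure, either $A\cap(0,\infty)$ or $A\cap(b,0)$ does. In the first case fix $x_0=0$ (or any point in $(0,\infty)$) and take $\mu'_A=\mu_{A\cap(0,\infty)}$ from \cref{lem:minorization-X}. In the second case take $x_0\in(b,0)$ and $\mu'_A=\tilde\mu_{A\cap(b,0)}$ from \cref{lem:minorization-Y}. The killing by $\zeta$ only costs a factor $e^{-\max(q_X,q_Y)t}$, which is harmless on compact time intervals. Moreover, before $\tau_0^-$ (respectively before $\tau_0^+\wedge\tau_b^-$), $V$ coincides with $X$ (respectively $Y$). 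This yields $\P_{x_0}(V_s\in dy;\,s<\tau_b^-)\ge m_0(s)\mu'_A(dy)$ for $s>s_0$, with $m_0$ bounded below on compact subintervals of $(s_0,\infty)$.

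\textbf{Step 2: uniform hitting of $x_0$ from $L$.} We need $T<\infty$ and $\eta>0$ such that
\[
\inf_{x\in L}\P_x(H_{x_0}<T\wedge\tau_b^-\wedge\zeta)\ge\eta .
\]
For $x\le x_0$ this follows from the two-sided exit formula \eqref{upward_passage_positivity}: the bound $W_V^{(q)}(x,b)/W_V^{(q)}(x_0,b)$ with $q>0$ controls $e^{-qH}$, is monotone in $x$, and is bounded below by its value at $\inf L>b$. Since $V$ creeps upward, hitting occurs at $\tau^+_{x_0}$. For $x>x_0$ we cannot argue pointwise, so we use compactness instead. Cover $\overline L$ by finitely many small intervals and use \cref{irreducibility} together with a continuity/monotonicity argument. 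Concretely, from $x$ the process must first pass below $x_0$. Starting from $\sup L$ and using downward passage of $X$ (which drifts to $-\infty$ by \ref{a:R}), we bound $\P_x(\tau^-_{x_0'}<T)$ from below uniformly for a suitable $x_0'<x_0$. We then hit $x_0$ upward from the undershoot position, using the exit formula again; the undershoot lies in $(b,x_0)$ with positive probability, by \ref{a:A2}/\ref{a:A3} (small jumps). Choosing $x_0$ slightly above $\inf L$-type levels makes this cleaner.

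\textbf{Step 3: assembling the bound.} Apply the strong Markov property at $H_{x_0}$:
\[
\P_x(V_t\in dy;t<\tau_b^-)\ge\int_{[0,T)}\P_x(H_{x_0}\in ds,\,H_{x_0}<\tau_b^-\wedge\zeta)\,m_0(t-s)\,\mu'_A(dy).
\]
Set $t_1=T+s_0$ and $m^{L,A}(t)=\eta\inf_{r\in[t-T,t]}m_0(r)$, after first replacing $m_0$ by its lower bound on compacts to make this infimum positive. This gives a bound independent of $x\in L$, with $\inf_{[t_2,t_3]}m^{L,A}>0$ and $\mu'_A(A)>0$. The main obstacle is Step 2 for starting points above $x_0$, that is, uniform downward passage followed by landing in $(b,x_0)$ and not below $b$. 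I would first try a single explicit downward path that uses the support conditions and the weak convergence of exit laws in $x$. If that fails, I would fall back on lower semicontinuity in $x$ of $\P_x(H_{x_0}<T\wedge\tau_b^-)$, which follows from the Feller property, combined with compactness of $\overline L$.
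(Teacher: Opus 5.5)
\textbf{There is a genuine gap in Step~2, for starting points $x > x_0$.}

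Your Steps~1 and~3 are sound. So is the half of Step~2 covering starting points $x\le x_0$. There, the monotonicity of $x\mapsto W_V^{(q)}(x,b)/W_V^{(q)}(x_0,b)$, combined with
$\P_x(H_{x_0}<T\wedge\tau_b^-\wedge\zeta)\ge \E_x[e^{-qH_{x_0}};H_{x_0}<\tau_b^-\wedge\zeta]-e^{-qT}$,
gives a bound that is uniform over $x\in L$.

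The case $x>x_0$, which you flag as the main obstacle, is left open, and neither proposed route is carried through.
\begin{enumerate}
\item The ``explicit downward path'' sketch never gives a lower bound, uniform in $x\in L$, on the probability that the first passage below $x_0$ occurs before a fixed time and lands in $(b,x_0)$.
\item The fallback claim that $x\mapsto\P_x(H_{x_0}<T\wedge\tau_b^-)$ is lower semicontinuous ``by the Feller property'' is unjustified. Hitting a single point is not an open event in the Skorokhod topology, so no Portmanteau argument applies. Moreover, near the refraction level $0$ the process is only defined through It\^o synthesis.
\end{enumerate}
The uniformity over $L$ is exactly what this lemma adds to \cref{lem:minorization-X,lem:minorization-Y}, so the proof is incomplete as written.

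\textbf{The missing idea: never travel downward uniformly.} Place the reference point at the top of $L$, not near $\inf L$; your closing remark in Step~2 points the wrong way.

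If $A\cap(0,\infty)$ has positive measure, take $x_0\ge\sup L$ with $x_0>0$. Do not take $x_0=0$: if $X$ has unbounded variation it leaves $[0,\infty)$ immediately, and $V$ started at $0$ is not $X$. With this choice, your argument for $x\le x_0$ already covers all of $L$.

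If only $A\cap(b,0)$ has positive measure, the reference point must lie in $(b,0)$, so use two legs.
\begin{enumerate}
\item From every $x\in L$, go up to a fixed $d_2\ge\sup L$. This is uniform by the same monotonicity.
\item From the \emph{single} point $d_2$, go down to $x_0\in(b,0)$. By \cref{irreducibility}, and continuity from below in the time horizon, this has positive probability before some finite time $T'$. Because it starts from one fixed point, it needs no uniformity.
\end{enumerate}
The strong Markov property at $\tau^+_{d_2}$ concatenates the two legs.

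This is exactly the paper's proof. It obtains \eqref{inf-lower-bound-clean} by comparing every $x\in L$ with the worst starting point $d_1=\inf L$, via the strong Markov property at $\tau_x^+$ and the fact that $V$ creeps upward. That is equivalent to your exit-formula monotonicity. In the case $A\subset(b,0)$, the paper goes from $x$ up to $d_2$ and back down to the reference point $d_1<0$. With this change your Step~3 goes through unchanged, and no compactness or semicontinuity argument is needed.
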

\begin{proof}
We consider separately the cases
$A\subset (0,\infty)$ and $A\subset (b,0)$.
We first consider $A\subset (0,\infty)$ and assume that
\[
L=[d_1,d_2]\subset (b,\infty), \qquad d_1>b \text{ and } d_2>0,
\]
which can be done by possibly enlarging $L$, without loss of generality.
Next, since $W_V(x,b)>0$ for all $x\in (b,\infty)$, it follows from
\cref{two-sided-exit} that there exists $t_L>0$
such that
\[
\P_{d_1}\!\left(
\tau_{d_2}^+<\tau_{b}^-\wedge t_L\wedge \zeta
\right)>0.
\]
For fixed $x\in L$,
applying the strong Markov property at $\tau_x^+$,
\[
\P_{d_1}\!\left(
\tau_{d_2}^+<\tau_{b}^-\wedge t_L\wedge \zeta
\right)
\le
\P_{d_1}\!\left(
\tau_x^+<\tau_{b}^-\wedge t_L\wedge \zeta
\right)
\,
\P_x\!\left(
\tau_{d_2}^+<\tau_{b}^-\wedge t_L\wedge \zeta
\right),
\]
which yields
\begin{equation}\label{inf-lower-bound-clean}
\inf_{x\in L}\P_x\!\left(
\tau_{d_2}^+<\tau_{b}^-\wedge t_L\wedge \zeta
\right)
\ge
\P_{d_1}\!\left(
\tau_{d_2}^+<\tau_{b}^-\wedge t_L\wedge \zeta
\right)
>0,
\qquad x\in L.
\end{equation}
Applying \cref{lem:minorization-X} and additionally accounting for the killing at time $\zeta$,
there exists $t_0=t_0(d_2,A)>0$ such that for all $t>t_0$,
\[
\P_{d_2}(V_t\in dy,\ t<\tau_0^-\wedge \zeta)
\ge
e^{-q_X t} m^{d_2, A}(t)\mu_A(dy),
\qquad y\in A,
\]
for some $m^{d_2, A}(t)>0$.

Let $t_1 = t_L+t_0(d_2,A)$ and take $t> t_1$ and $x\in L$.
Applying the strong Markov property at $\tau_{d_2}^+$,
we obtain
\begin{align*}
&\P_x(V_t \in dy ,\, t<\tau_b^- \wedge \zeta)
\\&\ge 
\P_x\!\left(
\tau_{d_2}^+<t_L\wedge\tau_b^- \wedge \zeta,\;
t<\tau_b^-\wedge \zeta,\;
V_t\in dy
\right)
\\
&=
\E_x\!\left[
\mathbf{1}_{\{\tau_{d_2}^+<t_L\wedge\tau_b^- \wedge \zeta\}}
\bigl. \P_{d_2}\!\left(
V_{t-s}\in dy,\;
t-s<\tau_b^- \wedge \zeta
\right)\bigr\rvert_{s=\tau_{d_2}^+}
\right]
\\
&\ge
\E_x\!\left[
e^{-q(t-\tau_{d_2}^+)}
\mathbf{1}_{\{\tau_{d_2}^+<t_L\wedge\tau_b^- \wedge \zeta\}}
m^{d_2,A}(t-\tau_{d_2}^+)\right]
\mu^A(dy)
\\
&\ge
e^{-q_Xt}\P_x\!\left(
\tau_{d_2}^+<t_L\wedge\tau_b^- \wedge \zeta
\right)
\inf_{0\le \gamma\le t_L+t_0(d_2, A)}
m^{d_2, A}(t-\gamma)\,\mu_A(dy).
\end{align*}

Combining this with \eqref{inf-lower-bound-clean}, we deduce that
\[
\inf_{x\in L}\P_x(V_t\in dy;\ t<\tau_b^-\wedge \zeta)
\ge m^{L, A}(t)\,\mu_A(dy),
\]
for some $m^{L, A}(t)>0$. Moreover, the positivity
$\inf_{[t_2,t_3]} m^{L,A} > 0$ for $t_1<t_2<t_3$
is inherited from the corresponding property of $m^{d_2, A}$.

The case $A\subset (b,0)$ is treated in an analogous manner. The only difference is that we now take $L=[d_1,d_2]$ with $d_1<0$.
In this case, we use \cref{irreducibility} to replace \eqref{inf-lower-bound-clean} by the inequality
\[
  \inf_{x\in L} \P_x(H_{d_1}<\tau^-_b \wedge t_L \wedge \zeta)
  \ge \P_{d_1}(\tau^+_{d_2} \vee H_{d_1} < \tau^-_b \wedge t_L \wedge\zeta) > 0,
\]
and then proceed as in the proof above, using $d_1$ as a reference point instead of $d_2$,
and \cref{lem:minorization-Y} in place of \cref{lem:minorization-X}.
\end{proof}

We are now in a position to define two important critical rates.
For any $a>0$, we define
\begin{align*}
  \sigma(b,a) &= \inf\{\,p \ge 0 : W_{V}^{(-p)}(a,b) = 0\,\}, \text{ and} \\
  \sigma(b) &= \inf\{\,p \ge 0 : D^{(-p)}(b) = 0\,\}.
\end{align*}

Our approach in the next section will be to use $R$-theory to establish the long-term behaviour of $V$ using its resolvent density. This is inspired by \cite{Bertoin1997}, who studied a totally asymmetric Lévy process $X$ killed upon exiting a compact interval. The definition of $\sigma(b,a)$ is analogous to the rate defined in that work. In the following section, we will show that, for $V$ killed on exiting $(b,\infty)$, the corresponding rate is $\sigma(b)$, and we will also consider the convergence of $\sigma(b,a)$ as $a\to\infty$.

\begin{proposition}\label{cor_1}
\begin{enumerate}
    \item The resolvent density $r^{(q)}_b$ obtained in \cref{lem:limit_potential}
remains valid for $q > \max\{\, \inf \Psi_X-q_X,\, -\sigma(b)\}$.
  \item The resolvent density $r^{(q)}_{(b,a)}$ found in \cref{exist_resolvent} remains valid for $q>-\sigma(b, a)$.
  \item\label{i:cor_1:3}
  If \ref{a:A1}\ref{a:A1a} holds, let $L \ge \max\{-\sup(\supp \Pi_X) + b, 0\}$,
  and if \ref{a:A1}\ref{a:A1b} holds, let $L>0$.
  Then, 
  $\sigma(b, x) \ge \sigma(b,a)$ for all $a\ge L$ and $x \le a$.
\end{enumerate}
\end{proposition}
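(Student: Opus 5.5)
For parts (i) and (ii) I would use analytic continuation together with the identity theorem for Laplace transforms, following \citet{Bertoin1997}. Fix $x,y$ and consider $q\mapsto r^{(q)}_b(x,y)$. By \cref{lem:Dq_analytic} and \cref{prop:WV-analytic}, the functions $D^{(q)}(y)$, $D^{(q)}(b)$, $W_V^{(q)}(x,b)$, $W_V^{(q)}(x,y)$ and $W_X^{(q+q_X)}(x-y)$ are analytic in $q$ on $\mathcal D$. The function $e^{-\Phi_X(q+q_X)y}$ is also analytic there, since $\Phi_X$ is analytic where $\Re(q+q_X)>\inf\Psi_X$.

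On the real half-line $q>-\sigma(b)$ inside $\mathcal D$, we have $D^{(q)}(b)\ne0$. For $q\ge0$ this holds because $D^{(q)}(b)>0$ by \eqref{eq:WV-limit}. For $q\in(-\sigma(b),0)$ it holds because, by definition of $\sigma(b)$ and continuity, $D^{(q)}(b)$ is nonzero on $(-\sigma(b),0]$; being positive at $0$, it stays positive. Hence the formula of \cref{lem:limit_potential} defines a real-analytic function of $q$ on $(\max\{\inf\Psi_X-q_X,-\sigma(b)\},\infty)$.

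On the other side, $q\mapsto \int f(y)R^{(q)}_b(x,dy)=\int_0^\infty e^{-qt}Q_tf(x)\,dt$ for nonnegative $f$ is a Laplace transform of a nonnegative function. Its abscissa of convergence $q_0$ is a singularity, by the Pringsheim–Landau theorem. If $\int f(y) r^{(q)}_b(x,y)\,dy$ is analytic on a neighbourhood of $(q_c,\infty)$, where $q_c$ is the claimed bound, then it agrees with the Laplace transform for $q\ge0$. Landau's theorem then forces $q_0\le q_c$, and the identity holds on the whole range by uniqueness of analytic continuation.

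To justify analyticity of the integrated density $q\mapsto\int f(y)r_b^{(q)}(x,y)\,dy$, I would take $f=\mathbf 1_A$ with $A$ bounded. I would use the domination bounds of \cref{lem:Dq_bound}: $|D^{(q)}(y)|$ and $|W_V^{(q)}(x,y)|$ are dominated by functions integrable on $(b,0)$, uniformly for $q$ in a compact set. Then a Morera/Fubini argument, as in \cref{lem:Dq_analytic}, gives the result. Unbounded $A\subset(0,\infty)$ is then handled by monotone convergence, since $e^{-\Phi_X(q+q_X)y}$ is integrable when $\Phi_X(q+q_X)>0$, which holds by \ref{a:R}. This domination step is the one I regard as the main technical obstacle, especially near $y=0$ and for $y\to\infty$.

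Part (ii) is identical but easier. On the compact interval $(b,a)$ only $W_V^{(q)}(a,b)\neq0$ is needed, which holds for $q>-\sigma(b,a)$, and \cref{lem:Dq_bound} gives domination. Here $W_V^{(q)}$ is entire, so no restriction from $\inf\Psi_X$ appears.

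For (iii), I would use monotonicity of survival. For $x\le a$, the process killed outside $(b,x)$ is dominated by the process killed outside $(b,a)$, so $R^{(q)}_{(b,x)}f\le R^{(q)}_{(b,a)}f$ for nonnegative $f$ and real $q$ where the right side is finite. By (ii), the right side is finite for $q>-\sigma(b,a)$, so the Laplace transform for the smaller interval converges there. Its abscissa is therefore at most $-\sigma(b,a)$.

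It remains to identify this abscissa with $-\sigma(b,x)$, meaning the singularity is genuinely at the first zero of $W_V^{(\cdot)}(x,b)$. Landau's theorem gives that the abscissa is a singularity, so it remains to check that the zero of $W_V^{(-\sigma(b,x))}(x,b)$ produces a true pole rather than cancelling. For this I would use positivity from Lebesgue irreducibility on $(b,L)$: the resolvent of the process killed outside $(b,x)$ is positive on sets of positive measure. The condition on $L$ is exactly the one under which the proof of \cref{lebesgue_irr} applies, so for $a\ge L$ the numerator $W_V^{(q)}(a,y)$ does not vanish identically. Consequently $-\sigma(b,x)\le-\sigma(b,a)$, that is, $\sigma(b,x)\ge\sigma(b,a)$.
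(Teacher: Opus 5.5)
Your treatment of (i) and (ii) is the same as the paper's. You use analyticity of the explicit density on the stated real range, domination via \cref{lem:Dq_bound} to carry analyticity through the $dy$-integral, and the Landau--Pringsheim singularity principle for Laplace transforms of nonnegative functions, as in Proposition~3 of \cite{Bertoin1997}.

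In (iii), however, there is a gap at exactly the step you flag. The comparison $R^{(q)}_{(b,x)}f\le R^{(q)}_{(b,a)}f$ is fine, and it bounds the abscissa of convergence for $(b,x)$ above by $-\sigma(b,a)$. The conclusion then rests on that abscissa being \emph{at least} $-\sigma(b,x)$, i.e.\ on the zero of $q\mapsto W_V^{(q)}(x,b)$ not being cancelled in $r^{(q)}_{(b,x)}$. Your justification does not show this, for three reasons.
\begin{itemize}
\item The irreducibility from \cref{lebesgue_irr} concerns the process killed outside $(b,a)$ with $a\ge L$. It says nothing about the process killed outside $(b,x)$ for the arbitrary $x\le a$ the statement allows, for instance $0<x<L$ under (A1)(a), or $x\le 0$. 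In the latter case \cref{exist_resolvent} does not even give a formula for $r_{(b,x)}$.
\item Non-vanishing of $W_V^{(q)}(a,y)$ is a property of the numerator of $r_{(b,a)}$, not of $r_{(b,x)}$. The numerator of $r_{(b,x)}$ is $W_V^{(q)}(z,b)W_X^{(q+q_X)}(x-y)$ or $W_V^{(q)}(z,b)W_V^{(q)}(x,y)$.
\item Positivity of a resolvent would not by itself exclude a removable singularity. What is needed is that this numerator is nonzero at $q=-\sigma(b,x)$ for some $z$ and $y$.
\end{itemize}

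For $x>0$ the step can be repaired directly. Take $z$ close to $b$, where $W_V^{(q)}(z,b)=W_Y^{(q+q_Y)}(z-b)>0$ by small-argument positivity \cite[Lemma~5]{Bertoin1997}. Take $f=\Ind_{(x-\delta,x)}$ with $\delta$ small, where $W_X^{(q+q_X)}(x-y)>0$. Then $R^{(q)}_{(b,x)}f(z)\to\infty$ as $q\downarrow-\sigma(b,x)$, which gives the missing lower bound on the abscissa. The case $x\le 0$ needs the analogous $Y$-only formula.

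The paper avoids the smaller interval altogether. Suppose $W_V^{(-q)}(x_0,b)=0$ for some $q<\sigma(b,a)$ and $x_0\in(b,a)$. By (ii), the density $r^{(-q)}_{(b,a)}(x_0,\cdot)$ is valid, and it vanishes on $(x_0,a)$: its first term carries the factor $W_V^{(-q)}(x_0,b)$, and its second term is zero for $y>x_0$. This contradicts Lebesgue irreducibility of the process killed outside $(b,a)$, which is exactly what $a\ge L$ guarantees. The argument handles every $x_0\in(b,a)$ regardless of sign and needs no identification of abscissae.
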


\begin{proof}
\begin{enumerate}
\item
Start by taking a non-negative, bounded measurable function $f$, supported on $(0, \infty)$.
We know from
\cref{lem:limit_potential} 
that for $q\geq 0,$ for a fixed $x \in \R$ we have:
\begin{equation}\label{local-resolvent-eqn}
\mathbb{E}_x\!\left[
\int_0^{\tau_b^-}
e^{-qt} f(V_t)\,dt
\right]
=
\int_{(b,\infty)} f(y)\, r_{b}^{(q)}(x,y)\, dy.
\end{equation}
From \cref{lem:Dq_analytic} and \cref{prop:WV-analytic},
it can be seen that $r_b^{(q)}(x,y)$ is analytic for $q > \max\{\, \inf \Psi_X-q_X,\, -\sigma(b)\}$. Now, fix $\max\{\, \inf \Psi_X-q_X,\, -\sigma(b) \,\}<q_0<0$.
By assumption (R), we know that there exists $\epsilon = \arginf\psi_X >0$ such that
$e^{-\Phi_X(q+q_X)y}\leq e^{-\epsilon y}$ for $q\ge q_0$ and $y\ge 0$.
Also, note that $C\coloneqq\inf_{0\le q\le q_0}D^{(q)}(b)>0,$ since $D^{(\cdot)}(b)$ is analytic
by \cref{lem:Dq_analytic}. Therefore, for fixed $x$ and $y>0$, we have an analytic extension of the above as follows:
\begin{align*}
    r^{(q)}_b(x, y) = \sum_{n\geq 0} q^n d_n(x, y)
\end{align*}
Therefore, using \cref{lem:Dq_bound}
and the expression for $r_b^{(q)}$ in \cref{lem:limit_potential}, we have 
\begin{align*}
    &\lvert r_b^{(q)}(x,y) \rvert
= \left|\frac{e^{-\Phi_X(q+q_X)y}}{D^{(q)}(b)}W_{V}^{(q)}(x,b) \,  - W^{(q+q_X)}_X(x-y)\right|\\
&
\leq \frac{1}{C}e^{-\epsilon y}W_V^{(q_0)}(x, b)+ W^{(q_0+q_X)}_X(x-y),
\end{align*}
for $q\ge q_0$ and $y\ge 0$.
Since the right-hand side is non-negative and integrable in $y$, we can use Fubini's theorem, to conclude that the integral on the right-hand-side of \eqref{local-resolvent-eqn} also has an extension to $q>q_0,$ and therefore for all $q > \max\{\, \inf \Psi_X-q_X,\, -\sigma(b)\}$. The rest of the argument follows exactly as in Proposition $3$ in \cite{Bertoin1997}
and this concludes the proof for $y > 0$.

For $y<0$, the same argument works with a slight modification to ensure integrability,
as follows.
Assume $q_0\le q\le 0$ where $-\max\{\psi_X-q_X,-\sigma(b)\} < q_0 < 0$ and fix $y\in (b,0)$.
We have by \cref{lem:Dq_bound} that $D^{(q)}\rvert_{(0,b)}$
is bounded (uniformly in such $q$) by an integrable function.
% \[ 
%   \lvert D_2^{(q)}(y) \rvert
%   \le A(q_1,\epsilon, y)
%   = \iint \bigl(W_Y^{(q_1)}(-y) - W^{(q_1)}(u-y)\bigr) e^{-\epsilon v} \Pi_X(du-v)dv,
% \]
% with $q_1 > \max\{ q_Y, \lvert q_Y+q_0\rvert \}$, which by \cref{lem:weighted-to-unweighted} is integrable. Once this is established, we can conclude similarly to the $y>0$ case.

% \textcolor{blue}{We omit the red paragraph. Please read rge above explanation, I think it is useful to use Lemma \ref{lem:weighted-to-unweighted}.}
% {\color{red}Looking at the proof of \cref{cor_1}-(i), 
% we have, for any $M$, the bound
% \begin{align}
%   A(q_1,\epsilon,y)
%   &\le
%   e^{\Phi_X(q_1)M} \iint_{v\leq M} \bigl(W_Y^{(q_1)}(-y)-W_Y^{(q_1)}(u-y)\bigr) e^{-\Phi_X(q_1)v}
%   \Pi_X(du-v) dv \nonumber \\
%   & \quad {} + W_Y^{(q_1)}(-y)
%   \frac{1}{\epsilon} \int_{(-\infty,M]} (e^{\epsilon w} - e^{-\epsilon M}) \Pi_X(dw).
%   \label{eq:cor_1:inter1}
% \end{align}
% The second term is integrable over the compact interval $[b,0]$ because the scale function is continuous. We now deal with the first term. Let $q_2=q_1-q_Y>0$,
% and bound
% \begin{multline*}
%     \iint_{v\leq M} \bigl(W_Y^{(q_1)}(-y)-W_Y^{(q_1)}(u-y)\bigr) e^{-\Phi_X(q_1)v}
%   \Pi_X(du-v) dv 
%   \\
%   \le 
%   \iint \bigl(W_Y^{(q_2+q_Y)}(-y)-W_Y^{(q_2+q_Y)}(u-y)\bigr) e^{-\Phi_X(q_2+q_X)v}
%   \Pi_X(du-v) dv,
% \end{multline*}
% which appears in \cref{lem:limit_potential} and is thereby integrable.}

\item\label{i:cor_1:2}
This is similar to the preceding part, and even a little simpler,
since now $r_{(b,a)}$ is written in
terms of $W_V^{(q)}$ and $W_X^{(q)}$ alone.
We use the bounds on $W_V^{(q)}(x,b)$ from \cref{lem:Dq_bound}
along with classical bounds on the scale functions of $X$.
Since $W_{V}^{(q)}(x, b)$ is analytic in $q$ from \cref{prop:WV-analytic}, these show
that the resolvent representation of \cref{exist_resolvent} remains valid for all \(q > -\sigma(b,a)\).

\item
As remarked at the end of \cref{lebesgue_irr}, for such $L$, the
process killed on exiting $(b, L)$ is Lebesgue irreducible;
see the end of the proof of \cref{lebesgue_irr}.
Let $a\ge L$ and suppose, for contradiction, that, for some \(q < \sigma(b,a)\) and \(x_0 \in (b,a)\),  \(W_V^{(-q)}(x_0,b) = 0\).

Then, by part \ref{i:cor_1:2} and \cref{exist_resolvent}, we see that $r^{(-q)}_{(b,a)}(x_0, y) = 0$ for all $y\in (x_0, a),$ since the first term in $r^{(-q)}_{(b,a)}(x_0, y)$ is zero by the assumption, and the second term $W^{(-q+q_X)}(x_0-y) = 0$ since $y>x_0.$
This contradicts the Lebesgue irreducibility of \(V\).
Hence \(W_V^{(-q)}(x,b) > 0\) for all \(x \in (b,a)\), which establishes the claim.
\end{enumerate}
\end{proof}

\section{Positive recurrence, invariant function and invariant measure}
\label{s:positive-recurrence}

In the rest of the work, Assumptions \ref{a:W}, \ref{a:R} and \ref{a:A1}--\ref{a:A5}
are in force, and we assume moreover that
\begin{equation}\label{condition-Phi}
    \sigma(b) < q_X-\inf \Psi_X.
\end{equation}
holds.

As the following result shows, \eqref{condition-Phi} is satisfied under quite natural
conditions on the processes $X$ and $Y$: opposing drifts and 
common existence of an exponential moment.

\begin{proposition}\label{prop:condition-Phi-example}
Let $q_X=q_Y=0$ and let $Y$ drift to $+\infty$.
Assume moreover that both $\psi_X$ and $\psi_Y$ are finite on some
interval $(-\epsilon, 0)$ with $\epsilon>0$.
Then, there exists $b_0<0$ such that, for every $b\le b_0$,
$
  \sigma(b) < -\inf \Psi_X.
$
\end{proposition}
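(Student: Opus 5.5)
We plan to argue by contradiction, combining the analytic continuation of the resolvent in \cref{cor_1}(i) with a lower bound on survival that comes from a regenerative structure. Write $p^* := -\inf\psi_X$, which is what $-\inf\Psi_X$ denotes here. Since $X$ drifts to $-\infty$ by \ref{a:R}, we have $\psi_X'(0+)<0$, so $\psi_X$ attains a strictly negative minimum on $(0,\infty)$ and $p^*>0$.

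Suppose that $\sigma(b)\ge p^*$. By \cref{cor_1}(i), with $q_X=0$, the expression for $r_b^{(q)}$ in \cref{lem:limit_potential} is then valid for every $q\in(-p^*,0)$. Fix such a $q=-p$ and take $x=0$. Following the argument of \cite[Proposition~3]{Bertoin1997}, as in that proof (monotone convergence in $q\downarrow -p$ together with analyticity), we get
\[
\E_0\Bigl[\int_0^{\tau_b^-}e^{pt}\,dt\Bigr]=\int_{(b,\infty)} r_b^{(-p)}(0,y)\,dy<\infty,
\]
where finiteness uses the integrable bounds from \cref{lem:Dq_bound} and $e^{-\Phi_X(-p)y}\le e^{-\epsilon y}$. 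The aim is to show that for $b\le b_0$ the left-hand side is infinite for some fixed $p\in(0,p^*)$ independent of $b$. This contradicts $\sigma(b)\ge p^*$.

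To produce that divergence we use a regeneration at $0$. Since $Y$ is spectrally negative it creeps upward, so $U$ enters $[0,\infty)$ from below continuously at $0$. Started at $0$, let $C$ be the first time $U$ returns to $0$ after first passing below $0$. By the strong Markov property, the successive cycles $C_1,C_2,\dots$ are i.i.d., and during a cycle the process goes below $b$ only if the undershoot at $\tau_0^-$ is below $b$. Set $\phi_b(p):=\E_0[e^{pC};\,C<\tau_b^-]$. Restricting to sample paths on which the first $n$ cycles all survive and summing over $n$ gives
\[
\E_0\Bigl[\int_0^{\tau_b^-}e^{pt}dt\Bigr]\ \ge\ \E_0\Bigl[\int_0^{C}e^{pt}dt\Bigr]\sum_{n\ge0}\phi_b(p)^n,
\]
and the right-hand side is infinite as soon as $\phi_b(p)\ge1$.

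The main step is to show that $\E_0[e^{p_0C}]<\infty$ for some $p_0\in(0,p^*)$. Granted this, take any $p\in(0,p_0)$. We have $\E_0[e^{pC}]>1$ because $C>0$, and by dominated convergence $\E_0[e^{pC};\,C\ge\tau_b^-]\to0$ as $b\to-\infty$, since the undershoot is a.s.\ finite. Hence $\phi_b(p)\ge1$ for all $b\le b_0$.

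For the exponential moment of $C$, we split $C$ into $T_0^-$ for $X$ started at $0$, followed by $T_0^+$ for $Y$ started at the undershoot $z=X_{T_0^-}<0$.
\begin{itemize}
\item For $X$: since $\psi_X$ is finite on $(-\epsilon,0)$ and has negative minimum $-p^*$, a standard exponential change of measure (or the identity $\E_0[e^{pT_0^-+\theta X_{T_0^-}}]$ via $Z$-scale functions) gives $\E_0[e^{pT_0^-}e^{-\lambda X_{T_0^-}}]<\infty$ for small $p>0$ and small $\lambda>0$. The finiteness of $\psi_X$ on the negative side is used here to control the undershoot.
\item For $Y$: it drifts to $+\infty$ and $\psi_Y$ is finite on $(-\epsilon,0)$, so $\psi_Y$ is negative just to the left of $0$. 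For small $p>0$ this gives $\E_z[e^{pT_0^+}]=e^{-\Phi_Y(-p)z}$, where $\Phi_Y(-p)<0$ is small, and $\Phi_Y(-p)\to0$ as $p\downarrow0$.
\end{itemize}
Combining the two via the strong Markov property, $\E_0[e^{pC}]\le\E_0\bigl[e^{pT_0^-}e^{\Phi_Y(-p)X_{T_0^-}}\bigr]$, and this is finite once $|\Phi_Y(-p)|$ is below the exponential-moment threshold of the undershoot.

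I expect this joint exponential moment of the first-passage time and the undershoot to be the main obstacle. I would handle it with the fluctuation identity for $\E[e^{-qT_0^-+\theta X_{T_0^-}}]$ (written in terms of $Z_X^{(q)}$ and $W_X^{(q)}$), extended analytically to small negative $q$ and small negative $\theta$.
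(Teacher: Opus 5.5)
Your route is genuinely different from the paper's, and it is sound in outline once one step is repaired. The paper's proof is purely analytic. It fixes one $q\in(\max\{\inf\Psi_X,\inf\Psi_Y\},0)$ with $\psi_X(\Phi_Y(q))<\infty$. Using $W_Y^{(q)}(u-x)/W_Y^{(q)}(-x)\nearrow e^{\Phi_Y(q)u}$ and monotone convergence, it shows $\lim_{x\to-\infty}D^{(q)}(x)/W_Y^{(q)}(-x)=(\psi_X(\Phi_Y(q))-q)/(\Phi_Y(q)-\Phi_X(q))<0$. Hence $D^{(q)}(b)<0$ for $b\le b_0$, and a root of $D^{(\cdot)}(b)$ in $(q,0)$ follows from $D^{(0)}(b)>0$ and continuity. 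The exponential moments on $(-\epsilon,0)$ are exactly what make $\Phi_Y(q)<0$ and $\psi_X(\Phi_Y(q))$ finite. You never examine $D^{(q)}(b)$ directly. Instead you convert $\sigma(b)>p$ into finiteness of $\E_x[\int_0^{\tau_b^-}e^{pt}\,dt]$ via \cref{cor_1}(i), and refute that by a renewal lower bound. This is more probabilistic and transparent, but all the analytic weight then rests on \cref{cor_1}(i), the Pringsheim-type continuation of the resolvent. The paper's argument instead exhibits the root explicitly.

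The step that fails as written is the regeneration at $0$. Assumption \ref{a:W} allows $X$ to have unbounded variation, and then $0$ is regular for $(-\infty,0)$. Under $\P_0$ one has $\tau_0^-=0$, so your cycle length $C$ vanishes a.s., and the claim that $\E_0[e^{pC}]>1$ because $C>0$ is false. Your construction is fine only when $X$ has bounded variation.

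The fix is to regenerate at some $x_0>0$, with $C$ the first hitting time of $x_0$ after $\tau_0^-$; this is an exact hit, since $U$ has no upward jumps. Then $C\ge\tau_0^->0$. The cost is that $C$ now contains a stretch of the refracted dynamics started from $0$, so your decomposition $C=T_0^-+T_0^+$ and the joint transform of $(T_0^-,X_{T_0^-})$ no longer describe it.

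However, that step, which you flag as the main obstacle, is unnecessary. You only need $C<\infty$ a.s., which follows from recurrence of $U$ since $X$ drifts to $-\infty$ and $Y$ to $+\infty$. Then $\{C<\tau_b^-\}$ increases to a full-probability event as $b\to-\infty$, because $\inf_{[0,C]}U$ is a.s.\ finite. Monotone convergence gives $\phi_b(p)\uparrow\E_{x_0}[e^{pC}]\in(1,\infty]$ for each fixed $p\in(0,p^*)$, with no integrability of $e^{pC}$ required.

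This also repairs two smaller errors:
\begin{enumerate}
\item Killing inside a cycle can occur during the $Y$-phase, by a jump of $Y$ below $b$, not only through the undershoot at $\tau_0^-$.
\item For $z<0$ the correct formula is $\E_z[e^{pT_0^+}]=e^{\Phi_Y(-p)z}\ge 1$, not $e^{-\Phi_Y(-p)z}$; your combined bound already uses the right sign.
\end{enumerate}

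Repaired this way, your argument gives $\sigma(b)\le p$ for all $b\le b_0(p)$, hence $\sigma(b)\to0$. The paper gets the same by letting $q\uparrow 0$. Your version appears not to use the exponential-moment hypothesis at all.
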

\begin{proof}
Under the assumptions in the statement, there exists some $q$ with the properties that
\[
  \max\{\inf \Psi_X, \inf \Psi_Y\} < q < 0
  \text{ and }
  \psi_X(\Phi_Y(q)) < \infty.
\]
We note that since $X$ drifts to $-\infty$, 
$    \Phi_X(q)>0$,
and since $Y$ drifts to $+\infty$,
$
    \Phi_Y(q)<0
$.
Moreover, by \cite[Theorem~1.5]{Yamato2023_spectrally_positive_Levy},
\begin{equation*}
    -\inf\Psi_Y
    =
    \inf\left\{
        p\geq0:
        W_Y^{(-p)}(z)\leq0
        \text{ for some }z>0
    \right\}.
\end{equation*}
Since $-q<-\inf\Psi_Y$, it follows that
\begin{equation}\label{eq:WYq-positive}
    W_Y^{(q)}(z)>0,
    \qquad z>0.
\end{equation}

By \cref{lem:limit_potential,lem:Dq_analytic}, for every $x<0$,
\begin{align}
\frac{D^{(q)}(x)}
     {W_Y^{(q)}(-x)}
&=
\frac{q}{\Phi_X(q)}
+
\iint
\left(
    1-
    \frac{W_Y^{(q)}(u-x)}
         {W_Y^{(q)}(-x)}
\right)
e^{-\Phi_X(q)v}\,
\widetilde{\Pi}_X(du,dv).
\label{eq:D-ratio-proof}
\end{align}

For fixed $u<0$, the argument used in the proof of
\cref{lem:limit_potential}, with $Y$ in place of $X$, gives
\begin{equation}\label{eq:ratio-monotone-example}
    \frac{W_Y^{(q)}(u-x)}
         {W_Y^{(q)}(-x)}
    \mathbin{\nearrow}
    e^{\Phi_Y(q)u},
    \qquad x\to-\infty.
\end{equation}
(Here, the positivity in \eqref{eq:WYq-positive} ensures that the same
two-sided exit argument remains valid for the negative parameter $q$.)
Since $u<0$ and $\Phi_Y(q)<0$, the integrand in \eqref{eq:D-ratio-proof}
is eventually negative, and it is decreasing as $x\to-\infty$.
Therefore, we can apply
monotone convergence to show that
\begin{align}
&\lim_{x\to-\infty}
\iint
\left(
    1-
    \frac{W_Y^{(q)}(-x+u)}
         {W_Y^{(q)}(-x)}
\right)
e^{-\Phi_X(q)v}\,
\widetilde{\Pi}_X(du,dv)
\nonumber\\
&\qquad=
\iint
\left(
    1-e^{\Phi_Y(q)u}
\right)
e^{-\Phi_X(q)v}\,
\widetilde{\Pi}_X(du,dv)
%\label{eq:D-ratio-MCT}
\nonumber
\\
&\qquad =
\int_{(-\infty,0)}
\int_0^{-z}
\left(
    1-e^{\Phi_Y(q)(z+v)}
\right)
e^{-\Phi_X(q)v}\,
\mathrm{d}v\,\Pi_X(\mathrm{d}z)
%\label{eq:D-change-variables-example}
\nonumber
\\
&\qquad =
\frac{
    \psi_X\bigl(\Phi_Y(q)\bigr)-q
}{
    \Phi_Y(q)-\Phi_X(q)
} - \frac{q}{\Phi_X(q)}.
\label{eq:D-divided-difference-example}
\end{align}
Combining \eqref{eq:D-ratio-proof} and
\eqref{eq:D-divided-difference-example} proves 
\begin{equation}\label{eq:D-ratio-limit}
    \lim_{x\to-\infty}
    \frac{D^{(q)}(x)}
         {W_Y^{(q)}(-x)}
    =
    \frac{
        \psi_X\bigl(\Phi_Y(q)\bigr)-q
    }{
        \Phi_Y(q)-\Phi_X(q)
    }
    <0.
\end{equation}
To show the negativity, observe that
$
    \Phi_Y(q)<0<\Phi_X(q).
$
Moreover, since $X$ drifts to $-\infty$,
$\psi_X'(0+)<0$. By convexity of $\psi_X$,
\begin{equation*}
    \psi_X\bigl(\Phi_Y(q)\bigr)
    \geq
    \Phi_Y(q)\psi_X'(0+)
    >0>q.
\end{equation*}
Thus, the numerator of \eqref{eq:D-ratio-limit} is positive and the
denominator is negative, so the limit is negative.

By \eqref{eq:WYq-positive}, $W_Y^{(q)}(-x)>0$ for every $x<0$.
It follows from \eqref{eq:D-ratio-limit} that there exists $b_0<0$ such
that
\begin{equation}\label{eq:Dq-negative-small-b}
    D^{(q)}(b)<0,
    \qquad b\leq b_0.
\end{equation}
Fix $b\leq b_0$.  We know from \cref{lem:limit_potential} that
$
    D^{(0)}(b)>0.
$
Since $r\mapsto D^{(r)}(b)$ is continuous by \cref{lem:Dq_analytic},
there exists $r_b\in(q,0)$ such that
$
    D^{(r_b)}(b)=0.
$
By definition, $\sigma(b)$ is the smallest positive zero of $r\mapsto D^{(-r)}(b)$,
so
\begin{equation*}
    \sigma(b)
    \leq -r_b
    <-q
    <-\inf\Psi_X,
\end{equation*}
which completes the proof.
\end{proof}

We now recall the definition of $Q_t$ from \eqref{eq:def_Q_t} and the following definitions from \cite{TuominenTweedie1979}.
\begin{definition}[$\lambda_0$-recurrence, $\lambda_0$-positive recurrence and $\lambda_0$-null recurrence]
Let $P = (P_t)_{t\geq0}$ be a sub-Markovian semigroup on
$(S,\mathcal B)$. A number $\lambda_0\in\mathbb R$ is called the
\emph{decay parameter} if there exist a Lebesgue-null set $N$ and a
countable partition $\mathcal X$ of $S$ such that, for every
$x\in N^c$ and every $A\in\mathcal X$,
\[
    \int_0^\infty e^{s t}P_t(x,A)\,\mathrm dt
    \begin{cases}
    <\infty, & s<\lambda_0,\\
    =\infty, & s>\lambda_0.
    \end{cases}
\]
The semigroup $(P_t)$ is said to be \emph{$\lambda_0$-recurrent} if,
in addition,
\[
    \int_0^\infty e^{\lambda_0 t}P_t(x,A)\,\mathrm dt
    =
    \infty
\]
for every $x\in N^c$ and every $A\in\mathcal B^+$, where
$\mathcal B^+$ denotes the collection of Borel sets with positive
Lebesgue measure.

A non-negative
measurable function $f$ and a non-trivial $\sigma$-finite measure $\pi$
are called respectively a \emph{$\lambda_0$-invariant function} and a
\emph{$\lambda_0$-invariant measure} for $P$ if
\[
    f(x)
    =
    e^{\lambda_0 t}P_t f(x),
    \qquad t\geq0,
\]
for Lebesgue-a.e. $x$, and
\[
    \pi(A)
    =
    e^{\lambda_0 t}\pi P_t\Ind_A,
    \qquad t\geq0,\quad A\in\mathcal B.
\]
A $\lambda_0$-recurrent semigroup $P$ is called \emph{$\lambda_0$-positive recurrent} if
$\pi(f) < \infty$,
and \emph{$\lambda_0$-null recurrent} otherwise.
\end{definition}

In the last section,  we established the Lebesgue irreducibility for the semigroup $Q$.
The significance of this arises from Theorem 2 of \cite{TuominenTweedie1979},
which provides a dichotomy: a Lebesgue irreducible semigroup with decay parameter
$\lambda_0$ is either $\lambda_0$-transient or $\lambda_0$-recurrent. 

Our next steps are to establish the recurrence of the semigroup $Q$ and thereby
obtain its invariant measure and, ultimately, its convergence behaviour.

\begin{lemma}\label{lem:rho_recurrent}
The semigroup $Q$ is \(\sigma(b)\)-recurrent. 
\end{lemma}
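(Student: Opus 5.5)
We follow the strategy of \citet[Proposition~3]{Bertoin1997}, adapted to the half-line. The plan has two parts: first show that the decay parameter of $Q$ is exactly $\sigma(b)$, and then show that the resolvent diverges at $q=-\sigma(b)$ for every target set of positive Lebesgue measure. By \cref{lebesgue_irr} and the dichotomy of \citet[Theorem~2]{TuominenTweedie1979}, $Q$ is either $\sigma(b)$-transient or $\sigma(b)$-recurrent, so it suffices to rule out transience. It is also enough to find one suitable small set $A$ and a set of starting points $x$ of full measure.

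\emph{Step 1: the decay parameter is at least $\sigma(b)$.} Fix $s<\sigma(b)$ and set $q=-s$. Under \eqref{condition-Phi} we have $q>\max\{\inf\Psi_X-q_X,-\sigma(b)\}$, so \cref{cor_1}(i) gives
\[
\int_0^\infty e^{st}Q_t(x,A)\,dt=\int_A r_b^{(-s)}(x,y)\,dy<\infty
\]
for bounded $A$. The finiteness comes from the domination in the proof of \cref{cor_1}: $D^{(q)}(b)$ stays bounded away from zero on $[-s,0]$, and the $y$-integrands are dominated as in \cref{lem:Dq_bound}.

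\emph{Step 2: blow-up at $\sigma(b)$.} Take $q\downarrow-\sigma(b)$. The function $q\mapsto D^{(q)}(b)$ is analytic on $\mathcal D$ by \cref{lem:Dq_analytic}, and by \eqref{condition-Phi} the point $-\sigma(b)$ lies in $\mathcal D$. Moreover $D^{(q)}(b)$ is positive on $(-\sigma(b),0]$ and vanishes at $-\sigma(b)$, so $1/D^{(q)}(b)\to+\infty$. For $y>0$, \cref{lem:limit_potential} writes
\[
r_b^{(q)}(x,y)=\frac{e^{-\Phi_X(q+q_X)y}}{D^{(q)}(b)}W_V^{(q)}(x,b)-W_X^{(q+q_X)}(x-y).
\]
The second term stays bounded on compacts as $q\downarrow-\sigma(b)$, while the numerator of the first tends to $e^{-\Phi_X(q_X-\sigma(b))y}W_V^{(-\sigma(b))}(x,b)$. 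The key point is to show $W_V^{(-\sigma(b))}(x,b)>0$ for $x>b$. For this I would use \cref{cor_1}(iii) together with the limit $\sigma(b,a)\to\sigma(b)$, which I expect to hold via the ratio $W_V^{(q)}(a,b)/W_X^{(q+q_X)}(a)\to D^{(q)}(b)$ and the analytic bounds. Granting that, we get $W_V^{(-p)}(x,b)>0$ for $p<\sigma(b)$, and non-negativity in the limit. Strict positivity for a.e.\ $x$ can then be argued as follows: if $W_V^{(-\sigma(b))}(x_0,b)=0$ on a set of $x_0$, then the resolvent $r_b^{(q)}(x_0,y)$ for $y>x_0$ would be bounded as $q\downarrow-\sigma(b)$, which we can compare with other starting points through the irreducibility in \cref{irreducibility}. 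This yields $\int_A r_b^{(q)}(x,y)\,dy\to\infty$ by Fatou or monotone convergence, since the integrand is monotone in $q$, being a Laplace transform. Hence $\int_0^\infty e^{\sigma(b)t}Q_t(x,A)\,dt=\infty$. This also shows the decay parameter is at most $\sigma(b)$, because $\int e^{st}Q_t$ is increasing in $s$.

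\emph{Step 3: conclusion.} Divergence at $\lambda_0=\sigma(b)$ for all $x$ and a set $A\subset(0,\infty)$ of positive measure excludes $\lambda_0$-transience. Transience would require a partition of $S$ into sets with finite $\lambda_0$-resolvent, and by irreducibility plus the Chapman--Kolmogorov argument of Tuominen--Tweedie, divergence for one such set propagates to all sets. The dichotomy then gives $\sigma(b)$-recurrence. The main obstacle is Step 2, specifically strict positivity of $W_V^{(-\sigma(b))}(\cdot,b)$ together with the exchange of limit and integral. I would first try to prove $\lim_{a\to\infty}\sigma(b,a)=\sigma(b)$ via Hurwitz's theorem applied to $W_V^{(q)}(a,b)/W_X^{(q+q_X)}(a)\to D^{(q)}(b)$, which converges locally uniformly in $q$ by the bounds of \cref{lem:Dq_bound}.
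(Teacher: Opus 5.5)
There is a genuine gap, and it sits exactly where you located your main obstacle. The blow-up in Step~2 needs $W_V^{(-\sigma(b))}(x,b)>0$ for at least some starting points, and nothing in your proposal delivers strict positivity anywhere.

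Grant $\lim_{a\to\infty}\sigma(b,a)=\sigma(b)$ and \cref{cor_1}(iii). You still only obtain $\sigma(b,x)\ge\sigma(b)$, hence $W_V^{(-\sigma(b))}(x,b)\ge 0$. That much does not even need Hurwitz: it follows from \cref{cor_1}(i) and Lebesgue irreducibility, as in Case~1 of \cref{sigma-monotonicity}. Your Hurwitz step is in any case unsupported, for two reasons. First, \cref{lem:Dq_bound} controls only real $q$, not the ratio $W_V^{(q)}(a,b)/W_X^{(q+q_X)}(a)$ for complex $q$. Second, locating real zeros near $-\sigma(b)$ would require the multiplicity of the zero of $D^{(\cdot)}(b)$, which is only shown to be one in \cref{lem:positive-recurrence}, after recurrence.

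In the paper, strict positivity on all of $(b,\infty)$ comes from $\sigma(b,x)>\sigma(b)$ in \cref{sigma-monotonicity}. Case~2 of that proof invokes the very lemma you are proving, so that route is circular. Your fallback argues that a zero of $W_V^{(-\sigma(b))}(x_0,b)$ keeps the resolvent from $x_0$ bounded and can be ``compared with other starting points''. This presupposes a point $x_1$ where $W_V^{(-\sigma(b))}(x_1,b)>0$ and divergence already holds. As written, you have not excluded $W_V^{(-\sigma(b))}(\cdot,b)\equiv 0$.

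The missing idea is local and elementary. For $x\in(b,0)$ one has $W_V^{(q)}(x,b)=W_Y^{(q+q_Y)}(x-b)$. The small-argument behaviour of a scale function gives positivity near $b$: use \cite[Lemma~5]{Bertoin1997}, or directly the series bound $|W_Y^{(p)}(\delta)-W_Y(\delta)|\le W_Y(\delta)\bigl(e^{|p|\bar W_Y(\delta)}-1\bigr)$ with $\bar W_Y(\delta)\to0$. Either way, $W_Y^{(q+q_Y)}(x-b)>0$ for all $x\in(b,b+\eta_2)$ and all $q$ in a neighbourhood of $-\sigma(b)$.

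Combine this with joint continuity in $(x,q)$ and $D^{(q)}(b)\downarrow 0$ as $q\downarrow-\sigma(b)$. This yields $r_b^{(q)}(x,y)\to\infty$ for such $x$ and every $y>0$. So the $\sigma(b)$-resolvent diverges from a set of starting points of positive Lebesgue measure. Together with Lebesgue irreducibility, this is all the paper uses to exclude transience. Strict positivity on the whole half-line is neither needed nor available at this stage.

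With this substitution, the rest of your outline is in line with the paper's argument. That covers Step~1 via \cref{cor_1}(i), Fatou/monotone convergence in $q$, propagation between starting points by hitting times, and the Tuominen--Tweedie dichotomy.
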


\begin{proof}
Lemma 4 in \cite{Bertoin1997} states that the mapping
\[
(x,q)\longmapsto W_V^{(q)}(x,b)=W_Y^{(q+q_Y)}(x-b)
\]
is jointly continuous on $\mathbb{C}\times (b,0)$.

Lemma 5 in \cite{Bertoin1997} gives a lower bound for $W_Y$ in terms of its antiderivative $\bar{W}_Y$, and this implies (noting that $\lim_{\eta\downarrow 0}\bar{W}_Y(\eta)=0$) that there exist $\eta_1,\eta_2>0$ such that
\[
W_V^{(-\sigma(b)+\epsilon)}(x,b)
=
W_Y^{(-\sigma(b)+q_Y+\epsilon)}(x-b)
> 0,
\qquad (\epsilon,x)\in (0,\eta_1)\times (b,b+\eta_2).
\]
Using the joint continuity and positivity, together with the convergence $\lim_{q\to\sigma(b)}D^{(-q)}(b)=0$ already established, we obtain
\[
\lim_{q\uparrow \sigma(b)}
\left(
\frac{W_V^{(-q)}(x,b)}{D^{(-q)}(b)} e^{-\Phi_X(-q+q_X)y}
-
W_X^{(-q+q_X)}(x-y)
\right)
= \infty,
\]
for all $x\in (b,b+\eta_2)$ and $y\in (0,\infty)$.

This divergence implies that the $(-\sigma(b))$-resolvent associated with the killed process (see \cref{lem:limit_potential} and \cref{cor_1}) is infinite on every Borel set $B\subseteq (0,\infty)$ with positive Lebesgue measure. This identifies $\sigma(b)$ as the decay parameter of the process.

Finally, $(b,b+\eta_2)$ has positive Lebesgue measure and the process is Lebesgue irreducible. The divergence of the resolvent density rules out transience, and hence the killed process is $\sigma(b)$-recurrent.
\end{proof}

\begin{lemma}\label{strong-stability}
Fix $x\in (b,\infty)$. Suppose that $A$ is a Borel set
with positive Lebesgue measure, such that for all $h>0,$
\[
\lim_{n\to\infty}
e^{\sigma(b)nh}
\mathbb{P}_x\!\left(
V_{nh}\in A,\,
nh<\tau_b^-
\right)
=0.
\]
Then, for every bounded Borel set $L$ of positive Lebesgue measure with $\inf L>b$,
and for every $h'>0$, we also have
\[
\lim_{n\to\infty}
e^{\sigma(b)nh'}
\mathbb{P}_x\!\left(
V_{nh'}\in L,\,
nh'<\tau_b^-
\right)
=0.
\]
\end{lemma}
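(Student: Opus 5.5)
The idea is the standard "solidarity" argument from $R$-theory (compare Tuominen--Tweedie, and the analogous step in \cite{Bertoin1997}): use the uniform minorisation of \cref{unifrom-minorizing-measure} to transfer smallness of the mass in $A$ to smallness of the mass in $L$, via the Chapman--Kolmogorov equation. Write $\rho=\sigma(b)$ and $P^\dag_t(x,B)=\P_x(V_t\in B,\,t<\tau_b^-)$.

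\emph{Step 1: minorisation from $L$ into $A$.} Apply \cref{unifrom-minorizing-measure} with this bounded set $L$ (which satisfies $\inf L>b$) and the set $A$. This gives $t_1$, a function $m^{L,A}$ and a measure $\mu'_A$ with $\mu'_A(A)>0$ such that
\[
  \inf_{z\in L}P^\dag_s(z,A)\ \ge\ m^{L,A}(s)\,\mu'_A(A)
\]
for all $s>t_1$. Fix any compact window $[s_1,s_2]\subset(t_1,\infty)$ with $s_2-s_1\ge h'$. On this window, $c:=\inf_{[s_1,s_2]}m^{L,A}\,\mu'_A(A)>0$.

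\emph{Step 2: Chapman--Kolmogorov.} For $s\in[s_1,s_2]$ and any $t$, the Markov property gives
\[
  P^\dag_{t+s}(x,A)\ \ge\ \int_L P^\dag_t(x,dz)\,P^\dag_s(z,A)\ \ge\ c\,P^\dag_t(x,L).
\]
Multiplying by $e^{\rho(t+s)}$ yields
\[
  e^{\rho t}P^\dag_t(x,L)\ \le\ c^{-1}e^{-\rho s_1}\,e^{\rho(t+s)}P^\dag_{t+s}(x,A),
\]
uniformly for $s\in[s_1,s_2]$. It therefore suffices to find, for each $n$, some $s_n\in[s_1,s_2]$ such that $nh'+s_n$ lies on a skeleton grid $\{kh\}$ along which $e^{\rho kh}P^\dag_{kh}(x,A)\to0$.

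\emph{Step 3: matching grids (main obstacle).} Take $h=h'$ and $s_n\equiv s$ for a fixed $s=jh'\in[s_1,s_2]$ with $j$ an integer. Such a $j$ exists because $s_2-s_1\ge h'$ (we can choose $s_2$ large). Then $nh'+s=(n+j)h'$, and the hypothesis with $h=h'$ gives $e^{\rho(n+j)h'}P^\dag_{(n+j)h'}(x,A)\to0$. Hence $e^{\rho nh'}P^\dag_{nh'}(x,L)\to0$.

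I expect the only delicate point to be making sure that \cref{unifrom-minorizing-measure} applies with a lower bound uniform over $z\in L$ at a single time $s$ that is a multiple of $h'$. This is exactly what the positivity of $\inf_{[t_2,t_3]}m^{L,A}$ provides. If $L$ is not contained in a region where the lemma's reduction (enlarging $L$ to an interval $[d_1,d_2]$) is valid, I would first enlarge $L$: the claim for a larger set implies it for $L$ by monotonicity. The hypothesis holding for all $h$ is used only at $h=h'$, so the argument is robust.
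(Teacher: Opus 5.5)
Your argument is correct. Its core is the same as the paper's: the uniform minorisation of \cref{unifrom-minorizing-measure}, combined with the Markov property, gives $e^{\sigma(b)(t+s)}\P_x(V_{t+s}\in A,\,t+s<\tau_b^-)\ge c\,e^{\sigma(b)s}\,e^{\sigma(b)t}\P_x(V_t\in L,\,t<\tau_b^-)$ for admissible $s$.

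Where you differ is the last step. The paper argues by contradiction. It assumes $\limsup_n e^{\sigma(b)nh}\P_x(V_{nh}\in L,\,nh<\tau_b^-)>0$ for some $h$. It then shows that the renormalised mass in $A$ is bounded below on the unbounded open set $\bigcup_k\bigl(n_kh+t_1(L,A)+(0,h_1)\bigr)$. Finally it invokes Theorem~1 of \cite{Kingman1963Ergodic} to obtain some step $h_2$ whose skeleton meets this set infinitely often, which contradicts the hypothesis at $h_2$.

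You instead choose the minorisation time to be a multiple $jh'$ of the given step. Then $nh'+jh'$ stays on the same grid, and the hypothesis at $h=h'$ applies directly. This is more elementary: there is no contradiction argument and no appeal to Kingman. It also proves a slightly stronger statement. Your conclusion for step $h'$ needs the hypothesis only at $h=h'$, whereas the paper's route genuinely uses it at an uncontrolled $h_2$, which is why it must assume the hypothesis for all $h>0$.

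Two small simplifications are available. First, the window $[s_1,s_2]$ is unnecessary: $m^{L,A}$ takes values in $(0,\infty)$, so any single $s=jh'>t_1$ already gives a positive constant. Second, you need not enlarge $L$ yourself, since \cref{unifrom-minorizing-measure} applies to any bounded $L$ with $\inf L>b$ and performs that reduction in its own proof.
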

\begin{proof}
Suppose, for contradiction, that there exists a bounded Borel set $L$ and $h>0$ such that
\[
\limsup_{n\to\infty}
e^{\sigma(b)nh}
\mathbb{P}_x\!\left(
V_{nh}\in L,\,
nh<\tau_b^-
\right)
=c>0.
\]

Let $t_1(L,A)$ be as in \cref{unifrom-minorizing-measure}, and fix $h_1>0$.
Then, for
$t\in t_1(L,A)+(0,h_1)$, the Markov property gives
\begin{align*}
&e^{\sigma(b)(nh+t)}
\mathbb{P}_x\!\left(
V_{nh+t}\in A,\,
nh+t<\tau_b^-
\right)
\\
&\quad {} =
e^{\sigma(b)(nh+t)}
\int_L
\mathbb{P}_x\!\left(
V_{nh}\in dy,\,
nh<\tau_b^-
\right)
\mathbb{P}_y\!\left(
V_t\in A,\,
t<\tau_b^-
\right)
\\
&\quad {} \ge
e^{\sigma(b)nh}
\mathbb{P}_x\!\left(
V_{nh}\in L,\,
nh<\tau_b^-
\right)
\,
e^{\sigma(b)t}
\inf_{y\in L}
\mathbb{P}_y\!\left(
V_t\in A,\,
t<\tau_b^-
\right).
\end{align*}

By \cref{unifrom-minorizing-measure}, there exists a constant $c_1>0$ such that
\[
e^{\sigma(b)t}
\inf_{y\in L}
\mathbb{P}_y\!\left(
V_t\in A,\,
t<\tau_b^-
\right)
\ge c_1,
\qquad
t\in t_1(L,A)+(0,h_1).
\]
Hence,
\[
e^{\sigma(b)(nh+t)}
\mathbb{P}_x\!\left(
V_{nh+t}\in A,\,
nh+t<\tau_b^-
\right)
\ge
c_1
e^{\sigma(b)nh}
\mathbb{P}_x\!\left(
V_{nh}\in L,\,
nh<\tau_b^-
\right).
\]

Now 
\[
\bigcup_{n\ge0}
\left(
nh+t_1(L,A)+(0,h_1)
\right)
\]
is an unbounded open set in $(0,\infty)$. Since
\[
\limsup_{n\to\infty}
e^{\sigma(b)nh}
\mathbb{P}_x\!\left(
V_{nh}\in L,\,
nh<\tau_b^-
\right)
=c>0,
\]
it follows that for every $\varepsilon\in(0,c)$ there exist infinitely many
$n$ such that
\[
e^{\sigma(b)nh}
\mathbb{P}_x\!\left(
V_{nh}\in L,\,
nh<\tau_b^-
\right)
>
c-\varepsilon.
\]
Let us call that sequence to be $\{n_k\}_{k>0}$ and hence for all $k$ and 
$t\in t_1(L,A)+(0,h_1)$,
\[
e^{\sigma(b)(n_k h+t)}
\mathbb{P}_x\!\left(
V_{n_k h+t}\in A,\,
n_k h+t<\tau_b^-
\right)
\ge
c_1(c-\varepsilon)>0.
\]
Therefore, there exists an unbounded open set of times $s$ along which
\[
e^{\sigma(b)s}
\mathbb{P}_x\!\left(
V_s\in A,\,
s<\tau_b^-
\right)
\]
is bounded away from zero. However, by Theorem~1 of \cite{Kingman1963Ergodic}, there exist $h_2>0$ such that the arithmetic progression
$\{nh_2:n\ge0\}$ intersects such an unbounded open set infinitely often.
Hence,
\[
\limsup_{n\to\infty}
e^{\sigma(b)nh_2}
\mathbb{P}_x\!\left(
V_{nh_2}\in A,\,
nh_2<\tau_b^-
\right)
>0.
\]
This contradicts our assumption on the set $A$. Hence, proved.
\end{proof}
We record here a version of Croft's lemma applied to semicontinuous functions;
see the remark after Corollary~2 in \cite{Kingman1963Ergodic}.
Its proof is a straightforward adaptation of that result and so we omit it.
\begin{lemma}[Croft's lemma]
\label{croft-lemma}
Let $f\ge0$ be lower semicontinuous. If
$
f(nh)\to0  \text{ as } n\to\infty,
$
for all $h>0$,
then
\[
f(t)\to0 \text{ as } t\to\infty.
\]
\end{lemma}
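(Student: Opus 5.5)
Suppose, for contradiction, that $f(t)\not\to 0$. Then there are $\varepsilon>0$ and times $t_k\uparrow\infty$ with $f(t_k)>\varepsilon$. I will show that this creates an unbounded open set on which $f>\varepsilon$, and then use Kingman's arithmetic-progression result, as in the proof of \cref{strong-stability}, to contradict $f(nh)\to0$.

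\textbf{Step 1: an unbounded open superlevel set.} Consider
\[
  G=\{t>0: f(t)>\varepsilon\}.
\]
By lower semicontinuity, every superlevel set $\{f>\varepsilon\}$ is open. Since $G$ contains all the $t_k$ and $t_k\to\infty$, the set $G$ is also unbounded.

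\textbf{Step 2: an arithmetic progression meets $G$ infinitely often.} Here I invoke Theorem~1 of \cite{Kingman1963Ergodic}, the Baire-category form of Croft's theorem, exactly as it is used in \cref{strong-stability}. It gives some $h>0$ such that $nh\in G$ for infinitely many $n$.

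To make this step self-contained, I would argue as follows. Write $G$ as a union of disjoint open intervals, and choose countably many of them, $I_j=(a_j,a_j+\ell_j)$, with $a_j\to\infty$. For each $j$, define the open set
\[
  U_j=\bigcup_{n} \{h : nh\in I_j\}=\bigcup_n \bigl(a_j/n,\,(a_j+\ell_j)/n\bigr).
\]
The aim is to show that, for every $N$, the union $\bigcup_{j\ge N}U_j$ is dense in some fixed interval $[h_0,h_1]$. Baire's theorem then supplies an $h$ lying in infinitely many of the $U_j$.

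Density is the delicate point. For large $a_j$, the intervals $(a_j/n,(a_j+\ell_j)/n)$ indexed by $n$ have length $\ell_j/n$ and spacing about $a_j/n^2$. So they cover a given subinterval well only when $\ell_j$ is not too small compared with $a_j/n$. If the lengths $\ell_j$ shrink fast, this fails.

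The fix is to pick the intervals recursively. Given any open interval $J\subset(0,\infty)$, I choose $t_k$ so large that the dilates $t_k/n$, $n\in\mathbb{N}$, have spacing smaller than $|J|$ near $J$. This forces some $n$ with $t_k/n\in J$. Openness of $G$ around $t_k$ then gives a small open subinterval $J'\subset J$ with $nJ'\subset G$. Iterating produces nested closed intervals, and a point $h$ in their intersection satisfies $n_mh\in G$ for infinitely many distinct $n_m$. This nested-interval construction, rather than a bare appeal to Baire, is how I would handle the main obstacle.

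\textbf{Step 3: conclude.} For this $h$ we have $f(n_m h)>\varepsilon$ along a sequence $n_m\to\infty$. This contradicts the hypothesis $f(nh)\to0$, so $f(t)\to0$ as $t\to\infty$.

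The only genuinely non-trivial step is Step 2. In particular, the recursive choice must make the $n_m$ distinct and increasing. This is ensured by taking each $t_k$ far beyond the previous $n_{m-1}\sup J$, which forces $n_m>n_{m-1}$.
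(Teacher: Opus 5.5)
Your proof is correct and follows the paper's route. The paper omits the proof and defers to \cite{Kingman1963Ergodic}; the intended argument is yours: lower semicontinuity makes $\{f>\varepsilon\}$ an unbounded open set, and Theorem~1 of \cite{Kingman1963Ergodic} gives an $h$ with $nh$ in that set infinitely often, contradicting $f(nh)\to0$. Your nested-interval construction is a valid self-contained proof of that Kingman step (essentially Baire's theorem written out by hand).
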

% \begin{proof}
% Suppose otherwise. Then
% \[
% \limsup_{t\to\infty}f(t)=c>0.
% \]
% Choose $0<\varepsilon<c$ and set
% \[
% G:=\{t>0:f(t)>c-\varepsilon\}.
% \]
% Since $f$ is lower semicontinuous, $G$ is open; since $\limsup_{t\to\infty}f(t)=c$, it is unbounded. By Theorem~1 of \cite{Kingman1963Ergodic}, there exists $h>0$ such that
% \[
% G\cap\{nh:n\in\mathbb N\}
% \]
% is infinite. Hence $f(nh)>c-\varepsilon$ for infinitely many $n$, contradicting $f(nh)\to0$. Therefore $f(t)\to0$ as $t\to\infty$.
% \end{proof}
Echoing \cite{Bertoin1997}, we approach the question of positive recurrence
using a Tauberian theorem; see 
\cite{KyprianouPalmowski2006QuasiStationaryLevy,Pis-exit} for some other works in this
tradition.

\begin{proposition}
\label{lem:positive-recurrence}
The semigroup $Q$
is $\sigma(b)$-positive recurrent, and the function $D^{(q)}(b)$ has a simple root at $q=-\sigma(b)$.
\end{proposition}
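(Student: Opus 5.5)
We follow the Tauberian route of \citet[Proposition~4 and Theorem~1]{Bertoin1997}. Fix $x\in(b,\infty)$ with $W_V^{(-\sigma(b))}(x,b)>0$ (for instance $x\in(b,b+\eta_2)$ as in the proof of \cref{lem:rho_recurrent}), and a bounded Borel set $A\subset(0,\infty)$ of positive Lebesgue measure with $\inf A>0$. Consider the Laplace transform
\[
  F(q) \coloneqq \int_0^\infty e^{-qt}\,\P_x(V_t\in A,\,t<\tau_b^-)\,dt
  = \int_A r_b^{(q)}(x,y)\,dy ,
\]
valid for $q>-\sigma(b)$ by \cref{cor_1}. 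The representation in \cref{lem:limit_potential} shows
\[
  F(q) = \frac{W_V^{(q)}(x,b)}{D^{(q)}(b)}\int_A e^{-\Phi_X(q+q_X)y}\,dy - \int_A W_X^{(q+q_X)}(x-y)\,dy .
\]
The second term and the numerator of the first are analytic near $q=-\sigma(b)$ by \eqref{condition-Phi}, \cref{lem:Dq_analytic} and \cref{prop:WV-analytic}. Moreover, $D^{(\cdot)}(b)$ is analytic on $\mathcal D\ni-\sigma(b)$, and it is positive on $(-\sigma(b),0]$ with a zero at $-\sigma(b)$. Hence $D^{(-\sigma(b)+\epsilon)}(b)\sim c\,\epsilon^k$ for some order $k\ge1$ and some $c>0$.

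\textbf{Step 1: $k=1$.} Argue by contradiction, supposing $k\ge2$. Then $F(-\sigma(b)+\epsilon)\ge C\epsilon^{-2}$ for small $\epsilon$. The function $g(t)=e^{\sigma(b)t}\P_x(V_t\in A,\,t<\tau_b^-)$ satisfies $\int_0^\infty e^{-\epsilon t}g(t)\,dt\sim C\epsilon^{-k}$.

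To obtain a contradiction, we need a bound $\sup_t g(t)<\infty$. We aim to show that $e^{\sigma(b)t}\P_x(V_t\in L,\,t<\tau_b^-)$ is bounded for bounded $L$. Use the minorisation \cref{unifrom-minorizing-measure}: for a fixed window $t\in t_1+(0,h_1)$,
\[
  g(s+t)\ge c_1\,e^{\sigma(b)s}\P_x(V_s\in L,\,s<\tau_b^-).
\]
Averaging over the window then bounds $e^{\sigma(b)s}\P_x(V_s\in L,\,s<\tau_b^-)$ by a moving average of $g$. Combining this with $\sigma(b)$-recurrence and an invariant-function comparison should give boundedness. Concretely, take the $\sigma(b)$-subinvariant function provided by \citet[Theorem~3]{TuominenTweedie1979}, which is positive on the irreducible space and hence bounded below on $L$ by a constant. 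Then $e^{\sigma(b)t}Q_t h\le h$ yields
\[
  e^{\sigma(b)t}\P_x(V_t\in L,\,t<\tau_b^-)\le \frac{h(x)}{\inf_L h}.
\]
Once $g\le M$, the bound $\int_0^\infty e^{-\epsilon t}g\,dt\le M/\epsilon$ contradicts growth of order $\epsilon^{-k}$ with $k\ge2$. Hence $k=1$, and the root is simple.

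\textbf{Step 2: positive recurrence.} With a simple pole, $\epsilon F(-\sigma(b)+\epsilon)\to c'>0$, with
\[
  c' = \frac{W_V^{(-\sigma(b))}(x,b)}{\partial_q D^{(q)}(b)\big|_{q=-\sigma(b)}}\int_A e^{-\Phi_X(q_X-\sigma(b))y}\,dy .
\]
By Karamata's Tauberian theorem (using $g\ge0$), $\frac1T\int_0^T g\to c'$. This Cesàro limit cannot be $0$. Tuominen--Tweedie theory, namely the dichotomy of their Theorem~5 for simultaneously irreducible chains (\cref{sim-leb-irr}), says that in the null-recurrent case $e^{\sigma(b)nh}\P_x(V_{nh}\in A,\,nh<\tau_b^-)\to0$ for every $h$ and every small set. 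By \cref{strong-stability} this transfers to all bounded $L$. Croft's \cref{croft-lemma} then gives $g(t)\to0$, because $g$ is lower semicontinuous by weak Feller continuity on an open $A$; this would force $c'=0$. Since $c'>0$, $Q$ is $\sigma(b)$-positive recurrent.

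\textbf{Main obstacle.} The delicate point is the boundedness of $g$ in Step 1, or alternatively bypassing it. If the subinvariant-function bound is unavailable, I would instead argue as in \citet{Bertoin1997}. In the positive-recurrent case, $g$ converges to a finite limit. In the null-recurrent case, $g\to0$ via \cref{strong-stability,croft-lemma}. Either way $g$ is bounded, so $\epsilon F(-\sigma(b)+\epsilon)$ stays bounded, which forces a simple pole. The null case is then excluded as in Step 2.
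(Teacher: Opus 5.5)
Your proof is correct in substance. Step~2 and the fallback in your last paragraph are essentially the paper's argument; the genuinely different part is Step~1.

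\textbf{How the paper orders things.} It keeps the order $n\ge1$ general. Karamata then gives $\int_0^s g\sim\tilde c\,s^n$ with $\tilde c>0$. This already contradicts the null-recurrent conclusion $g(t)\to0$ obtained from Tweedie's skeleton result, \cref{strong-stability} and \cref{croft-lemma}, so your Step~2 does not actually need $k=1$. Only once positive recurrence is known does the paper read off $n=1$, from the Ces\`aro limit in Tuominen--Tweedie's Theorem~5(i). That is also all your fallback needs. You do not need $g$ itself to converge in the positive-recurrent case: $\int_0^s g=O(s)$ already gives $\int_0^\infty e^{-\epsilon t}g(t)\,dt=O(\epsilon^{-1})$.

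\textbf{What your Step~1 buys.} It isolates the mechanism behind simplicity of the pole, independently of the positive/null dichotomy. A $\sigma(b)$-subinvariant $h$ with $\inf_A h>0$ gives $g\le h(x)/\inf_A h$, hence a Laplace transform of order $\epsilon^{-1}$. Your choice of the abstract function from Tuominen--Tweedie's Theorem~3 is the right one. You could not use $W_V^{(-\sigma(b))}(\cdot,b)$ here: its positivity on $(b,\infty)$ comes from \cref{sigma-monotonicity}, which itself invokes the present proposition.

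\textbf{The gap in Step~1.} As written, ``positive on the irreducible space and hence bounded below on $L$'' does not follow, since pointwise positivity gives no uniform lower bound. The missing step is the uniform minorisation of \cref{unifrom-minorizing-measure}. Choose $\eta>0$ and a set $A'$ of positive Lebesgue measure on which $h\ge\eta$. Then for $y\in A$ and $t>t_1(A,A')$,
$h(y)\ge e^{\sigma(b)t}Q_th(y)\ge e^{\sigma(b)t}m^{A,A'}(t)\,\eta\,\mu'_{A'}(A')>0$,
because $\mu'_{A'}$ is Lebesgue measure on a positive-measure subset of $A'$.

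Tuominen--Tweedie's $h$ is only finite and invariant Lebesgue-a.e. You should therefore choose $x\in(b,b+\eta_2)$ off the exceptional set. You should also run the comparison inside the Laplace transform, where the existence of a resolvent density makes Lebesgue-null sets irrelevant; pointwise in $t$, $Q_t(x,\cdot)$ may have atoms.

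\textbf{Two smaller points.} The preliminary use of the minorisation to bound $g(s+t)$ from below goes in the wrong direction for an upper bound and can be dropped. Lower semicontinuity of $g$ for open $A$ comes from the absence of jumps at fixed times together with Fatou, not from weak Feller continuity.
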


\begin{proof}
By \cref{lem:Dq_analytic}, the function $q\mapsto D^{(q)}(b)$ is analytic in a neighbourhood of $q=-\sigma(b)$. Hence there exist $n\in\mathbb{N}$ (the order of the root) and a constant $C>0$ such that
\[
D^{(-\sigma(b)+\varepsilon)}(b)
\sim
C\varepsilon^n,
\qquad \varepsilon\downarrow0.
\]
Fix $x\in (b, 0)$ and let $B\subseteq (0,\infty)$ be a Borel set with positive Lebesgue measure. As in the proof of \cref{lem:rho_recurrent}, 
there exist constants $\eta_1,\eta_2>0$ such that
\[
W_V^{(-\sigma(b)+\epsilon)}(x,b)
=
W_Y^{(-\sigma(b)+q_Y+\epsilon)}(x-b)
> 0,
\qquad (\epsilon,x)\in (0,\eta_1)\times (b,b+\eta_2).
\]
Using the representation of the resolvent density $r_b^{(q)}$, together with the asymptotic behaviour of $D^{(-\sigma(b)+\varepsilon)}(b)$, we obtain
\[
\int_0^\infty
e^{-\varepsilon t}
e^{\sigma(b)t}
\mathbb{P}_x\!\left(
V_t\in B,\,
t<\tau_b^-
\right)\,dt
\sim
c(B)\varepsilon^{-n},
\qquad \varepsilon\downarrow0,
\]
for some constant $c(B)>0$.

Applying Karamata's Tauberian theorem (see
\cite[Theorem~1.7.1]{BinghamGoldieTeugels1989}), we deduce that
\begin{equation}\label{eq:tauberian-growth}
\int_0^s
e^{\sigma(b)t}
\mathbb{P}_x\!\left(
V_t\in B,\,
t<\tau_b^-
\right)\,dt
\sim
\tilde c(B)s^n,
\qquad s\to\infty,
\end{equation}
where
\[
\tilde c(B)
=
C
W_V^{(-\sigma(b))}(x,b)
\int_{B\cap(0,\infty)}
e^{-\Phi_X(-\sigma(b)+q_X)y}\,dy
>0.
\]

We now show that the killed process is $\sigma(b)$-positive recurrent.

Suppose, for contradiction, that the killed process is $\sigma(b)$-null recurrent. Since the process is simultaneously Lebesgue irreducible (see \cref{sim-leb-irr}), \cite[Theorem~4]{TuominenTweedie1979} implies that for every $h>0$, the $h$-skeleton chain $(Q_{nh})_{n\ge 0}$ is $e^{\sigma(b)h}$-null recurrent, in the discrete sense.
In particular from Theorem $6$ in \cite{tweedie1974}, there exists  $A\subset (b,\infty)$ and a Lebesgue null set $N \subset (b,\infty)$ such that for all $x \in N^c$ and all $h>0$,
\[
\lim_{n \to \infty} e^{\sigma(b) nh}\,
\mathbb{P}_x\!\left(V_{nh} \in A, \, nh < \tau_b^-\right) = 0.
\]
(The fact that $N$ is independent of $h$ follows from Theorem~6 in \cite{tweedie1974}, since the Borel sigma-algebra algebra is countably generated; see also the proof of Theorem~6(ii) in \cite{TuominenTweedie1979}.)
\begin{comment}
Fix $h_1>0$. Then, by \cite[Theorem~6]{NummelinTweedie1978}, there exists a Borel set $A$ with positive Lebesgue measure such that
\[
\lim_{k\to\infty}
e^{\sigma(b)kh_1}
\mathbb{P}_x\!\left(
V_{kh_1}\in A,\,
kh_1<\tau_b^-
\right)
=0,
\]
for all $x\in N_{h_1}^c$.
\end{comment}
By \cref{strong-stability}, this further implies that for \emph{every} bounded Borel set $G$ of positive Lebesgue measure with $\inf G>b$,
\[
\lim_{n\to\infty}
e^{\sigma(b)nh}
\mathbb{P}_x\!\left(
V_{nh}\in G,\,
nh<\tau_b^-
\right)
=0,
\qquad x\in N^c, \, h>0.
\]

Now let $G\subset (0, \infty)$ be a bounded open interval, with $\inf G>b$. We claim that
\[
t\mapsto
e^{\sigma(b)t}
\mathbb{P}_x\!\left(
V_t\in G,\,
t<\tau_b^-
\right)
\]
is lower semicontinuous. Indeed, for any fixed $t>0$, the process $V$ is almost surely continuous at time $t$, since the probability of a jump occurring at a deterministic time is zero; see the remark following Theorem~3.2 in \cite{kyprianou2020scale}. Hence, by the Portmanteau theorem (see Theorem~2.1 in \cite{Billingsley1968}),
\[
\liminf_{s\to t}
\mathbb{P}_x\!\left(
V_s\in G,\,
s<\tau_b^-
\right)
\ge
\mathbb{P}_x\!\left(
V_t\in G,\,
t<\tau_b^-
\right),
\]
which proves the claim.

We may therefore apply the modified Croft lemma in
\cref{croft-lemma} to conclude that
\[
\lim_{t\to\infty}
e^{\sigma(b)t}
\mathbb{P}_x\!\left(
V_t\in G,\,
t<\tau_b^-
\right)
=0.
\]
However, this contradicts \eqref{eq:tauberian-growth}. Hence, the process is $\sigma(b)$-positive recurrent.
Finally, using \eqref{eq:tauberian-growth} and Theorem 5(i) in \cite{TuominenTweedie1979} gives us $n=1$, so $q=-\sigma(b)$ is a simple root.
\end{proof}

Given the history of this problem, beginning with Bertoin's study \cite{Bertoin1997} of Lévy processes
in a compact interval, it is natural to wonder whether the decay parameter
$\sigma(b)$, corresponding to the process in the half-line, can be obtained by
first considering the exit of a compact interval and taking limits.
The following result gives a positive answer, which will also be important
in obtaining an exponential rate of convergence in \cref{s:long-term}.
\begin{proposition}\label{sigma-monotonicity}
    $\sigma(b, x)  >\sigma(b)$ for all $x\in (b,\infty)$,  and
\[
\lim_{x \to \infty} \sigma(b, x) = \sigma(b).
\]
\end{proposition}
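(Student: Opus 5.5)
We need to prove two things: that $\sigma(b,x)>\sigma(b)$ for every $x$, and that $\sigma(b,x)\to\sigma(b)$ as $x\to\infty$. The plan is to compare the decay rates of the semigroup killed outside $(b,x)$ with those of the semigroup $Q$, using the resolvent identities of \cref{exist_resolvent}, \cref{lem:limit_potential} and \cref{cor_1}.

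\emph{Monotonicity and the weak inequality.} For $b<x<x'$, pathwise inclusion of killing events gives
\[
\P_z(V_t\in\cdot,\ t<\tau_x^+\wedge\tau_b^-)\le \P_z(V_t\in\cdot,\ t<\tau_{x'}^+\wedge\tau_b^-)\le Q_t(z,\cdot).
\]
I will identify $\sigma(b,x)$ as the decay parameter of the process killed outside $(b,x)$, in the same way that \cref{lem:rho_recurrent} identifies $\sigma(b)$ for $Q$. Concretely, by \cref{cor_1}(ii) the resolvent $r^{(q)}_{(b,x)}$ is finite for $q>-\sigma(b,x)$, and it blows up as $q\downarrow-\sigma(b,x)$ because the denominator $W_V^{(q)}(x,b)$ vanishes there. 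The domination above then shows that $x\mapsto\sigma(b,x)$ is non-increasing and bounded below by $\sigma(b)$.

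\emph{Strict inequality.} Suppose for contradiction that $\sigma(b,x)=\sigma(b)$ for some $x$. Pick $x'>x$. By the monotonicity just established, $\sigma(b)\le\sigma(b,x')\le\sigma(b,x)=\sigma(b)$, so $\sigma(b,x')=\sigma(b)$ as well. Positive recurrence (\cref{lem:positive-recurrence}) gives a $\sigma(b)$-invariant function $h>0$ for $Q$. Applying the Tuominen--Tweedie dichotomy to the process killed outside $(b,x')$, which is Lebesgue irreducible by the end of the proof of \cref{lebesgue_irr}, should yield a $\sigma(b)$-subinvariant function for $Q$ that vanishes in the sense needed. I will derive the contradiction from the following chain:
\[
e^{\sigma(b)t}Q_t h = h,\qquad h - e^{\sigma(b)t}\,\E_z\bigl[h(V_t);\,t<\tau_{x'}^+\wedge\tau_b^-\bigr]\ \ge\ e^{\sigma(b)t}\,\E_z\bigl[h(V_t);\,\tau_{x'}^+\le t<\tau_b^-\bigr]>0.
\]
The right-hand side is positive by \cref{irreducibility}, since the process can pass above $x'$ before dying. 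This means the killed semigroup at parameter $\sigma(b)$ has a strictly subinvariant function. By Tuominen--Tweedie (Theorem~2 and Theorem~5 of \cite{TuominenTweedie1979}) it is then $\sigma(b)$-transient, which would force $\int_0^\infty e^{\sigma(b)t}P^{(b,x')}_t\,dt<\infty$. But at $q=-\sigma(b,x')=-\sigma(b)$ the resolvent $r^{(q)}_{(b,x')}$ diverges, a contradiction.

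A more concrete alternative argues directly with the resolvent. Since the resolvent of $Q$ at $q=-\sigma(b)+\varepsilon$ blows up like $\varepsilon^{-1}$, through the simple zero of $D^{(q)}(b)$, while the interval resolvent dominates it from below, I would try instead to show that $W_V^{(-\sigma(b))}(x,b)>0$, which gives $\sigma(b,x)\ne\sigma(b)$ directly. To get this, I would use $\sigma(b)$-invariance of $W_V^{(-\sigma(b))}(\cdot,b)$ together with optional stopping at $\tau_x^+\wedge\tau_b^-$, which expresses $W_V^{(-\sigma(b))}(z,b)$ as $W_V^{(-\sigma(b))}(x,b)$ times a nonnegative Laplace transform. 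If $W_V^{(-\sigma(b))}(x,b)$ vanished, this would force $W_V^{(-\sigma(b))}(\cdot,b)$ to vanish on $(b,x)$, contradicting \cref{lem:rho_recurrent}, where positivity near $b$ was used.

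\emph{Limit.} Let $s=\lim_{x\to\infty}\sigma(b,x)\ge\sigma(b)$; the limit exists by monotonicity. Suppose $s>\sigma(b)$ and fix $p\in(\sigma(b),s)$. By \cref{cor_1}(ii) applied with $q=-p>-\sigma(b,a)$, the interval resolvents are valid and finite for every $a$. The identity in the proof of \cref{lem:limit_potential} writes the interval resolvent via the ratios
\[
\frac{W_V^{(q)}(a,y)}{W_X^{(q+q_X)}(a)}\to D^{(q)}(y),\qquad \frac{W_X^{(q+q_X)}(a-y)}{W_X^{(q+q_X)}(a)}\to e^{-\Phi_X(q+q_X)y}.
\]
I need these limits to hold also at $q=-p$; this uses the positivity of $W_V^{(-p)}(\cdot,b)$ on $(b,a)$ from \cref{cor_1}(iii) and the condition \eqref{condition-Phi}. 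With that, monotone convergence in $a$ shows that $\int_0^\infty e^{pt}Q_t(z,B)\,dt<\infty$, since it is the increasing limit of the interval versions. This contradicts the fact, from \cref{lem:rho_recurrent}, that $\sigma(b)$ is the decay parameter and $p>\sigma(b)$. The step I expect to be the main obstacle is making this passage to the limit rigorous at negative $q$: the monotonicity of the ratios relied on probabilistic interpretations that hold only for $q\ge0$. I would handle this by noting that the interval resolvents are genuine expectations, hence increasing in $a$, for any $q>-\sigma(b,a)$, and apply monotone convergence at the level of expectations rather than through the formulas.
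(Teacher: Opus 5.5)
\textbf{Monotonicity and strict inequality.} Your first two steps are a valid alternative to the paper's route. The paper gets monotonicity from \cref{cor_1}(iii) and excludes $\sigma(b,x)\le\sigma(b)$ analytically. If $\sigma(b,x)<\sigma(b)$, the $(-\sigma(b,x))$-resolvent of $Q$ would vanish on $(\max\{x,0\},\infty)$. The case $\sigma(b,x)=\sigma(b)$ is ruled out by comparing the order of the zero of $W_V^{(q)}(x,b)$ at $q=-\sigma(b)$ with the simple zero of $D^{(q)}(b)$. You instead compare decay parameters through $P^{(b,x)}_t\le P^{(b,x')}_t\le Q_t$, which gives monotonicity for all $x<x'$ and the weak inequality at once. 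You then exclude equality because the semigroup killed outside $(b,x')$ is $\sigma(b,x')$-recurrent (its resolvent blows up at $q=-\sigma(b,x')$, and it is irreducible for $x'$ large), so it cannot admit a strictly subinvariant function. This needs no information on the order of the zero of $D^{(\cdot)}(b)$. The relevant fact is Theorem~3 of \cite{TuominenTweedie1979}: for a recurrent semigroup, subinvariant functions are invariant. Two caveats apply. First, $h$ must be the abstract, a.e.\ positive invariant function supplied by Tuominen--Tweedie for $Q$, so positivity of your right-hand side needs the minorisation of \cref{unifrom-minorizing-measure}. It cannot be $W_V^{(-\sigma(b))}(\cdot,b)$, whose positivity and invariance are proved in \cref{lem:rho_invariant_qsd} using the present proposition. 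Second, your ``more concrete alternative'' is therefore circular, and its key step fails anyway. If $W_V^{(-\sigma(b))}(x,b)=0$, then $\sigma(b)=\sigma(b,x)$, and $\E_z[e^{\sigma(b)\tau_x^+};\tau_x^+<\tau_b^-\wedge\zeta]=\infty$ for $z$ near $b$, where $W_V^{(-\sigma(b))}(z,b)=W_Y^{(q_Y-\sigma(b))}(z-b)>0$. So nothing forces vanishing on $(b,x)$.

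\textbf{The limit step has a genuine gap.} Monotone convergence of the interval resolvents, viewed as expectations, only shows that $\int_0^\infty e^{pt}Q_t(z,B)\,dt$ is the increasing limit of the finite numbers $\int_B r^{(-p)}_{(b,a)}(z,y)\,dy$. An increasing limit of finite numbers can be $+\infty$, and for $p>\sigma(b)$ it is, so this yields no contradiction. Finiteness (or the wrong sign) of the limit must come from the pointwise limits of the densities at the negative parameter: $W_X^{(q_X-p)}(a-y)/W_X^{(q_X-p)}(a)\to e^{-\Phi_X(q_X-p)y}$ and $W_V^{(-p)}(a,\cdot)/W_X^{(q_X-p)}(a)\to D^{(-p)}(\cdot)$. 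Positivity of $W_V^{(-p)}(\cdot,b)$ plus \eqref{condition-Phi} does not give these. You also need $D^{(-p)}(b)\neq0$, which is achievable since its zeros are isolated; if $D^{(-p)}(b)<0$, the contradiction is one of sign, not finiteness.

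\textbf{How to close it.} The monotonicity you feared losing is restored by an Esscher transform. For $r=q_X-p>\inf\Psi_X$, $W_X^{(r)}(x)=e^{\Phi_X(r)x}W_{\Phi_X(r)}(x)$, where $W_{\Phi_X(r)}$ is the $0$-scale function under $\P^{\Phi_X(r)}$, under which $X$ drifts to $+\infty$. Hence $W_X^{(r)}(a-v)/W_X^{(r)}(a)\uparrow e^{-\Phi_X(r)v}$ as in \eqref{eq:WX-limit}, and domination in \eqref{dini-type} gives $W_V^{(-p)}(a,b)/W_X^{(q_X-p)}(a)\to D^{(-p)}(b)$. This single limit is what the paper uses, and it then argues without resolvent densities. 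By \cref{lem:positive-recurrence}, $D^{(-p)}(b)<0$ for $p$ slightly above $\sigma(b)$. Yet for $p<\min\{s,q_X-\inf\Psi_X\}$, both $W_V^{(-p)}(a,b)$ and $W_X^{(q_X-p)}(a)$ are positive for all large $a$, which is the contradiction.
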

\begin{proof}
We have already established, in \cref{cor_1}\ref{i:cor_1:3},
that $\sigma(b, x)$ is decreasing in $x,$ which implies that the limit exists;
our task is to identify it with $\sigma(b)$.
Assume (for contradiction) that there exists some $x \in (b,\infty)$ such that $\sigma(b, x) \leq \sigma(b)$.  

\emph{Case 1:} $\sigma(b, x) < \sigma(b)$.  
Take a Borel set $B \subseteq (\max\{x,0\},\infty)$ of positive Lebesgue measure.
Using the definition of $\sigma(b,x)$ and the expression for $r_b^{(-\sigma(b,x))}$
given in \cref{lem:limit_potential} and \cref{cor_1},
the $(-\sigma(b,x))$-potential assigned to $B$ is zero. This contradicts the Lebesgue
irreducibility of $V$ (\cref{lebesgue_irr}).

\emph{Case 2:} $\sigma(b, x) = \sigma(b)$.  
Since $W^{(q)}_{V}(x,b)$ is analytic in $q \in \mathbb{C}$, we may write
\[
W_{V}^{(-\sigma(b, x) + \epsilon)}(x,b) \;\sim\; C\, \epsilon^n, 
\qquad \text{for some } C>0, \; n \in \mathbb{N}, \quad \epsilon \to 0.
\]
If $n>1$, then $W_V^{(q)}(x,b)$ has a higher order zero than $D^{(q)}(b)$ at $q=-\sigma(b)$
(see \cref{lem:positive-recurrence}),
which implies (see \cref{exist_resolvent}) that the $-\sigma(b)$-resolvent is zero on
any Borel subset of $(x,\infty)$, which contradicts the Lebesgue irreducibility
proved in \cref{lebesgue_irr}.

Therefore $n=1$, that is, both $W^{(q)}(x,b)$ and $D^{(q)}(b)$ have a
simple zero at $q=-\sigma(b) = -\sigma(b,x)$.
This implies that the resolvent density $r_b^{(-\sigma(b))}$ is finite everywhere, since the assumption would imply $\lim_{q\to \sigma(b)}W^{(-q)}(x, b)/D^{(-q)}(b)<\infty$.
In particular, the resolvent is finite on some compact set of positive Lebesgue measure.
But this contradicts the $\sigma(b)$-recurrence that we obtained in \cref{lem:rho_recurrent}.

Thus, in all cases, we must have $\sigma(b, x) > \sigma(b)$ for all $x \in (b,\infty)$.

Next, since $\sigma(b, x)$ is nonnegative and
decreasing in $x$ (for sufficiently large $x$; \cref{cor_1}),
and $\sigma(b, x)>\sigma(b)$, the limit
\[
R \coloneqq \lim_{x \to \infty} \sigma(b, x)
\]
exists, and $R \in [\sigma(b),\infty)$. Suppose for contradiction, that $R > \sigma(b)$.
Then, for every $x \in (b,\infty)$ and $q \in (\sigma(b), R)$, we must have
\[
W_{V}^{(-q)}(x,b) > 0.
\]
Moreover, by Theorem~1.5 in \citet{Yamato2023_spectrally_positive_Levy}, we know that
\[
\inf \Psi_X \;=\; -\inf\bigl\{ p \geq 0 : W_X^{(-p)}(x) \le 0 \text{ for some } x \bigr\}.
\]
Therefore, whenever $p < q_X-\inf \Psi_X$, it follows that
\[
W_X^{(-p+q_X)}(x) > 0, \qquad x \in (0,\infty).
\]
On the other hand, using the fact that $-\sigma(b)$ is a simple zero of the analytic function
$D^{(\cdot)}(b)$, there exists $\delta > 0$ such that
\[
D^{(-p)}(b) < 0, \qquad p \in (\sigma(b), \sigma(b)+\delta).
\]
Since \eqref{eq:exit-problem-monotonicity} and  \eqref{eq:WX-limit}, hold true even in the case when $q<0,$ we still have
\[
  D^{(-p)}(b) = \lim_{x \to \infty} \frac{W_{V}^{(-p)}(x,b)}{W_X^{(-p+q_X)}(x)}.
\]
Taking $\sigma(b) < p < \max\{ R, q_X-\inf\Psi_X, \sigma(b)+\delta\}$
yields a contradiction: the numerator and denominator on the
right-hand side are positive, while the left-hand side is negative.
We conclude that, indeed, $\lim_{x \to \infty} \sigma(b, x) = \sigma(b)$.
\end{proof}

\begin{theorem}\label{lem:rho_invariant_qsd}
Define a measure \(\nu\) on \((b,\infty)\) by
\[
\nu(dy)
=
\bigl( D^{(-\sigma(b))}(y)\Ind_{(b,0]}(y)
+ e^{-\Phi_X(-\sigma(b)+q_X)\,y}\Ind_{(0,\infty)}(y)\bigr) dy.
\]
\begin{enumerate}[label=(\roman*)]
  \item $W_{V}^{(-\sigma(b))}(\cdot,b)$ is strictly positive on $(b,\infty)$ and is a $\sigma(b)$-invariant function for $Q$, in the sense that
  \[
  Q_t W_{V}^{(-\sigma(b))}(\cdot,b)(x) = e^{-\sigma(b) t}\, W_{V}^{(-\sigma(b))}(x,b),
  \]
  for Lebesgue-almost every $x\in (b, \infty)$.
  
  \item The measure $\nu$ is $\sigma(b)$-invariant for $Q$:
  \[
  \int_{(b,\infty)} Q_t f(y)\,\nu(dy) = e^{-\sigma(b) t}\int_{(b,\infty)} f(y)\,\nu(dy),
  \qquad t\ge 0,
  \]
  for every bounded measurable $f$.
  \item 
  Up to a multiplicative constant, this is the only pair of $\sigma(b)$-invariant function and measure
for $Q$.
\end{enumerate}
\end{theorem}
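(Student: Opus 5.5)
The plan is to follow the $R$-theory route of \citet{Bertoin1997}. We invoke the representation of the $\sigma(b)$-invariant function and measure as residues of the resolvent at the pole $q=-\sigma(b)$. By \cref{lem:positive-recurrence}, $Q$ is $\sigma(b)$-positive recurrent and $q\mapsto D^{(q)}(b)$ has a simple zero at $-\sigma(b)$. Since $Q$ is Lebesgue irreducible (\cref{lebesgue_irr}), Theorem~5 of \cite{TuominenTweedie1979} gives a $\sigma(b)$-invariant function $h$ and measure $\pi$, unique up to constants. Moreover, for $x$ outside a null set and suitable $B$,
\[
  \lim_{\varepsilon\downarrow 0}\varepsilon\int_0^\infty e^{-\varepsilon t}e^{\sigma(b)t}Q_t\Ind_B(x)\,dt \;=\; c\,h(x)\pi(B).
\]
This immediately gives (iii) once (i) and (ii) are identified, so the work is the identification.

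We compute the left-hand side through the analytic continuation of $r_b^{(q)}$ from \cref{cor_1}, taking $q=-\sigma(b)+\varepsilon$. In the formula of \cref{lem:limit_potential} the terms $W_X^{(q+q_X)}(x-y)$ and $W_V^{(q)}(x,y)$ are bounded near $q=-\sigma(b)$ by \cref{lem:Dq_bound}, so they vanish after multiplying by $\varepsilon$. The pole comes only from $1/D^{(q)}(b)$, and the simple zero gives $\varepsilon/D^{(-\sigma(b)+\varepsilon)}(b)\to 1/\partial_q D^{(q)}(b)|_{q=-\sigma(b)}=:1/C'$, with $C'>0$ because $D^{(0)}(b)>0$ and $\sigma(b)$ is the first zero. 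Hence
\[
  \varepsilon\, r_b^{(-\sigma(b)+\varepsilon)}(x,y)\to \frac{1}{C'}\,W_V^{(-\sigma(b))}(x,b)\Bigl(D^{(-\sigma(b))}(y)\Ind_{(b,0]}(y)+e^{-\Phi_X(q_X-\sigma(b))y}\Ind_{(0,\infty)}(y)\Bigr).
\]
Dominated convergence in $y$ applies, using the bounds from \cref{lem:Dq_bound} and $e^{-\Phi_X(\cdot)y}\le e^{-\epsilon y}$ for $y>0$, which is valid by \eqref{condition-Phi}. The limit therefore factorises, so $h\propto W_V^{(-\sigma(b))}(\cdot,b)$ and $\pi\propto\nu$. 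The same domination shows that $\nu$ has finite mass. It is nonzero because $\nu(0,\infty)>0$: $\Phi_X(q_X-\sigma(b))$ is finite and positive by \eqref{condition-Phi} and \ref{a:R}.

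For positivity in (i), we use \cref{sigma-monotonicity}: $\sigma(b,x)>\sigma(b)$ for every $x$. By the definition of $\sigma(b,x)$ as the first zero of $p\mapsto W_V^{(-p)}(x,b)$, and since $W_V^{(0)}(x,b)>0$, continuity gives $W_V^{(-\sigma(b))}(x,b)>0$ for all $x>b$. The invariance identities hold a.e.\ by Tuominen--Tweedie. To upgrade (ii) to all bounded measurable $f$ and (i) toward every $x$, we use that $\nu$ is absolutely continuous, so a.e.\ statements suffice for (ii). For (i) we argue directly: by the Markov property, $e^{-qt}$-discounted resolvent identities hold for $q\ge0$, extend analytically by \cref{cor_1}, and pass to the residue at every $x$.

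The main obstacle is this last step, together with the justification that the Tuominen--Tweedie residue limit holds for the particular sets $B$ and the chosen $x$, including the nullset issue. I would try first to verify invariance of $W_V^{(-\sigma(b))}(\cdot,b)$ directly. The route would be to show that $e^{\sigma(b)t}W_V^{(-\sigma(b))}(V_t,b)$ killed at $\tau_b^-$ is a martingale, via \cref{two-sided-exit} continued analytically in $q$. This gives the martingale up to $\tau_a^+$, and letting $a\to\infty$ with uniform integrability from the exponential bounds completes the argument.
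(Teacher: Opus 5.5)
Your proposal is correct in outline but orders the identification differently from the paper. Your residue computation is the paper's own: continue $r_b^{(q)}$ to $q=-\sigma(b)+\varepsilon$ via \cref{lem:limit_potential} and \cref{cor_1}, dominate with \cref{lem:Dq_bound}, and use the simple zero of $D^{(\cdot)}(b)$ to get $\varepsilon\int_B r_b^{(q)}(x,y)\,dy\to c_D W_V^{(-\sigma(b))}(x,b)\nu(B)$ with $c_D>0$. Positivity from \cref{sigma-monotonicity} is also the same.

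\textbf{How the paper proceeds.} It proves (i) first and directly. The two-sided exit identity at $q=-\sigma(b)$ (valid since $\sigma(b)<\sigma(b,a)$) and the Markov property at time $t$ give
$e^{\sigma(b)t}\E_x[W_V^{(-\sigma(b))}(V_t,b);\,t<\tau_a^+\wedge\tau_b^-\wedge\zeta]\le W_V^{(-\sigma(b))}(x,b)$.
That is, it simply discards the overshoot term you planned to control by uniform integrability. Monotone convergence in $a$ then gives subinvariance, and $\sigma(b)$-recurrence (\cref{lem:rho_recurrent}) with Theorem~3 of \cite{TuominenTweedie1979} upgrades it to invariance, so your ``main obstacle'' never arises. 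With the invariant function known, the paper checks $\inf_B W_V^{(-\sigma(b))}(\cdot,b)>0$ for bounded $B$ with $\inf B>b$. This is what licenses Theorem~5(i) of \cite{TuominenTweedie1979} on those sets, which settles your ``suitable $B$'' worry. The residue computation is used only to show that the Ces\`aro limit measure equals $c_D\nu$, and recurrence turns subinvariance of $\nu$ into invariance.

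\textbf{Soft spots in your factorisation-first route.} First, you need admissible sets for the Tuominen--Tweedie limit before you know $h$. Second, $h\propto W_V^{(-\sigma(b))}(\cdot,b)$ only Lebesgue-a.e.\ does not transfer invariance to $W_V^{(-\sigma(b))}(\cdot,b)$ itself. The reason is that $Q_t(x,\cdot)$ can have atoms under the paper's hypotheses, e.g.\ $X$ compound Poisson with drift. You therefore need a direct argument for (i) anyway, and once you have one, reading $h$ off the factorisation is redundant.

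\textbf{What your route gains.} Suppose you carry the limit $a\to\infty$ through with equality, i.e.\ you show $W_V^{(-\sigma(b))}(a,b)\,\E_x[e^{\sigma(b)\tau_a^+};\,\tau_a^+\le t,\ \tau_a^+<\tau_b^-\wedge\zeta]\to0$. This is plausible: $W_V^{(-\sigma(b))}(a,b)$ grows at most exponentially, while the absence of upward jumps should make $\P_x(\tau_a^+\le t)$ decay faster than any exponential. Then your martingale argument gives the invariance identity at every $x$ directly, without the subinvariant-implies-invariant step. That is stronger than the almost-everywhere statement the paper proves.
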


\begin{proof}
\begin{itemize}
    \item[(i)]
We appeal to \cref{two-sided-exit}. Fix \(x\in(b,\infty)\) and choose
\(a>x\). Then, using markov property, we have
\begin{align*}
    &\E_x\left[
        e^{\sigma(b)t}
        \frac{W_V^{(-\sigma(b))}(V_t,b)}
             {W_V^{(-\sigma(b))}(a,b)};
        \,
        t<\tau_a^+\wedge\tau_b^-\wedge\zeta
    \right] \\
    &\qquad\leq
    \E_x\left[
        e^{\sigma(b)\tau_a^+};
        \,
        \tau_a^+<\tau_b^-\wedge\zeta
    \right]
    =
    \frac{W_V^{(-\sigma(b))}(x,b)}
         {W_V^{(-\sigma(b))}(a,b)}.
\end{align*}
Consequently,
\begin{align*}
    e^{\sigma(b)t}
    \E_x\left[
        W_V^{(-\sigma(b))}(V_t,b);
        \,
        t<\tau_a^+\wedge\tau_b^-\wedge\zeta
    \right]
    \leq
    W_V^{(-\sigma(b))}(x,b).
\end{align*}

By \cref{sigma-monotonicity},
\[
W_V^{(-\sigma(b))}(y,b)>0,
\qquad y\in(b,\infty).
\]
Therefore, letting \(a\to\infty\) and applying the monotone convergence
theorem, we obtain
\begin{align*}
    e^{\sigma(b)t}
    \E_x\left[
        W_V^{(-\sigma(b))}(V_t,b);
        \,
        t<\tau_b^-\wedge\zeta
    \right]
    \leq
    W_V^{(-\sigma(b))}(x,b).
\end{align*}
Thus, \(W_V^{(-\sigma(b))}(\cdot,b)\) is a
\(\sigma(b)\)-subinvariant function. Since the semigroup
\(Q\) is \(\sigma(b)\)-recurrent by \cref{lem:rho_recurrent},
Theorem~3 of \cite{TuominenTweedie1979} implies that
\(W_V^{(-\sigma(b))}(\cdot,b)\) is, in fact, a
\(\sigma(b)\)-invariant function for \(Q\).

\item[(ii)] We first show that
\[
\inf_{y\in B}W_V^{(-\sigma(b))}(y,b)>0
\]
for every bounded Borel set \(B\subset(b,\infty)\) satisfying
\(\inf B>b\).

Choose \(a>\sup B\). By \cref{two-sided-exit}, for every \(y\in B\),
\begin{align*}
    W_V^{(-\sigma(b))}(y,b)
    &=
    \E_y\left[
        e^{\sigma(b)\tau_a^+};
        \,
        \tau_a^+<\tau_b^-\wedge\zeta
    \right]
    W_V^{(-\sigma(b))}(a,b) \\
    &\geq
    \inf_{z\in B}
    \P_z\left(
        \tau_a^+<\tau_b^-\wedge\zeta
    \right)
    W_V^{(-\sigma(b))}(a,b).
\end{align*}
The right-hand side is strictly positive, as already established.
Moreover, \cref{unifrom-minorizing-measure}  implies that
for any \(\varepsilon>0\) and any sufficiently large $t$,
\begin{align*}
    \inf_{y\in B}
    \P_y\left(
        \tau_a^+<\tau_b^-\wedge\zeta
    \right)
    &\geq
    \inf_{y\in B}
    \P_y\left(
        V_t\in(a,a+\varepsilon);
        \,
        t<\tau_b^-
    \right)
    >0,
\end{align*}
where we have used that $V$ makes new maxima in a continuous way.
Consequently,
\begin{equation}\label{W-strict-set-positivity}
    \inf_{y\in B}W_V^{(-\sigma(b))}(y,b)>0.
\end{equation}
    
By \cref{lem:positive-recurrence}, the semigroup \(Q\) is
\(\sigma(b)\)-positive recurrent. Hence, \eqref{W-strict-set-positivity} along with Theorem~5(i) of
\cite{TuominenTweedie1979} implies that, for every \(x\in N^c\) and
every bounded Borel set \(B\subset(b,\infty)\) with \(\inf B>b\),
\begin{align}\label{tweedi-point-wise-thm}
    \lim_{s\to\infty}
    \frac{1}{s}
    \int_0^s
        e^{\sigma(b)t}
        Q_t\Ind_B(x)
    \,\mathrm{d}t
    =
    W_V^{(-\sigma(b))}(x,b)\nu^*(B),
\end{align}
for some measure \(\nu^*\), which is a \(\sigma(b)\)-subinvariant measure.

We now identify the measure $\nu^*$ explicitly. Fix $x\in(b,\infty)$ and
a bounded Borel set $B\subset(b,\infty)$ with $\inf B>b$. By
\cref{cor_1}\textnormal{(i)}, the resolvent representation in
\cref{lem:limit_potential} remains valid at
$q=-\sigma(b)+\varepsilon$ for any $\varepsilon>0$.
Hence,
\begin{align}
&\varepsilon\int_0^\infty
e^{(\sigma(b)-\varepsilon)t}Q_t\Ind_B(x)\,dt
\nonumber\\
&=
\frac{\varepsilon W_V^{(-\sigma(b)+\varepsilon)}(x,b)}
     {D^{(-\sigma(b)+\varepsilon)}(b)}
\left(
\int_{B\cap(0,\infty)}
e^{-\Phi_X(q_X-\sigma(b)+\varepsilon)y}\,dy
+
\int_{B\cap(b,0]}
D^{(-\sigma(b)+\varepsilon)}(y)\,dy
\right)
\nonumber\\
&\quad-
\varepsilon\int_{B\cap(0,\infty)}
W_X^{(q_X-\sigma(b)+\varepsilon)}(x-y)\,dy
-
\varepsilon\int_{B\cap(b,0]}
W_V^{(-\sigma(b)+\varepsilon)}(x,y)\,dy.
\label{eq:abelian-resolvent-expansion}
\end{align}

We look at the convergence on the right-hand side of \eqref{eq:abelian-resolvent-expansion},
dealing first with the term in parentheses and then with the following line.
Choose $\Gamma$ such that
\begin{equation*}
    \sigma(b)<\Gamma<q_X-\inf\Psi_X.
\end{equation*}
For all sufficiently small $\varepsilon>0$,
$-\sigma(b)+\varepsilon\in[-\Gamma,0)$. Therefore, by
\cref{lem:Dq_bound},
$D^{(-\sigma(b)+\varepsilon)}\rvert_{(b,0)}$ is bounded (uniformly in such $\varepsilon$)
by an integrable function.
% \begin{equation}\label{eq:D-dominating-function}
%     \left|D^{(-\sigma(b)+\varepsilon)}(y)\right|
%     \leq
%     G_Q(y)
%     :=
%     g(Q)W_Y^{(Q+q_Y)}(-y)
%     +
%     A\bigl(Q+q_Y,\Phi_X(\inf\Psi_X),y\bigr),
%     \qquad y\in[b,0].
% \end{equation}
% The function $G_Q$ is integrable on $[b,0]$.
% Indeed,
% $W_Y^{(Q+q_Y)}$ is continuous on $[0,-b]$, while the integrability of
% the second term follows \cref{lem:weighted-to-unweighted}.

We note that $\Phi_X(q_X-\sigma(b)+\epsilon) \downarrow \Phi_X(q_X-\sigma(b)) > 0$
as $\epsilon \downarrow 0$, and \cref{lem:Dq_analytic} gives us the pointwise
convergence $D^{(-\sigma(b)+\varepsilon)}(y) \to D^{(-\sigma(b))}(y)$.
Therefore, we can apply the dominated convergence theorem
to see that the term in parentheses in \eqref{eq:abelian-resolvent-expansion}
converges to $\nu(B)$.

Turning now to the second half of \eqref{eq:abelian-resolvent-expansion},
\cref{lem:Dq_bound} gives us a bound for $W_V^{(-\sigma(b)+\varepsilon)}(x,\cdot)$,
uniformly in $\varepsilon$, by an integrable function.
\cite[Lemma~4(iii)]{Bertoin1997} provides a similar bound for
$W_X^{(q_X-\sigma(b)+\varepsilon)}(x,\cdot)$. Therefore, the term under
consideration vanishes upon taking $\varepsilon$ to zero.

Finally, by \cref{lem:positive-recurrence}, $-\sigma(b)$ is a simple zero of
$q\mapsto D^{(q)}(b)$. Consequently,
\begin{equation*}
    \lim_{\varepsilon\downarrow0}
    \frac{\varepsilon}{D^{(-\sigma(b)+\varepsilon)}(b)}
    =
    c_D,
    \qquad
    c_D
    :=
    \left(
        \left.
        \frac{\partial}{\partial q}D^{(q)}(b)
        \right|_{q=-\sigma(b)}
    \right)^{-1}>0.
\end{equation*}
The analyticity of $q\mapsto W_V^{(q)}(x,b)$ also gives
\begin{equation*}
    W_V^{(-\sigma(b)+\varepsilon)}(x,b)
    \longrightarrow
    W_V^{(-\sigma(b))}(x,b).
\end{equation*}
Taking limits in
\eqref{eq:abelian-resolvent-expansion}, we obtain
\begin{equation}\label{eq:abelian-rank-one-limit}
    \lim_{\varepsilon\downarrow0}
    \varepsilon\int_0^\infty
    e^{(\sigma(b)-\varepsilon)t}Q_t\Ind_B(x)\,dt
    =
    c_DW_V^{(-\sigma(b))}(x,b)\nu(B).
\end{equation}

Hence, comparing \eqref{tweedi-point-wise-thm} and 
\eqref{eq:abelian-rank-one-limit} yields by an Abelian theorem
\begin{equation*}
    \nu^*(B)=c_D\nu(B).
\end{equation*}
This extends to arbitrary Borel sets $B$,
so $\nu^* = c_D \nu$.
It follows that $\nu$ is $\sigma(b)$-subinvariant, and consequently
$\sigma(b)$-invariant by
\cite[Theorem~4(iii)]{TuominenTweedie1979}
and \cref{lem:rho_recurrent}.
\item[(iii)] Once we have proved the previous parts, this is a direct consequence
of Theorem~3 in \cite{TuominenTweedie1979}.
\end{itemize}
\end{proof}

\section{Long-term behavior of the killed process}
\label{s:long-term}

We now want to study the long-term behaviour of $V$ killed upon first passage below zero.
For brevity, we define measures $\P^\dag_x$ under which we send $V$ to a cemetery 
state $\partial$ at the killing time $\tau_\partial \coloneqq \tau_b^-\wedge \zeta$,
so that $Q$ is the semigroup associated with $V$ under $(\P^\dag_x)_{x\in (b,\infty)}$.

Our primary goal is to show that the
$\sigma(b)$-invariant measure $\nu$ of $Q$ found in the previous section is the Yaglom limit
of $Q$, and that the convergence occurs at exponential rate.

Recall here the definition of the weighted total variation norm,
for a signed measure $\nu$ and a positive function $\psi$ on a space $E$:
\[
  \lVert \nu\rVert_{\mathrm{TV}(\psi)}
  = \sup\{ \lvert \nu f\rvert :  f\colon E \to \R\text{ measurable s.t. } \lvert f\rvert \le \psi\}.
\]
For $\psi=1$, we get the usual total variation norm, denoted
$\lVert \,\cdot\, \rVert_{\mathrm{TV}}$.

The measure $\nu$ in \cref{lem:rho_invariant_qsd} is finite, which can be seen from
the fact that $\Phi_X(-\sigma(b)+q_X)>0$ under \ref{a:R}, and \cref{lem:Dq_bound}.
This is sufficient to apply
Theorem~7 in \cite{TuominenTweedie1979}.
For Lebesgue-a.e.\ $x\in(b,\infty)$, we have the convergence
\begin{equation}\label{TV-a.s}
  \lim_{t\rightarrow \infty}
  e^{\sigma(b) t}\,\mathbb{P}_x\big(V_t\in\cdot;\; t<\tau_\partial\big)
  = c_D\nu(\cdot)\,W_{V}^{(-\sigma(b))}(x,b)
\end{equation}
in total variation.

We first strengthen this to hold for \emph{all} \(x \in (b, \infty)\).
\begin{theorem}\label{lem:quasi_stationary_all_x}
For every $x \in (b,\infty)$, we have
\[
\lim_{t \to \infty} 
\Big\|\, e^{\sigma(b) t}\,\mathbb{P}^\dag_x\big(V_t \in \cdot\;;\; t<\tau_\partial \big)
- c_D\nu(\cdot)\, W_V^{(-\sigma(b))}(x,b) \,\Big\|_{\mathrm{TV}} = 0.
\]
\end{theorem}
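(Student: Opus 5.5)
The approach is to upgrade the Lebesgue-a.e.\ convergence \eqref{TV-a.s} to every starting point by running the process for a short time first. The Markov property then lets us average the a.e.\ statement against an absolutely continuous law. Fix $x\in(b,\infty)$ and let $N$ be the Lebesgue-null exceptional set in \eqref{TV-a.s}. For $s>0$ and $t>s$, the Markov property at time $s$ gives
\[
e^{\sigma(b)t}\mathbb{P}^\dag_x(V_t\in\cdot;\,t<\tau_\partial)
=e^{\sigma(b)s}\int \mathbb{P}^\dag_x(V_s\in dy;\,s<\tau_\partial)\,
e^{\sigma(b)(t-s)}\mathbb{P}^\dag_y(V_{t-s}\in\cdot;\,t-s<\tau_\partial).
\]
The key point will be that the law of $V_s$ under $\mathbb{P}^\dag_x$ does not charge $N$. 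This ensures that the integrand converges in total variation for almost every $y$ with respect to this law.

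First I would show that $\mathbb{P}^\dag_x(V_s\in N;\,s<\tau_\partial)=0$ for Lebesgue-a.e.\ $s$. This follows from Fubini and the resolvent: $\int_0^\infty e^{-qs}\mathbb{P}^\dag_x(V_s\in N;s<\tau_\partial)\,ds\le\int_N r_b^{(q)}(x,y)\,dy=0$ for $q\ge0$, by \cref{lem:limit_potential}. So I can choose some $s>0$ with this property.

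Second, I need domination to pass the limit inside the integral. The total variation of the integrand is $e^{\sigma(b)(t-s)}\mathbb{P}^\dag_y(t-s<\tau_\partial)$, plus $c_D\nu(b,\infty)W_V^{(-\sigma(b))}(y,b)$. To bound the first term uniformly in $t$, I would use invariance of $h:=W_V^{(-\sigma(b))}(\cdot,b)$ from \cref{lem:rho_invariant_qsd}. If $h$ is bounded below by $c>0$ on $(b,\infty)$, then $e^{\sigma(b)u}\mathbb{P}_y^\dag(u<\tau_\partial)\le c^{-1}e^{\sigma(b)u}Q_uh(y)=c^{-1}h(y)$. Since $h$ is nondecreasing in its first argument with $h(b+)\ge0$, the lower bound may fail near $b$. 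To handle this, I would instead use the subinvariance inequality in the proof of \cref{lem:rho_invariant_qsd}(i), which holds for every $x$, together with the fact that $h$ is increasing. Then $e^{\sigma(b)u}\mathbb{P}^\dag_y(u<\tau_\partial,\,V_u\ge z)\le h(y)/h(z)$. For the remaining region $V_u\in(b,z)$, I plan to control it by the a.e.\ convergence and a uniform bound near $b$, obtained via a Dini-type monotonicity in the starting point from the exit structure. This is the step I expect to be the main obstacle: a uniform-in-$t$ integrable domination of $e^{\sigma(b)u}\mathbb{P}^\dag_y(u<\tau_\partial)$ against $\mathbb{P}^\dag_x(V_s\in dy)$.

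A cleaner alternative for that step avoids domination altogether, via a Scheffé-type argument. Lower semicontinuity of the total variation norm and Fatou give
\[
\liminf_t \bigl(e^{\sigma(b)t}\mathbb{P}^\dag_x(V_t\in A;\cdot)\bigr)\ge c_D\nu(A)\,e^{\sigma(b)s}Q_sh(x)=c_D\nu(A)h(x)
\]
for every Borel $A$, using the invariance identity at time $s$. Applying this to $A$ and $A^c$, an upper bound on total mass then gives convergence of the masses. That upper bound is $\limsup_t e^{\sigma(b)t}\mathbb{P}^\dag_x(t<\tau_\partial)\le c_D\nu(b,\infty)h(x)$. Once masses converge, the setwise lower bounds upgrade to total variation convergence via Scheffé's lemma, applied to the densities with respect to the sum measure. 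To obtain the mass upper bound, I would use the truncated version: the total mass equals $\lim$ of the part in $[z,\infty)$ plus the part in $(b,z)$. The first is bounded by Fatou-type reversal through the $a\to\infty$ argument in \cref{lem:rho_invariant_qsd}(i). For the second, I would bound $\mathbb{P}^\dag_y$ for $y$ near $b$ by comparison with a starting point $y'\in N^c$ slightly above, reached at positive rate by \cref{unifrom-minorizing-measure}. I would try the Scheffé route first, falling back on explicit domination if the mass upper bound resists.
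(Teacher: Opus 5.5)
Your first step is sound. Since $r^{(q)}_b(x,\cdot)$ is a density, $\P^\dag_x(V_s\in N;\,s<\tau_\partial)=0$ for a.e.\ $s$, so \eqref{TV-a.s} holds $\rho$-a.e.\ for $\rho:=\P^\dag_x(V_s\in\cdot\,;\,s<\tau_\partial)$.

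The Scheffé step is misstated as written: setwise convergence plus convergence of total masses does not give total variation convergence. It can be repaired. Write $h=W_V^{(-\sigma(b))}(\cdot,b)$, $\mu_{y,u}:=e^{\sigma(b)u}\P^\dag_y(V_u\in\cdot\,;\,u<\tau_\partial)$ and $m_y:=c_D\nu\,h(y)$. Then $\mu_{x,t}\wedge m_x\ge e^{\sigma(b)s}\int\rho(dy)\,(\mu_{y,t-s}\wedge m_y)$. This uses $e^{\sigma(b)s}Q_sh(x)=h(x)$ at your particular $x$, whereas \cref{lem:rho_invariant_qsd}(i) records it only for a.e.\ $x$. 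Dominated convergence in $y$ and $\|\mu-m\|_{\mathrm{TV}}=\|\mu\|+\|m\|-2\|\mu\wedge m\|$ then reduce everything to the mass bound
\[
\limsup_t e^{\sigma(b)t}\P^\dag_x(t<\tau_\partial)\le c_D\nu(b,\infty)h(x).
\]

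That bound is the theorem itself for the set $(b,\infty)$, and averaging over $V_s$ is circular for it. Integrating $y\mapsto\mu_{y,t-s}(b,\infty)$ against $e^{\sigma(b)s}\rho$ just returns $\mu_{x,t}(b,\infty)$. So you need an independent uniform-integrability estimate, which is exactly the step you leave open, and your suggested ingredients do not supply it:
\begin{itemize}
\item The subinvariance bound $h(y)/h(z)$ is not sharp and controls only the mass far out.
\item Near $b$, where $h$ can vanish (e.g.\ if $Y$ has unbounded variation), invariance gives nothing.
\item \cref{unifrom-minorizing-measure} needs $\inf L>b$ and yields lower bounds, so ``reaching $y'$ at positive rate'' points in the wrong direction for an upper bound.
\end{itemize}

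The missing idea is to exploit the absence of positive jumps instead of averaging. Take $y>x$ outside the exceptional set and apply the strong Markov property at $\tau^+_y$. Since $V_{\tau^+_y}=y$, the process restarts at a single good point.

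After $\tau^+_y$, dominated convergence applies. \cref{two-sided-exit} continued to $q=-\sigma(b)$ gives $\E_x[e^{\sigma(b)\tau^+_y};\,\tau^+_y<\tau_\partial]=h(x)/h(y)<\infty$, and $\sup_u\|\mu_{y,u}-m_y\|_{\mathrm{TV}}<\infty$ because $\mu_{y,u}$ converges. The same identity turns the limit into $m_x$ with no appeal to invariance at $x$.

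Before $\tau^+_y$, the total mass is $e^{\sigma(b)t}\P^\dag_x(t<\tau_\partial\wedge\tau^+_y)$, and this tends to $0$. The reason is that $\tau_\partial\wedge\tau^+_y$ has exponential moments of every order below $\sigma(b,y)$, and $\sigma(b,y)>\sigma(b)$ by \cref{sigma-monotonicity}. This strict inequality is precisely what your comparison with $y'$ would need for paths that have not yet climbed above a level. Once you have it, the time-$s$ averaging is unnecessary.
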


\begin{proof}
Fix $x\in(b,\infty)$. Choose $y\in(b,\infty)$ with $y>x$ such that \eqref{TV-a.s} holds at $y$ (by a simple application of Lebesgue density theorem). Decompose the measure
\[
\mu_{x,t}(\cdot):=e^{\sigma(b)t}\,\mathbb{P}^\dag_x\big(V_t\in\cdot;\; t<\tau_\partial \zeta\big)
\]
as
\[
\mu_{x,t}=\mu_{x,t}^{(1)}+\mu_{x,t}^{(2)},
\]
where
\[
\mu_{x,t}^{(1)}(\cdot):=e^{\sigma(b)t}\,\mathbb{P}^\dag_x\big(V_t\in\cdot;\; t<\tau_\partial \wedge\tau_y^+\big),
\]
and
\[
\mu_{x,t}^{(2)}(\cdot):=e^{\sigma(b)t}\,\mathbb{P}^\dag_x\big(V_t\in\cdot;\; \tau_y^+<t<\tau_\partial\big).
\]

By \cref{two-sided-exit}, along with a monotonicity of roots argument and \cref{sigma-monotonicity}, we have $\sigma(b, y)>\sigma(b)$ imply that there exists $\varepsilon>0$ with $\sigma(b, y)-\varepsilon>\sigma(b)$ and $\E_x^\dag\big[e^{(\sigma(b, y)-\varepsilon)(\tau_\partial \wedge\tau_y^+)}\big]<\infty.$ Therefore, 
\[
\mathbb{P}^\dag_x\big(t<\tau_\partial \wedge\tau_y^+\big)
=o\;\!\big(e^{-(\sigma(b, y)-\varepsilon)t}\big).
\]
Therefore the total variation norm of $\mu_{x,t}^{(1)}$ satisfies
\[
\|\mu_{x,t}^{(1)}\|_{\mathrm{TV}}
= e^{\sigma(b)t}\,\mathbb{P}_x\big(t<\tau_\partial\wedge\tau_y^+\big)
=o\;\!\big(e^{-(\sigma(y,b)-\varepsilon-\sigma(b))t}\big),
\]
as $t\to\infty$. Thus $\mu_{x,t}^{(1)}\to 0$ in total variation.

By the strong Markov property at time $\tau_y^+$ we may write for any measurable set $B$,
\[
\mu_{x,t}^{(2)}(B)
= \mathbb{E}_x\!\Big[ e^{\sigma(b)\tau_y^+}\,1_{\{\tau_y^+<\tau_\partial \wedge t\}}
\; \mu_{y,t-\tau_y^+}(B)\Big].
\]
We therefore obtain, using \cref{two-sided-exit} in the first line,
\begin{multline*}
  \big\| \mu_{x,t}^{(2)} - c_D W_V^{(-\sigma(b))}(x,b)\nu \big\|_{\mathrm{TV}}
  =
  \Bigl\lVert
    \mu_{x,t}^{(2)}
    - \mathbb{E}_x \big[ e^{\sigma(b)\tau_y^+}\Ind_{\{\tau_y^+<\tau_\partial\}}\big]
    c_D W_V^{(-\sigma(b))}(y,b) \nu
  \Bigr\lVert_{\mathrm{TV}}
  \\
  \le
  \mathbb{E}_x\!\Big[ e^{\sigma(b)\tau_y^+}\,1_{\{\tau_y^+<\tau_\partial \}}
\; \big\| \mu_{y,t-\tau_y^+} - c_D W_V^{(-\sigma(b))}(y,b)\nu \big\|_{\mathrm{TV}}\Big]
\end{multline*}
By assumption we have convergence of $\mu_{y,t}$ as $t\to\infty$.
To apply dominated convergence in the expectation, we again note that 
since $\sigma(b, y)>\sigma(b),$ we have $\mathbb{E}_x\!\Big[ e^{\sigma(b)\tau_y^+}\,1_{\{\tau_y^+<\tau_\partial \}}
\Big]<\infty$.
Therefore, we have
\[
\big\| \mu^{(2)}_{x,t} - c_D \nu(\cdot)\,W_V^{(-\sigma(b))}(x,b)\big\|_{\mathrm{TV}} \longrightarrow 0,
\qquad t\to\infty.
\]
Combining this with the vanishing of $\|\mu_{x,t}^{(1)}\|_{\mathrm{TV}}$, we obtain
\[
\big\| \mu_{x,t} - c_D \nu(\cdot)\,W_V^{(-\sigma(b))}(x,b)\big\|_{\mathrm{TV}} \longrightarrow 0,
\qquad t\to\infty.
\]
This completes the proof.
\end{proof}

We now show that this convergence occurs at exponential rate.
It can be shown, using
\cite[Theorem~1.1]{ChampagnatVillemonais2016QSD}, that this cannot be uniform
in $x$. However, we are able to quantify the dependence on $x$.
\begin{theorem}
\label{lem:combined-conditions}
Let $L=[d_1,d_2]$ for any $d_1 \in (b,0)$ and $d_2>0$. Denote the hitting time
of $L$ by
\[
\tau_L = \inf\{t > 0 : V_t \in L\}.
\]
The following conditions hold:
\begin{enumerate}
    \item 
    \textbf{Local Dobrushin coefficient.}
    There exist $t_1, c_1 > 0$ and a probability measure $\mu$ such that for all $x \in L$,
    \begin{align}\label{dub-2}     
    \mathbb{E}^\dag_x\big(V_{t_1} \in \cdot \; ; \; t_1 < \tau_\partial \big) 
    \ge c_1 \, \mu(\cdot \cap L).
    \end{align}
    \item 
    \textbf{Lyapunov condition.} 
    There exist 
    $\lambda_1>0$
    such that
    \[
      \psi_1(x) = \E_x^\dag\!\left(e^{\lambda_1 (\tau_L\wedge \tau_\partial)} \right) < \infty, 
      \qquad x \in (b,\infty),
    \]
    and $0<\lambda_2<\lambda_1$, $t_2,c_2>0$ such that
    \begin{align}
    \E_x^\dag\!\left(\psi_1(V_{t_2}) \, \mathbf{1}_{\{t_2 < \tau_L \wedge \tau_\partial\}}\right) 
    &\leq e^{-\lambda_1t_2} \, \psi_1(x), 
    &&x \in (b, \infty), \label{eq:lyap1}\\
    \E_x^\dag \!\left(\psi_1(V_t) \, \mathbf{1}_{\{t < \tau_\partial\}}\right) 
    &\leq c_2, 
    && x \in L, \, t \in [0,t_2], \label{eq:lyap2}\\
    \lim_{t\to\infty} e^{\lambda_2 t} \, \E_x^\dag(V_{t} \in L) 
    &= +\infty
    && x \in L. \label{eq:lyap3}
    \end{align}
    \item \textbf{Local Harnack inequality.} There exists $c_3 > 0$ such that
    \[
    \sup_{t \geq 0}
    \frac{\sup_{y \in L} \mathbb{P}_y^\dag(t < \tau_\partial)}
    {\inf_{y \in L} \mathbb{P}_y^\dag(t < \tau_\partial)}
    \le c_3.
    \]
\end{enumerate}
Consequently, there exist
$\alpha \in (0, 1)$
and $C>0$ such that for all probability measures $\mu'$ on $(b, \infty)$ satisfying $\mu'(\psi_1)<\infty$ and $\mu'(\psi_2)>0,$ 
\[
\Big\|
e^{\sigma(b) t}
\mathbb{P}_{\mu'}^\dag\!\big( V_t \in \,\cdot \mathbin{\big|} t < \tau_\partial \big)
- \frac{\nu(\cdot)}{\nu(1)}
\Big\|_{\mathrm{TV}(\psi_1)}
\leq C \alpha^{t} \frac{\mu'(\psi_1)}{\mu'(\psi_2)},
\qquad t>0,
\]
where $\psi_2(x)=\sum_{k=0}^{n_0} e^{\lambda_2 kt_2} \P^\dag_x(V_{kt_2}\in L)$,
for some $n_0$ large enough.
\end{theorem}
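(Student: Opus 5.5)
The strategy is to check the three hypotheses of the general criterion of \citet{ChampagnatVillemonais2023GeneralCriteria}: local Dobrushin, Lyapunov with $\lambda_2<\lambda_1$, and local Harnack. Their main theorem then gives the weighted total variation bound, with $\psi_2$ of the stated form. Since \cref{lem:rho_invariant_qsd} already identifies $\nu$ as the unique $\sigma(b)$-invariant measure, the quasi-stationary limit produced by the criterion must be $\nu/\nu(1)$.

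\textbf{Local Dobrushin coefficient.} Take $A=L$ in \cref{unifrom-minorizing-measure}, applied separately to $L\cap(0,\infty)$ and $L\cap(b,0)$. For any fixed $t_1>t_1(L,L)$ this gives $\inf_{x\in L}\P_x(V_{t_1}\in dy;\,t_1<\tau_b^-)\ge m^{L,L}(t_1)\mu'_L(dy)$. Including the killing $\zeta$ only costs a factor $e^{-\max(q_X,q_Y)t_1}$. Restricting $\mu'_L$ to $L$ and normalising gives \eqref{dub-2}, since $\mu'_L(L)>0$.

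\textbf{Lyapunov condition.} This is the main obstacle.
\begin{itemize}
\item[(a)] \emph{Choice of $\lambda_1$.} By \cref{sigma-monotonicity}, $\sigma(b,d_2)>\sigma(b)$ can be made arbitrarily close to $\sigma(b)$. I would choose $\lambda_2<\lambda_1$ both slightly larger than $\sigma(b)$, and arrange $d_2$ so that $\lambda_1<\sigma(b,d_2)$.
\item[(b)] \emph{Start below $L$.} For $x\in(b,d_1)$, the time $\tau_L\wedge\tau_\partial$ is at most the exit time from $(b,d_1)$, and this is also controlled by $\sigma(b,d_2)$ via \cref{two-sided-exit} and \cref{cor_1}. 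So $\psi_1(x)<\infty$ there.
\item[(c)] \emph{Start above $L$.} For $x>d_2$, the process $X$ must pass below $d_2$ before reaching $L$. Then $\E_x[e^{\lambda_1 T^-_{d_2}}]$ is finite provided $\lambda_1<q_X-\inf\Psi_X$; the killing $q_X$ helps here, and \eqref{condition-Phi} leaves room. Under \ref{a:R} this quantity grows at most exponentially in $x$, so $\psi_1(x)\to\infty$. On the event of an overshoot below $d_1$, one continues from $(b,d_1)$ as in (b).
\item[(d)] \emph{Inequality \eqref{eq:lyap1}.} Apply the Markov property at $t_2$: on $\{t_2<\tau_L\wedge\tau_\partial\}$,
\[
\E^\dag_x\bigl(\psi_1(V_{t_2})\mathbf 1_{\{t_2<\tau_L\wedge\tau_\partial\}}\bigr)
\le e^{-\lambda_1 t_2}\psi_1(x).
\]
This follows directly from the definition of $\psi_1$. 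Strictly one obtains $\tau_L\circ\theta_{t_2}+t_2=\tau_L$, so the inequality is exact. I expect some care with $\tau_L$ defined with $t>0$.
\item[(e)] \emph{Inequality \eqref{eq:lyap2}.} Bound $\psi_1$ on $(b,d_2+K]$ uniformly. For the tail, use that $\sup_{s\le t_2}V_s-x$ has exponential moments under $\P_x$ for $x\in L$, since there are no positive jumps: the supremum of the drift part is controlled by $\psi_X$ near $0$ (indeed, for spectrally negative processes, upward moves are by creeping). This dominates the at-most-exponential growth of $\psi_1$ provided the growth rate is small. If not, I would instead weaken $\psi_1$ by a tilt. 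This step needs the most checking.
\item[(f)] \emph{Inequality \eqref{eq:lyap3}.} Use \cref{lem:quasi_stationary_all_x}: $e^{\sigma(b)t}\P^\dag_x(V_t\in L)\to c_D\nu(L)W_V^{(-\sigma(b))}(x,b)>0$. This uses positivity from \cref{lem:rho_invariant_qsd} and the fact that $\nu$ has positive density. Since $\lambda_2>\sigma(b)$, multiplying by $e^{(\lambda_2-\sigma(b))t}$ gives divergence.
\end{itemize}

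\textbf{Local Harnack inequality.} Use the hitting arguments already used in \cref{unifrom-minorizing-measure} and \eqref{inf-lower-bound-clean}. From any $y\in L$, with probability at least $p>0$ uniformly, $V$ reaches $d_2$ within time $t_L$, by \cref{two-sided-exit} and continuous upward passage. From $d_2$, $V$ reaches $d_1$ within a bounded time with positive probability, by \cref{irreducibility} and \cref{lem:positivity_survival_general}. From $d_1$, $V$ returns to $d_2$ again. Hence for $y,z\in L$,
\[
\P^\dag_z(t<\tau_\partial)\ge p'\inf_{s\le T}\P^\dag_{y}(t-s<\tau_\partial)\ge p'\P^\dag_y(t<\tau_\partial).
\]
Survival is monotone in time, and passing through a fixed reference point lets one compare $y$ and $z$. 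The only delicate point is the time shift, which is handled because survival is monotone in $t$. An alternative is to use \eqref{tweedi-point-wise-thm} together with \cref{lem:quasi_stationary_all_x}, which give $\P^\dag_y(t<\tau_\partial)\sim e^{-\sigma(b)t}c_D\nu(1)W_V^{(-\sigma(b))}(y,b)$. Combined with \eqref{W-strict-set-positivity} and boundedness of $W_V$ on $L$, this handles large $t$, while small $t$ is trivial. With (i)–(iii) verified, the cited theorem yields the conclusion.
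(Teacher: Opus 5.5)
Your proposal is correct and follows essentially the same route as the paper. You verify the Champagnat--Villemonais conditions using \cref{unifrom-minorizing-measure} for the Dobrushin bound; \cref{sigma-monotonicity} and \eqref{condition-Phi} to fit $\sigma(b)<\lambda_2<\lambda_1$ with $\psi_1<\infty$; \cref{lem:quasi_stationary_all_x} for \eqref{eq:lyap3}; and a hitting chain through $d_2$ and $d_1$ for the Harnack bound, where monotonicity of survival in $t$ handles all $t$ at once, a little more cleanly than the paper's split at $3t_L$.

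Where you hesitate in (e), no tilt is needed. \cref{two-sided-exit} and \eqref{eq:WV-limit} give $\P^\dag_x(\tau_a^+\le t_2<\tau_\partial)\le e^{qt_2}W_V^{(q)}(d_2,b)/W_V^{(q)}(a,b)=O(e^{-\Phi_X(q+q_X)a})$ uniformly in $x\in L$ for every $q\ge 0$, so $\sup_{s\le t_2}V_s$ has all exponential moments on survival. Your side claim in (c) that $\psi_1\to\infty$ fails in general, since for $\lambda_1<q_X$ the killing keeps $\psi_1$ bounded on $(d_2,\infty)$, but it is not needed for this statement.
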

\begin{proof}
\textbf{(i) Local Dobrushin coefficient.}
This is a corollary of \cref{unifrom-minorizing-measure}, by choosing $A=L$.

\medskip
\textbf{(ii) Lyapunov condition.}
For $x > b$ and $\lambda > 0$, we decompose:
\begin{align}
\E_x^\dag\!\left(e^{\lambda (\tau_L \wedge \tau_\partial)}\right)
&= \E_x\!\left(e^{\lambda (\tau_L\wedge \zeta)}; \,  \tau_L\wedge \zeta < \tau_0^-\right)
 + \E_x^\dag\!\left(e^{\lambda (\tau_L \wedge \tau_\partial)}; \, \tau_0^- \le \tau_L\wedge \zeta\right).
 \label{eq:lyap-decompose}
\end{align}
For the second term, by the strong Markov property at $\tau_0^-$,
\begin{align}
\E_x^\dag\!\left(e^{\lambda (\tau_L \wedge \tau_\partial)}; \, \tau_0^- \le \tau_L\wedge \zeta \right) 
&= \E_x\!\Big( e^{\lambda \tau_0^-} 
\, \E^\dag_{V_{\tau_0^-}}\!\left(e^{\lambda (\tau_L \wedge \tau_\partial)}\right); \, \tau_0^- \le \tau_L\wedge \zeta\Big) \nonumber \\
&\leq \E_x\!\left(e^{\lambda \tau_0^-}; \, \tau_0^- \le \tau_L\wedge \zeta\right) \, 
\sup_{y \in (b,0)} \E^\dag_y\!\left(e^{\lambda (\tau_L \wedge \tau_\partial)}\right).
\label{eq:lyap-inter2}
\end{align}
For $d_1\le y<0$, $\P_y(\tau_L=0) = 1$.
For $b<y<d_1$, we have for $\delta>0$ that
\begin{equation}
  \label{eq:lyap-decompose-2}
  \E^\dag_y\!\left(e^{(\sigma(b, d_1)-\delta)(\tau_L \wedge \tau_\partial)}\right)
  = \E_y\!\left(e^{(\sigma(b,d_1)-\delta)\tau_{d_1^+} \wedge \tau_b^- \wedge \zeta}\right),
\end{equation}
and by \cref{exist_resolvent} and \cref{cor_1}, when $\delta$ is sufficiently small,
this can be expressed in terms of the resolvent density
$r_{(b,d)}^{(\sigma(b,d_1)-\delta)}(y,z)$, for $b<y,z<d_1$.
We now proceed to show that this is bounded.

By definition, $W_V^{(-\sigma(b, d_1)+\delta)}(d_1,b) > 0$ and hence,  
we have $$|W_V^{(-\sigma(b, d)+\delta)}(y,b)|\leq W_V^{(\sigma(b, d)-\delta)}(y,b) W_V^{(\sigma(b, d)-\delta)}(d,b)<\infty$$
(using \cref{lem:Dq_bound} for the first inequality and \cref{two-sided-exit} for the second).
This is enough to show that $r_{(b,d)}^{(\sigma(b,d)-\delta)}$, and hence,
\eqref{eq:lyap-decompose-2} is bounded in $y$
for any $0 < \lambda \le \sigma(b,d)-\delta$.

Now, for any $a>0$, the process $V$ up to time $\tau_a^-\wedge \zeta$ is just the Lévy process
$X$ with additional killing at rate $q_X$. Using
Corollary~8.8 in \cite{Kyprianou2014}, we have
\begin{equation}
\label{eq:levy-moment}
\E_x\!\left(e^{(q_X+|\inf \psi_X|) \, (\tau_a^- \wedge \zeta)} ; \tau_a^-<\infty \right) < \infty.
\end{equation}
This gives us in particular that \eqref{eq:lyap-inter2} is finite when $\lambda$ is taken
equal to
\[
\lambda_1 \coloneqq \min\big\{ q_X+|\inf \psi_X|, \, \sigma(b, d_1) - \delta \big\}.
\]
It is left to us to show the finiteness of the first term in \eqref{eq:lyap-decompose}.
Recall that $L = [d_1,d_2]$ and that $d_2>0$. So
\begin{align*}
\E_x\!\left(e^{|\inf \psi_X| \, \tau_L \wedge \zeta}\; ; \tau_L<\tau_0^-\right)
&=\E_x\!\left(e^{|\inf \psi_X| \, \tau_{d_2}^-\wedge\zeta}\; ; \tau_{d_2}^-<\tau_0^-\right)
\\
&\leq \E_x\!\left(e^{|\inf \psi_X| \, \tau_{d_2}^-\wedge\zeta}\; ; \tau_{d_2}^-<\infty\right)<\infty,  
\end{align*}
using again \eqref{eq:levy-moment}.
It follows that the first term in \eqref{eq:lyap-decompose} is also finite
at $\lambda=\lambda_1$. Putting together the pieces, we have that
\[
\psi_1(x) = \E_x^\dag\!\left(e^{\lambda_1 (\tau_L \wedge \tau_\partial)}\right) < \infty,
\qquad x \in (b,\infty).
\]
According to Lemma~3.6 in \cite{ChampagnatVillemonais2023GeneralCriteria}, this
establishes \eqref{eq:lyap1} and \eqref{eq:lyap2}.

Recall from Theorem~\ref{lem:quasi_stationary_all_x} that
\[
e^{\sigma(b) t}\, \P_x^\dag(V_{t}\in L; \, t< \tau_\partial) \;\longrightarrow\; C\nu(L)\,W_{V}^{(-\sigma(b))}(x,b).
\]
Thus, we may choose $\lambda_2 = \sigma(b) + \delta$ for any $\delta>0$,
so that $e^{\lambda_2 t}\P_x^\dag(V_t \in L;\; t < \tau_\partial) \to \infty.$  

If we take $\delta$ sufficiently small, then \cref{sigma-monotonicity} ensures 
\[
\sigma(b) + \delta < |\inf \psi_X| +q_X
\quad\text{and}\quad 
\sigma(b) + \delta<\sigma(b, d) - \delta .
\]
Hence $\lambda_2 < \lambda_1$, as required. 
\medskip

\item \textbf{(iii) Local Harnack inequality.}   
We take motivation from \cite{VillemonaisWatson2025GrowthFragmentationQSD}.
It was shown in \eqref{inf-lower-bound-clean} that there exists $t_L > 0$ such that
\[
\inf_{x \in L}\, \mathbb{P}^\dag_x(H_{d_2} < t_L\wedge \tau_\partial) > 0,
\]
We now strengthen this to  
\begin{equation}\label{eqn:minorization-harnack-use}
    \inf_{x, y \in L}\, \mathbb{P}^\dag_x(H_{y} < 3t_L\wedge\tau_\partial) > 0.
\end{equation}
Indeed, by \cref{irreducibility}, we have (possibly making $t_L$ larger) that
\begin{align*}
     \mathbb{P}^\dag_{d_2}(H_{d_1} < t_L\wedge \tau_\partial) > 0.
\end{align*} 
Therefore, for $x,y\in L$,
\begin{align*}
    \mathbb{P}^\dag_x(H_{y} < 3t_L\wedge \tau_\partial)
    &\ge \mathbb{P}^\dag_x(H_{d_2} < t_L\wedge \tau_\partial)
    \mathbb{P}^\dag_{d_2}(H_{d_1} < t_L\wedge \tau_\partial)
    \mathbb{P}^\dag_{d_1}(H_y < t_L\wedge \tau_\partial)\\
    &\ge \mathbb{P}^\dag_x(H_{d_2} < t_L\wedge \tau_\partial)
    \mathbb{P}^\dag_{d_2}(H_{d_1} < t_L\wedge \tau_\partial)
    \mathbb{P}^\dag_{d_1}(H_{d_2} < t_L\wedge \tau_\partial).
\end{align*}
% where the last line follows because
% \begin{align*}
%     \mathbb{P}_{d_1}(H_{d_2} < t_L\wedge \zeta \wedge \tau_b^-)& = \P_{d_1}(H_y<t_L\wedge \zeta \wedge \tau_b^-\; ;\P_y(H_{d_2}<(t_L-k)\wedge \zeta \wedge \tau_b^-)|_{H_y=k})\\
%     &\leq \P_{d_1}(H_y<t_L\wedge \zeta \wedge \tau_b^-).
% \end{align*}
The right-hand side is bounded below in $x$, which gives us \eqref{eqn:minorization-harnack-use}.
Next, for any \(t > 3t_L\), we have
\[
\mathbb{P}^\dag_x(t < \tau_\partial)
  \ge
  \mathbb{E}^\dag_x\!\left( \mathbf{1}_{\{H_y < 3t_L\wedge \tau_\partial\}}
    \, \mathbb{P}^\dag_y(t - 3t_L < \tau_\partial) \right)
  \;\ge\;
  \mathbb{P}^\dag_x(H_y < 3t_L\wedge \tau_\partial) \, \mathbb{P}^\dag_y(t < \tau_\partial).
\]
This implies
\[
  \frac{\mathbb{P}^\dag_y(t < \tau_\partial)}{\mathbb{P}^\dag_x(t < \tau_\partial)}
  \le
  \frac{1}{\mathbb{P}^\dag_x(H_y < 3t_L\wedge \tau_\partial)},
\]
which with \eqref{eqn:minorization-harnack-use} is the required bound for \(t > 3t_L\).

For \(t < 3t_L\), we first choose $I=(d_2-\varepsilon, d_2+\varepsilon)$ such that $d_2-\varepsilon>0$ and define $\tau_{I^c} = \inf\{t\geq 0: V_t \notin I\}.$ Now, we observe that for all $x \in L,$
\[
\mathbb{P}^\dag_x(t < \tau_\partial)
  \ge
  \mathbb{P}^\dag_x(3t_L < \tau_\partial)
  \ge
 \mathbb{P}^\dag_x(H_{d_2}<3t_L \wedge \tau_\partial) \P_{d_2}(\tau_{I^c}>3t_L\wedge \zeta) 
\]
It follows from \eqref{eqn:minorization-harnack-use} and \cref{lem:positivity_survival_general} that the right-hand side is bounded below by a positive number.
Thus
\[
  \frac{\mathbb{P}^\dag_y(t < \tau_\partial)}{\mathbb{P}^\dag_x(t < \tau_\partial)}
  \le
  \frac{1}{\inf_{x \in L} \mathbb{P}^\dag_x(H_{d_2}<3t_L \wedge \tau_\partial) \P_{d_2}(\tau_I>3t_L\wedge \zeta)} < \infty,
\]
and this completes the proof of (iii).

The final conclusion is obtained by Theorem~3.5 in \cite{ChampagnatVillemonais2023GeneralCriteria},
noting the reparametrisation $\gamma_1 = e^{-\lambda_1}$ and $\gamma_2 = e^{-\lambda_2}$.
In addition to the points above, the theorem also demands that
$\tau_L$ is a strong Markov time, in the sense that for all bounded measurable $f$ and $t \geq 0$,
    \begin{align*}
    \mathbb{E}^\dag_x \!\left[ f(V_t)\,\mathbf{1}_{\{\tau_L \leq t < \tau_\partial\}} \right] 
    &= \mathbb{E}^\dag_x \!\left[ \mathbf{1}_{\{\tau_L \leq t \wedge \tau_\partial\}} 
    \,\mathbb{E}^\dag_{V_{\tau_L}} \!\left[ f(V_{t-u})\,\mathbf{1}_{\{t-u < \tau_\partial\}} \right]_{u=\tau_L} \right]. 
    \end{align*}
However, since $L$ is closed, this is a straightforward consequence of the strong Markov
property of $V$.
\end{proof}

\appendix

\section{Properties of the scale function}

\label{appendix:scale_function}
\label{appendix:proof-lemma-excursion}

\begin{proposition}\label{prop:WV-analytic}
For every $b<0$ and $x>b$, the map
\begin{equation*}
    q\longmapsto W_V^{(q)}(x,b)
\end{equation*}
admits an entire extension to $\C$. More precisely,
\begin{equation}\label{eq:WV-power-series}
    W_V^{(q)}(x,b)
    =
    \sum_{n=0}^{\infty}
    q^n W_V^{*(n+1)}(x,b),
    \qquad q\in\C.
\end{equation}
\end{proposition}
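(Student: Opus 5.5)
The plan is to expand every piece of the definition of $W_V^{(q)}(x,b)$ as a power series in $q$ and then sum these termwise. The symbol $W_V^{*(n+1)}(x,b)$ is understood as the coefficient of $q^n$ obtained this way. For this to be legitimate, every series has to converge absolutely, with a bound that is locally uniform in $q$.

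For $x\le 0$ this is classical. We have $W_V^{(q)}(x,b)=W_Y^{(q+q_Y)}(x-b)$, and by \cite[Lemma~8.3]{Kyprianou2014} this equals $\sum_k (q+q_Y)^k W_Y^{*(k+1)}(x-b)$, which is entire in $q$ with $W_Y^{*(k+1)}(z)\le z^k W_Y(z)^{k+1}/k!$. Re-expanding in powers of $q$ gives the form \eqref{eq:WV-power-series}.

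For $x>0$ I split $W_V^{(q)}(x,b)$ into three terms.
\begin{itemize}
\item $T_1=\frac{q+q_X}{\Phi_X(q+q_X)}W_X^{(q+q_X)}(x)W_Y^{(q+q_Y)}(-b)$.
\item $T_2=W_X^{(q+q_X)}(x)W_Y^{(q+q_Y)}(-b)\iint\bigl(e^{-\Phi_X(q+q_X)v}-1\bigr)\tilde\Pi_X(du,dv)$.
\item $T_3=\iint\bigl(W_X^{(q+q_X)}(x)W_Y^{(q+q_Y)}(-b)-W_X^{(q+q_X)}(x-v)W_Y^{(q+q_Y)}(-b+u)\bigr)\tilde\Pi_X(du,dv)$.
\end{itemize}
Each of these should be checked to be finite in isolation. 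Terms $T_1$ and $T_2$ involve $\Phi_X$, which is not entire, so they should be combined. Using $\int_{(-\infty,0)}\int_0^{-z}(e^{-\Phi v}-1)\,dv\,\Pi_X(dz)$ and the Lévy–Khintchine formula, one checks that
\[
\frac{p}{\Phi_X(p)}+\iint(e^{-\Phi_X(p)v}-1)\,\tilde\Pi_X(du,dv)=\gamma_X' \quad\text{(a constant)},
\]
that is, $\psi_X(\Phi)/\Phi$ minus the integral part, which is linear in $\Phi$ divided by $\Phi$. So $T_1+T_2=c\,W_X^{(q+q_X)}(x)W_Y^{(q+q_Y)}(-b)$ for a constant $c$ independent of $q$. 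I expect $c$ to be the drift of $X$, which is finite under \ref{a:W} in the bounded variation case. In the unbounded variation case I would instead keep a compensating piece of $T_3$ together with $T_1+T_2$. Either way, this part is a product of entire functions. This identity is the step I would verify first, since it removes $\Phi_X$ entirely.

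The main obstacle is $T_3$. It is a $\tilde\Pi_X$-integral, over possibly infinite total mass, of an entire integrand. I will follow the Morera scheme of \cref{lem:Dq_analytic}. For $|q|\le Q$, write the integrand as
\[
W_X^{(\cdot)}(x)\bigl[W_Y^{(\cdot)}(-b)-W_Y^{(\cdot)}(-b+u)\bigr]+W_Y^{(\cdot)}(-b+u)\bigl[W_X^{(\cdot)}(x)-W_X^{(\cdot)}(x-v)\bigr].
\]
The power-series comparison \eqref{eq:mono-diff}, for both $X$ and $Y$, bounds the modulus of each bracket by the corresponding difference at the real parameter $Q+q_X$, respectively $Q+q_Y$. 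This dominates the integrand by the nonnegative real integrand at parameter $Q$, which is finite because $W_V^{(Q)}(x,b)<\infty$. Here the nonnegativity argument from \cref{lem:limit_potential} gives finiteness of each part separately. Since the $q^n$ coefficients of all these differences are nonnegative (convolutions of increasing functions are increasing), Fubini–Tonelli lets me interchange the sum over $n$ with the $\tilde\Pi_X$-integral. This yields absolutely convergent coefficients, hence the entire power series \eqref{eq:WV-power-series}. Finally, collecting coefficients from all three terms identifies the coefficient of $q^n$, which I denote $W_V^{*(n+1)}(x,b)$.
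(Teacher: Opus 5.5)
There is a genuine gap in the ``more precisely'' part, which is the real content of the statement. In the paper, and in \cref{lem:Dq_bound} which relies on it, $W_V^{*(n+1)}(x,b)$ is the $(n+1)$-fold kernel convolution of $W_V=W_V^{(0)}$:
\[
W_V^{*(k+1)}(x,b)=\int_b^x W_V^{*k}(x,z)W_V(z,b)\,dz .
\]
This is what makes the coefficients nonnegative and gives $|W_V^{(q)}(x,b)|\le W_V^{(|q|)}(x,b)$. You declare the symbol to mean ``the coefficient of $q^n$ obtained this way'', which turns \eqref{eq:WV-power-series} into a tautology. Nothing in a termwise expansion of the explicit formula produces a convolution structure. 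You cannot even read off the sign of your coefficients: $T_1+T_2$ contributes $c$ times a series with nonnegative coefficients, where $c=\psi_X'(0+)<0$ (see below), while $T_3$ contributes nonnegative ones.

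The missing idea is the resolvent identity
\[
W_V^{(q)}(x,b)-W_V^{(r)}(x,b)=(q-r)\int_b^x W_V^{(r)}(x,z)\,W_V^{(q)}(z,b)\,dz .
\]
The paper derives it by computing $\E_x\bigl[\int_0^{\tau_a^+\wedge\tau_b^-}e^{-rt}W_V^{(q)}(V_t,b)\,dt\bigr]$ in two ways: via the Markov property with \cref{two-sided-exit}, and via the resolvent density of \cref{exist_resolvent}. It then lets $x\downarrow b$ to fix the constant. With $r=0$ this is a Volterra equation. Iterating it, together with $W_V^{*(n+1)}(x,b)\le W_V(x,b)F_x(b)^n/n!$ where $F_x(b)=\int_b^x W_V(x,u)\,du$, gives the series and its infinite radius of convergence at once. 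There is never any need to eliminate $\Phi_X$.

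Even for bare entireness, your split fails under the paper's hypotheses. The $v$-marginal of $\tilde\Pi_X$ is $\Pi_X((-\infty,-v))\,dv$, so $T_2$ is finite only if $\int_{(-\infty,-1)}|z|\,\Pi_X(dz)<\infty$. In that case the constant is $c=\gamma_X+\int_{(-\infty,-1]}z\,\Pi_X(dz)=\psi_X'(0+)$, which is the mean, not the drift. Assumption~\ref{a:R} allows $\psi_X'(0+)=-\infty$. Then $T_2=-\infty$ and $T_3=+\infty$, because the region $v>x$ contributes $W_X^{(q+q_X)}(x)W_Y^{(q+q_Y)}(-b)\int_x^\infty\Pi_X((-\infty,-v))\,dv$. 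So the relevant dichotomy is finite versus infinite mean, not bounded versus unbounded variation.

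This part is repairable. Compensating with $\mathbf 1_{\{v\le 1\}}$ instead of $1$ gives, for $p\ge 0$,
\[
\frac{p}{\Phi_X(p)}+\int_0^\infty\bigl(e^{-\Phi_X(p)v}-\mathbf 1_{\{v\le 1\}}\bigr)\Pi_X((-\infty,-v))\,dv=\gamma_X-\Pi_X((-\infty,-1]).
\]
The remainder is then supported in $\{v\le \max(1,x)\}$, and your dominating real integrand is integrable there. On $\{v\le 1\wedge x\}$, use $e^{-\Phi_X(Q+q_X)v}W_X^{(Q+q_X)}(x)\ge W_X^{(Q+q_X)}(x-v)$ from \eqref{eq:exit-problem-monotonicity} together with finiteness of the integral defining $W_V^{(Q)}(x,b)$; ``$W_V^{(Q)}(x,b)<\infty$'' alone does not suffice, since that integrand carries the factor $e^{-\Phi_X v}$. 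Elsewhere the $\tilde\Pi_X$-measure is finite. This repairs only the entireness claim, not the identification of the coefficients with $W_V^{*(n+1)}(x,b)$.
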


\begin{proof}
Fix $b<x<a$, where $a>\max\{x,0\}$, and let
\begin{equation*}
    T_{b,a}:=\tau_a^+\wedge\tau_b^-.
\end{equation*}
Let $q,r\geq0$ with $q\neq r$ and consider
\begin{equation*}
    I_{q,r}(x)
    :=
    \E_x\left[
        \int_0^{T_{b,a}}
        e^{-rt}W_V^{(q)}(V_t,b)\,\mathrm{d}t
    \right].
\end{equation*}

We calculate this quantity in two ways. By the strong Markov property at
time $t$ and \cref{two-sided-exit},
\begin{align*}
&\E_x\left[
    e^{-q\tau_a^+};
    \ t<\tau_a^+<\tau_b^-\wedge\zeta
\right]\\
&\qquad=
\E_x\left[
    \Ind_{\{t<T_{b,a}\}}e^{-qt}
    \E_{V_t}\left(
        e^{-q\tau_a^+};
        \ \tau_a^+<\tau_b^-\wedge\zeta
    \right)
\right]\\
&\qquad=
\frac{e^{-qt}}{W_V^{(q)}(a,b)}
\E_x\left[
    W_V^{(q)}(V_t,b);
    \ t<T_{b,a}
\right].
\end{align*}
Consequently, 
\begin{align}
I_{q,r}(x)
&=
W_V^{(q)}(a,b)
\E_x\left[
    e^{-q\tau_a^+}
    \int_0^{\tau_a^+}e^{-(r-q)t}\,\mathrm{d}t;
    \ \tau_a^+<\tau_b^-\wedge\zeta
\right]
\nonumber\\
&=
\frac{W_V^{(q)}(a,b)}{r-q}
\E_x\left[
    e^{-q\tau_a^+}-e^{-r\tau_a^+};
    \ \tau_a^+<\tau_b^-\wedge\zeta
\right]
\nonumber\\
&=
\frac{1}{r-q}
\left(
    W_V^{(q)}(x,b)
    -
    \frac{W_V^{(q)}(a,b)}
         {W_V^{(r)}(a,b)}
    W_V^{(r)}(x,b)
\right).
\label{eq:Iqr-exit}
\end{align}

On the other hand, applying \cref{exist_resolvent} with the function
$z\mapsto W_V^{(q)}(z,b)$, we obtain
\begin{align}
I_{q,r}(x)
&=
\int_b^a
W_V^{(q)}(z,b)r_{(b,a)}^{(r)}(x,z)\,\mathrm{d}z
\nonumber\\
&=
\frac{W_V^{(r)}(x,b)}
     {W_V^{(r)}(a,b)}
\int_b^a
W_V^{(r)}(a,z)W_V^{(q)}(z,b)\,\mathrm{d}z
-
\int_b^x
W_V^{(r)}(x,z)W_V^{(q)}(z,b)\,\mathrm{d}z
\nonumber\\
&=
\frac{W_V^{(r)}(x,b)}
     {W_V^{(r)}(a,b)}
\bigl(W_V^{(r)}*W_V^{(q)}\bigr)(a,b)
-
\bigl(W_V^{(r)}*W_V^{(q)}\bigr)(x,b).
\label{eq:Iqr-resolvent}
\end{align}

Equating \eqref{eq:Iqr-exit} and
\eqref{eq:Iqr-resolvent} and rearranging gives
\begin{align}
&\frac{
W_V^{(q)}(x,b)
+
(r-q)\bigl(W_V^{(r)}*W_V^{(q)}\bigr)(x,b)}
{W_V^{(r)}(x,b)}
\nonumber\\
&\qquad=
\frac{
W_V^{(q)}(a,b)
+
(r-q)\bigl(W_V^{(r)}*W_V^{(q)}\bigr)(a,b)}
{W_V^{(r)}(a,b)}.
\label{eq:WV-ratio-independent-x}
\end{align}
Observe that, the expression on the left hand side is independent of
$x\in(b,a)$.

We now let $x\downarrow b$. Since $b<0$, for all $x$ sufficiently close
to $b$, we have
\begin{equation*}
    W_V^{(q)}(x,b)
    =
    W_Y^{(q+q_Y)}(x-b).
\end{equation*}
Next, we first note that
\begin{equation}\label{eq:classical-near-zero-ratio}
    \lim_{x\downarrow b}
    \frac{W_V^{(q)}(x,b)}
         {W_V^{(r)}(x,b)}
    =1.
\end{equation}
Indeed, writing $\delta=x-b$ and assuming, without loss of generality,
that $q\geq r$, we have
\begin{align*}
0
&\leq
1-
\frac{W_Y^{(r+q_Y)}(\delta)}
     {W_Y^{(q+q_Y)}(\delta)}\\
&=
(q-r)
\frac{
\int_0^\delta
W_Y^{(q+q_Y)}(\delta-u)
W_Y^{(r+q_Y)}(u)\,\mathrm{d}u}
{W_Y^{(q+q_Y)}(\delta)}
\leq
(q-r)\int_0^\delta
W_Y^{(r+q_Y)}(u)\,\mathrm{d}u,
\end{align*}
and this tends to zero as $\delta \rightarrow 0$.

Moreover, for $b<x<0$,
\begin{align*}
&\frac{
\bigl(W_V^{(r)}*W_V^{(q)}\bigr)(x,b)}
{W_V^{(r)}(x,b)}
\nonumber\\
&=
\frac{
\int_b^x
W_Y^{(r+q_Y)}(x-z)
W_Y^{(q+q_Y)}(z-b)\,\mathrm{d}z}
{W_Y^{(r+q_Y)}(x-b)}
\nonumber\\
&\leq
\int_b^x
W_Y^{(q+q_Y)}(z-b)\,\mathrm{d}z,
\end{align*}
using the monotonicity of
$W_Y^{(r+q_Y)}$. It is clear that the right-hand side tends to zero as
$x\downarrow b$. It follows from
\eqref{eq:WV-ratio-independent-x} that 
\begin{equation}\label{eq:WV-resolvent-positive}
    W_V^{(q)}(x,b)-W_V^{(r)}(x,b)
    =
    (q-r)
    \bigl(W_V^{(r)}*W_V^{(q)}\bigr)(x,b),
    \qquad q,r\geq0.
\end{equation}
Taking $r=0$ in \eqref{eq:WV-resolvent-positive}, we obtain the Volterra
equation
\begin{equation}\label{eq:WV-Volterra}
    W_V^{(q)}(x,b)
    =
    W_V(x,b)
    +
    q\int_b^x
    W_V(x,z)W_V^{(q)}(z,b)\,\mathrm{d}z,
    \qquad q\geq0.
\end{equation}
We next solve \eqref{eq:WV-Volterra} by iteration. 
Iterating \eqref{eq:WV-Volterra}, for every $n\geq1$, gives
\begin{align}
W_V^{(q)}(x,b)
&=
\sum_{k=0}^{n-1}
q^k W_V^{*(k+1)}(x,b)
+
q^n
\int_b^x
W_V^{*n}(x,z)W_V^{(q)}(z,b)\,\mathrm{d}z.
\label{eq:WV-Volterra-iteration}
\end{align}
Indeed, the assertion follows by induction from
\begin{equation*}
    W_V^{*(k+1)}(x,b)
    =
    \int_b^x
    W_V^{*k}(x,z)W_V(z,b)\,\mathrm{d}z.
\end{equation*}
It remains to show that the final term in
\eqref{eq:WV-Volterra-iteration} converges to zero.
Recall that
\begin{equation*}
    F_x(z)
    :=
    \int_z^x W_V(x,u)\,\mathrm{d}u.
\end{equation*}
The above quantity is well defined by the same argument in \cref{lem:rho_invariant_qsd}. We claim that
\begin{equation}\label{eq:WV-factorial-bound}
    W_V^{*n}(x,y)
    \leq
    \frac{W_V(x,y)}{(n-1)!}
    F_x(y)^{n-1},
    \qquad n\geq1.
\end{equation}
The assertion is immediate for $n=1$. Using \cref{two-sided-exit}, we have
\begin{equation*}
    W_V(z,y)\leq W_V(x,y),
    \qquad y\leq z\leq x.
\end{equation*}
Suppose it holds for some
$n\geq1$. Then,
\begin{align*}
W_V^{*(n+1)}(x,y)
&=
\int_y^x
W_V^{*n}(x,z)W_V(z,y)\,\mathrm{d}z\\
&\leq
\frac{W_V(x,y)}{(n-1)!}
\int_y^x
W_V(x,z)F_x(z)^{n-1}\,\mathrm{d}z.
\end{align*}
Since
\begin{equation*}
    F_x(z)=\int_z^xW_V(x,u)\,\mathrm{d}u
    \quad\text{and}\quad
    \mathrm{d}F_x(z)
    =
    -W_V(x,z)\,\mathrm{d}z,
\end{equation*}
we have
\begin{equation*}
\int_y^x
W_V(x,z)F_x(z)^{n-1}\,\mathrm{d}z
=
\frac{F_x(y)^n}{n}.
\end{equation*}
This proves \eqref{eq:WV-factorial-bound} by induction. Consequently,
\begin{align}
0
&\leq
q^n
\int_b^x
W_V^{*n}(x,z)W_V^{(q)}(z,b)\,\mathrm{d}z
\nonumber\\
&\leq
W_V^{(q)}(x,b)
\frac{\bigl(qF_x(b)\bigr)^n}{n!}
\longrightarrow0,
\qquad n\to\infty.
\label{eq:WV-Volterra-remainder}
\end{align}
Letting $n\to\infty$ in
\eqref{eq:WV-Volterra-iteration}, we conclude that
\begin{equation*}
    W_V^{(q)}(x,b)
    =
    \sum_{k=0}^{\infty}
    q^k W_V^{*(k+1)}(x,b),
    \qquad q\geq0.
\end{equation*}
Thus, for every fixed $x>b$, the series in
\eqref{eq:WV-power-series} converges locally uniformly in $q\in\C$ and
defines an entire extension of
$q\mapsto W_V^{(q)}(x,b)$.
\end{proof}
\paragraph{Acknowledgement}
Ramkrishna Jyoti Samanta acknowledges support from the Additional Funding Programme for Mathematical Sciences, delivered by EPSRC (EP/V521917/1) and the Heilbronn Institute for Mathematical Research.
\bibliography{reference}
\end{document}